\newtheorem{theorem}{Theorem}[section]
\newtheorem{lemma}[theorem]{Lemma}
\newtheorem{remark}[theorem]{Remark}
\newtheorem{example}{Example}[section]
\journal{(Under review)}
\begin{document}
\begin{frontmatter}
\title{A finite difference scheme for two-dimensional singularly perturbed convection-diffusion problem with discontinuous source term}

\author[NITT,Woxen]{Ram Shiromani}\ead{ram.panday.786@gmail.com}
\author[UG]{Niall Madden\corref{cor1}}\ead{Niall.Madden@UniversityOfGalway.ie}
\author[NITT]{V.~Shanthi}\ead{vshanthi@nitt.edu}

\cortext[cor1]{Corresponding author}
\affiliation[NITT]{department={Department of Mathematics,},
    organization={National Institute of Technology Tiruchirappalli}, state={Tamil Nadu}, country={India}}
             
\affiliation[Woxen]{department={School of Sciences,},  organization={Woxsen University},
             addressline={Sadasivpet, Sangareddy District},
             city={Hyderabad},  country={India}}

\affiliation[UG]{department={School of Mathematical and Statistical Sciences,},  organization={University of Galway}, country={Ireland}}

\begin{abstract}
We propose a finite difference scheme for the numerical solution of a two-dimensional singularly perturbed convection-diffusion
partial differential equation whose solution features interacting boundary and interior layers, the latter due to discontinuities in source term.
The problem is posed on the unit square. The second derivative is multiplied by a singular perturbation parameter, $\epsilon$, while 
the nature of the first derivative term is such that flow is aligned with a boundary. 
These two facts mean that solutions tend to exhibit layers of both exponential and characteristic type.
We solve the problem using a finite difference method, specially adapted to the discontinuities, and applied on a  
piecewise-uniform (Shishkin). We prove that that the computed solution converges to the true one at a rate that is independent of the perturbation parameter, and is nearly first-order. We present numerical results that verify
that these results are sharp.
\end{abstract}
	
\begin{keyword}
Shishkin mesh \sep singular perturbed \sep  partial differential equation \sep finite difference scheme \sep discontinuous source term,  
elliptic problem.

\MSC 35J25 \sep 35J40 \sep 35B25 \sep 65N06 \sep 65N12 \sep 65N15

\end{keyword}

\end{frontmatter}	
		
\section{Introduction}\label{sec:intro}
We are interested in the approximation of solutions to singularly perturbed differential equations (SPDEs) 
by finite differences methods (FDMs). The partial differential equation in question is singularly perturbed in the sense that it
features a small parameter scaling the highest derivative; the perturbation is 
``singular'' in the sense that if it is formally set to zero, the problem becomes ill-posed. One consequence of this is that it is difficult to 
obtain a uniformly correct asymptotic expansion.  Moreover, solutions to such problems usually exhibit boundary layers: regions near the boundary of 
the domain where the solution's  gradient is extremely large. The equations we study are of convection-diffusion type, which are both 	
the most commonly studied singular perturbation problems in the literature, and the most challenging.  
If one formally sets $\epsilon=0$, then the characteristics of the reduced equation are parallel to the boundary, and the associated 
layers are of \emph{characteristic} (or \emph{parabolic}) type; otherwise they are of \emph{exponential} (or regular) type.
The problem we study is further complicated by the presence of discontinuities in the source terms, and so layers may feature in the interior of the domain.
  
SPDEs are ubiquitous in mathematical modelling. The problems in this paper are of \emph{convection-diffusion} type; Morton  lists a range of
examples of modules featuring such problems, including pollutant dispersal in rivers and in the atmosphere, 
semi-conductor simulations, and computational finance~\cite{Morton96}.
In all cases, once the model reaches any degree of sophistication, the SPDEs are too complicated to solve exactly, and so computational
methods are required. However classical techniques are highly sub-optimal (or, indeed, can fail entirely) for SPDEs (see, e.g.,~\cite{RST08,Stynes18}), and so this is a very active area of research. The ultimate goal is to devise a computational scheme that can produce approximations which resolve any layers present, and for which one can prove an error bound that is not dependent on the perturbation parameter. Such methods are called	\emph{parameter robust}.
  
The literature on parameter robust methods for SPDEs is extensive (see, e.g.,~\cite{RST08}, and references therein).		
Here, we examine those that are most pertinent to this study. The starting point is often the derivation of bounds on derivatives of solutions. Lin{\ss} and Stynes~\cite{linss2001asymptotic} consider the case where the solution of the convection-diffusion problem features (only) exponential layers. O'Riordan and Shishkin~\cite{o2008parameter} provide  careful analysis of  version of this problem, with variable
coefficients, where characteristic layers can be present (see also \cite{shagi1987asymptotic, roos2002optimal}). In those studies, particular emphasis is on the derivation of solution decomposition, which are key in establishing robust convergence of methods applied on the celebrated, widely-studied \emph{Shishkin} mesh. Such approaches have proved to be broadly applicable: at the time of writing, MathSciNet reports nearly 300 papers that feature Shishkin meshes in the previous 5 years\footnote{Searched on 4 Jan 2024, with search term \texttt{any:(Shishkin mesh) y:[2019 2023]}.}, and Google Scholar about 2,700.


In this study, we present a FDM for a convection-diffusion problem posed on the unit square, and where convection is parallel to a boundary. Consequently, the solution may exhibit both exponential and characteristic boundary layers. Moreover, the source term may be discontinuous across two intersecting interior lines, each parallel to a boundary. The solution tends to have quite intricate structure, with interacting interior layers that are themselves, exponential or characteristic in nature. Such problems are important in applications. For example, \cite{MaMo11}, where a solute transport problem, with a single discontinuity in the forcing term, is solved numerically using a FDM on a Shishkin-type mesh. However, its construction is based purely on heuristics, rather than the theoretical approach given here. (See also \cite{PoMo21} where the same approach is applied  to modelling flow in a catheterized artery).

We formulate the SPDE as 
\begin{subequations}
  \label{eq:full problem}
  \begin{equation}\label{eqncon}
    L_{\epsilon}u = f(x,y), \quad \forall(x,y)\in \Omega, \qquad u = q, \quad \forall(x,y) \in \partial \Omega,
  \end{equation}
  where the differential operator is 
  \begin{equation}
    \label{contoperator}
    L_{\epsilon}u = -\epsilon^2\bigg(\dfrac{\partial^2 u}{\partial x^2}
    + \dfrac{\partial^2 u}{\partial y^2}\bigg)+a(x,y)\dfrac{\partial
      u}{\partial x} + b(x,y) u.
  \end{equation}
\end{subequations}
The perturbation parameter, $\epsilon$, is positive but may be arbitrarily small, so we consider the scenario where  $0 \ll \epsilon <1$. The problem's  domain  is
$\Omega=\cup_{k=1}^{4}\Omega_k$, where the quadrants are defined as
\begin{equation}\label{eq:subdomains}
  \begin{split}
    \Omega_1=(0,d_1)\times(0,d_2), \quad & \quad  \Omega_2=(d_1,1)\times(0,d_2),\\
    \Omega_3=(0,d_1)\times(d_2,1), \quad & \quad \Omega_4=(d_1,1)\times(d_2,1),
  \end{split}
\end{equation}
and $d_1$ and $d_2$ are any point in $(0,1)$, providing they are sufficiently far from the
boundary so as not the be within a boundary layer region; see Remark~\ref{rem:d big enough} for a more detailed discussion.
\cref{fig:Omega} shows these four subregions, as well as further
subregions used later in the analysis. \begin{figure}
  \centering
  \includegraphics[width=12cm]{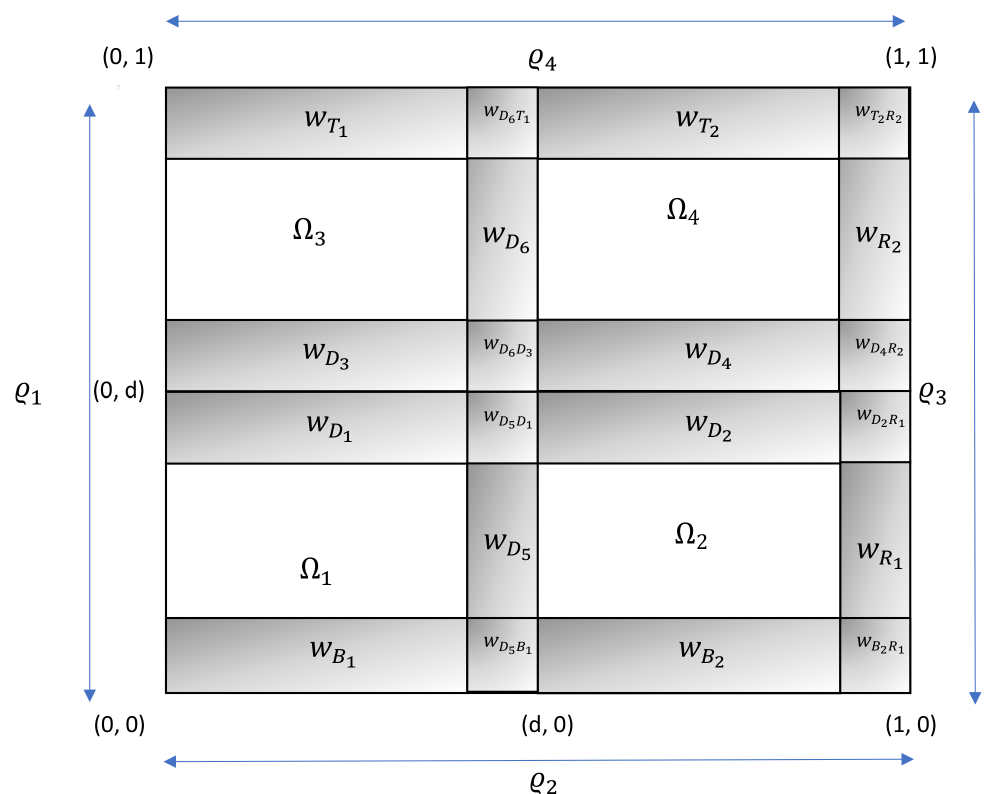}
  \caption{Notation for subregions and boundaries the domains}%
  \label{fig:Omega}
\end{figure}

The convection and reaction coefficients are positive, and bounded below as
\begin{equation}\label{eq:a b bounds}
  a(x,y) \geq \alpha >0,
  \qquad \text{ and } \qquad 
  b(x,y) \geq \beta^2 >0,
\end{equation}
for some constants $\alpha$ and $\beta$.
We shall assume that $b \in C^{4,\zeta}(\bar{\Omega})$ and $q  \in
C^{4,\zeta}({\partial \Omega})$, for some $\zeta \in (0,1]$.
Further, we assume the problem data is such that  $u \in
C^{1,1}(\bar{\Omega}) \cap C^{2,2}({\Omega})$
(see~\cite{han1990differentiability,liu2009two}).

The key distinction in \eqref{eq:full problem}, compared to other papers in the literature, is that 
the source term $f$ has a jump discontinuity at both lines $x=d_1$ and
$y=d_2$. We denote such a jump in a function $v$ at a point $(x,y) \in \Omega$ along the lines parallel to
the $x$- and $y$-axes as $[v](d_1,y)=v({d_1}^+,y) - v({d_1}^-,y)$ and
$[v](x,d_2)=v(x,{d_2}^+) - v(x,{d_2}^-)$ respectively.
		
The FDM that we propose is based on standard upwinding, but specially adjusted at the discontinuities. The scheme is applied on a
specialised piecewise uniform layer-adapted mesh of Shishkin type. At a glance, this is similar to the approach taken in \cite{RaShPr2023}, where a convection-diffusion problem with interior layers (due to discontinues in the forcing and convective coefficients) is also resolved using a FDM on a piecewise uniform mesh. However, the solution to the problem considered there features only exponential layers (since the convection is not aligned with the boundaries), whereas we consider a problem with both characteristic and exponential layers.

The rest of this manuscript is arranged as follows. Section~\ref{sec:a priori} considers the continuous problem: we derive a maximum principle and 
stability result for the operator, and give sharp (with respect to $\epsilon$) 
bounds of the solution and its derivatives.
The FDM is presented in Section~\ref{sec:FDM}, along with a suitable Shishkin mesh. In Section~\ref{sec:analysis}, we prove robust 
(almost) first-order convergence.  In Section~\ref{sec:numerics}, we present the results of numerical experiments which verify  our 
theoretical results.  We comment briefly on interesting avenues for future investigations in Section~\ref{sec:conclude}.
		
\paragraph*{Notation.}
We denote the domain's boundary edges, excluding the points of discontinuity, as  $\varrho=\varrho_1\cup
		\varrho_2\cup\varrho_3\cup\varrho_4$, where
\begin{equation}
\label{eq:boundaries}
\begin{aligned}
  \varrho_{1}&=\big\{(0,y)\,|\, y \in [0, d_2) \cup (d_2, 1] \big\},
 &  
  \varrho_{2}&=\big\{(x,0)\,|\, x \in [0, d_1) \cup (d_1, 1] \big\},\\
  \varrho_{3}&=\big\{(1,y)\,|\, y \in [0, d_2) \cup (d_2, 1] \big\}, 
  & 
  \varrho_{4}&=\big\{(x,1)\,|\, x \in [0, d_1) \cup (d_1, 1] \big\}.
\end{aligned}
\end{equation}
Recalling from \eqref{eqncon} that $u=q$ on the boundary, we denote by $q_i$ the restriction of $q$ onto $\varrho_i$, for $i=1,2,3,4$.
The boundaries of each quadrant, $\Omega_k$, are denoted $\varrho_{k,j}$ where $j=1,2,3,4$ indicate the edges of $\Omega_k$,
labelled anti-clockwise from the west boundary.

\section{The continuous problem}
\subsection{Stability of the differential operator}
\label{sec:a priori}
		
\begin{theorem}(Maximum principle)\label{maximumprinciple}
			Let $L_{\epsilon}$ be the differential operator given in
			(\ref{eqncon}). If $\phi(x,y)\geq 0$  on $\partial \Omega$,\,
			$L_{\epsilon}\phi(x,y)\geq 0$ for all $(x,y)\in \Omega$,
			$[\phi](d_1,y)=[\phi](x,d_2)=0$, 
			$[\frac{\partial \phi}{\partial x}](d_1,y)\leq 0$ and 
			$[\frac{\partial \phi}{\partial y}](x,d_2)\leq 0$. Then $\phi(x,y)\geq 0$
			for all $(x,y)\in \bar{\Omega}$  
		\end{theorem}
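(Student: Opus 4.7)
The plan is to proceed by contradiction: suppose $\phi$ attains a strictly negative minimum $\phi^* := \min_{\bar\Omega}\phi < 0$ at some point $(x^*,y^*)\in\bar\Omega$. Continuity of $\phi$ on $\bar\Omega$ (inherited from smoothness on each $\bar\Omega_k$ together with the conditions $[\phi](d_1,y)=[\phi](x,d_2)=0$) guarantees this minimum is attained, and one then derives a contradiction in each possible location of $(x^*,y^*)$.

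The boundary case is ruled out immediately by the hypothesis $\phi\geq 0$ on $\partial\Omega$. If $(x^*,y^*)$ lies in one of the open quadrants $\Omega_k$, the usual first-order conditions at an interior minimum give $\phi_x=\phi_y=0$ and $\phi_{xx},\phi_{yy}\geq 0$ there, and substitution into \eqref{contoperator} yields
\[
L_\epsilon\phi(x^*,y^*) = -\epsilon^2\bigl(\phi_{xx}+\phi_{yy}\bigr)(x^*,y^*) + b(x^*,y^*)\phi^* \le b(x^*,y^*)\phi^* < 0,
\]
using $b\geq \beta^2>0$ from \eqref{eq:a b bounds}. This contradicts $L_\epsilon\phi\geq 0$ on $\Omega$.

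The main obstacle is ruling out minima on the discontinuity lines. Consider first the case $(x^*,y^*)=(d_1,y^*)$ with $y^*\in(0,d_2)\cup(d_2,1)$. Since $\phi(\cdot,y^*)$ is continuous across $x=d_1$ and has a global minimum there, the one-sided derivatives satisfy $\phi_x(d_1^-,y^*)\le 0$ and $\phi_x(d_1^+,y^*)\ge 0$, hence $[\phi_x](d_1,y^*)\ge 0$. Combined with the assumption $[\phi_x](d_1,y^*)\le 0$, this forces $\phi_x(d_1^-,y^*)=\phi_x(d_1^+,y^*)=0$, and a one-sided Taylor expansion at the minimum then gives $\phi_{xx}(d_1^-,y^*)\ge 0$. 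Because $y^*$ is an interior point of $\bar\Omega_1$ (or $\bar\Omega_3$) in the $y$-variable, the standard interior argument also supplies $\phi_y(d_1^-,y^*)=0$ and $\phi_{yy}(d_1^-,y^*)\ge 0$. Passing to the limit $x\to d_1^-$ in the inequality $L_\epsilon\phi(x,y^*)\ge 0$, using continuity of the coefficients and of the restriction of $\phi$ to $\bar\Omega_1$, delivers $0 \le b(d_1,y^*)\phi^* < 0$, a contradiction. Points on the line $y=d_2$ away from $(d_1,d_2)$ are handled symmetrically, using the jump condition on $\phi_y$ in place of that on $\phi_x$; note that the $a\phi_x$ term is then harmless since $\phi_x=0$ at the interior (in $x$) minimum.

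The remaining case $(x^*,y^*)=(d_1,d_2)$ is the most delicate, as the point lies on both discontinuity lines simultaneously. The same scheme applies however: both jump hypotheses together force all one-sided first derivatives from the four adjoining quadrants to vanish; one-sided Taylor expansions in each variable yield nonnegative one-sided second derivatives on every approaching $\bar\Omega_k$; and passing to the limit of $L_\epsilon\phi\geq 0$ from within any single quadrant produces the same contradiction $0\le b(d_1,d_2)\phi^*<0$. Since every location for $(x^*,y^*)$ leads to a contradiction, the assumption $\phi^*<0$ must fail, proving $\phi\geq 0$ on $\bar\Omega$.
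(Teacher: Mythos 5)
Your proof is correct in substance and follows the same skeleton as the paper's (assume a negative minimum, split into the interior-of-quadrant case and the interface case), but it differs in two genuine ways. First, you dispense with the paper's exponential change of variables $\phi=\omega\psi$ and argue directly with $\phi$, using $b\geq\beta^2>0$ from \eqref{eq:a b bounds} to force $L_\epsilon\phi\leq b\phi^*<0$ at an interior minimum. That transformation is the standard device for operators whose zeroth-order coefficient is not sign-definite; since here $b$ is already uniformly positive, your shortcut is legitimate and cleaner. Second, and more importantly, your treatment of the interface case is actually more complete than the paper's. Minimality only yields $[\phi_x](d_1,y^*)\geq 0$; the paper jumps from $[\omega_x](d_1,y^*)\geq 0$ and $\omega(d_1,y^*)<0$ to the \emph{strict} inequality $[\phi_x](d_1,y^*)>0$, which does not follow and leaves the sub-case $[\phi_x]=0$ unaddressed. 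You instead combine $[\phi_x]\geq 0$ with the hypothesis $[\phi_x]\leq 0$ to conclude both one-sided derivatives vanish, and then close the argument by letting $x\to d_1^-$ in $L_\epsilon\phi\geq 0$; this is exactly the missing branch of the standard argument, and you also treat the corner $(d_1,d_2)$ explicitly, which the paper does not. The one thing you should make explicit is the regularity you lean on: passing to the limit in $L_\epsilon\phi(x,y^*)\geq 0$ and extracting $\phi_{xx}(d_1^-,y^*)\geq 0$, $\phi_{yy}(d_1^-,y^*)\geq 0$ from one-sided Taylor expansions requires the second derivatives of $\phi$ to extend continuously to the interface from within each quadrant. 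The theorem statement gives no regularity on $\phi$ at all (and the paper is no more careful), and the functions to which the result is later applied do have this piecewise smoothness, so this is an acceptable implicit hypothesis rather than a flaw — but it deserves a sentence.
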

\begin{proof}
  Consider the function $\omega$ on $\bar{\Omega}$ defined as
  $\phi(x,y)=\omega(x,y)\psi(x,y)$ where the function 
  \[
    \psi(x,y)=\exp\bigg(\frac{\alpha(x-d_1)}{2\epsilon}+\frac{\beta(y-d_2)}{2\epsilon}\bigg),\quad
    (x,y)\in \bar{\Omega}
  \]
  where $\alpha>0$ and $\beta>0$ are the constants from \eqref{eq:a b bounds}.
  Let $x^\star$ and $y^\star$  be such that
  \[
  \omega(x^\star,y^\star)=\min\limits_{(x,y)\in  \bar{\Omega}}\{\omega(x,y)\}.\] 
  If $\omega(x^\star,y^\star)\geq 0$, we are done, so instead we shall assume that $\omega(x^\star,y^\star)<0$.
  At the point $(x^\star,y^\star)$ we have that   \[
    \dfrac{\partial \omega}{\partial x}{(x^\star,y^\star)}=
    \dfrac{\partial \omega}{\partial y}{(x^\star,y^\star)}=0,
    \quad \text{ and } \quad
    \dfrac{\partial^{2} \omega}{\partial x^{2}}{(x^\star,y^\star)}\geq 0, \quad
    \dfrac{\partial^{2} \omega}{\partial y^{2}}{(x^\star,y^\star)}\geq 0.
  \]
  By the assumption on the boundary values,
  either the point $(x^\star,y^\star)\in \Omega$ or $(x^\star,y^\star) \in  (d_1,y) \cup (x,d_2)$.
We consider the two cases separately.
			
\begin{description}
\item [Case(i):]   $(x^\star,y^\star)\in \Omega$.
  By our assumption that $\omega(x^\star,y^\star) < 0$, we have
  \[
    L_{\epsilon}\phi(x^\star,y^\star) =
    \bigg(-\epsilon^{2}\bigg(\frac{\partial^2\phi}{\partial
      x^2}+\frac{\partial^2\phi}{\partial
      y^2}\bigg)+a(x,y)\frac{\partial\phi}{\partial
      x}+b(x,y)\phi\bigg){(x^\star,y^\star)} < 0,
  \]
  which contradicts the hypothesis.
  
\item[Case(ii)] $(x^\star,y^\star)\in \Omega$ or $(x^\star,y^\star) \in \big((d_1,y) \cup (x,d_2)\big)$.
Here, either $(x^\star,y^\star) = (d_1,y^\star),$ or $(x^\star,y^\star) = (x^\star,d_2)$.  Let us assume $(x^\star,y^\star) = (d_1,y^\star)$. 
Then $\omega$ taking its minimum value at $(x^\star,y^\star)$ implies that 
$\frac{\partial \omega}{\partial x}({d_1}^+,y^\star) \geq 0$ and   
$\frac{\partial \omega}{\partial x}({d_1}^-,y^\star) \leq 0$. 
Then, it is evident that $[\frac{\partial \omega}{\partial x}]({d_1},y^\star) \geq 0$. As $\omega(d_1,y^\star)<0,$ it follows that
\[
\bigg[\frac{\partial \phi}{\partial x}\bigg](d_1,y^\star)=\exp\bigg(\frac{\beta(y^\star-d_2)}{2\epsilon}\bigg)\bigg(\bigg[\frac{\partial \omega}{\partial x}\bigg](d_1,y^\star)\bigg)>0,
\]
contradicting the hypothesis 
$[\frac{\partial \phi }{\partial x}](x,y)\leq 0 \quad \forall (x,y)\in (d_1,y)$. 
Similar reasoning applies when $(x^\star,y^\star) = (x^\star,d_2)$.
\end{description}
This completes the proof.
\end{proof}
		
\begin{lemma}[Stability]\label{stabilityresult}
			Let $u(x,y)$ be the solution of (\ref{eqncon}). Then 
			Theorem~\ref{maximumprinciple} holds, and yields  the stability estimate 
			\begin{equation}\label{eq:stab bound}
				||u(x,y)||_{\bar{\Omega}} \leq  \dfrac{1}{\alpha} ||f||_{\Omega}+\max \bigg\{ ||
				u ||_{\varrho}\bigg\}.
			\end{equation}
		\end{lemma}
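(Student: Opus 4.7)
The plan is a standard barrier-function argument that appeals to \cref{maximumprinciple}. First I would introduce the pair of comparison functions
\[
\phi^\pm(x,y) \;=\; \max\big\{\|u\|_\varrho\big\} + \frac{x}{\alpha}\|f\|_\Omega \pm u(x,y),
\]
so that once $\phi^\pm \geq 0$ on $\bar\Omega$ is established, using $x \leq 1$ in the resulting inequality for $\mp u$ reproduces exactly the bound \eqref{eq:stab bound}.

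Next I would verify, in turn, the hypotheses of \cref{maximumprinciple} applied to $\phi^\pm$. Non-negativity on $\partial\Omega$ is immediate from $\max\{\|u\|_\varrho\}\geq |u|$ together with $x/\alpha\geq 0$. For the differential inequality, writing $B(x):=\max\{\|u\|_\varrho\} + (x/\alpha)\|f\|_\Omega$, a direct computation from \eqref{contoperator} gives
\[
L_\epsilon \phi^\pm \;=\; L_\epsilon B \pm f \;=\; \frac{a(x,y)}{\alpha}\|f\|_\Omega + b(x,y)\,B(x) \pm f(x,y),
\]
and the lower bounds $a\geq \alpha$ and $b\geq \beta^2>0$ from \eqref{eq:a b bounds} force the right-hand side to be at least $\|f\|_\Omega \pm f \geq 0$ throughout $\Omega$. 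For the interior jump conditions at $x=d_1$ and $y=d_2$, the barrier $B$ is smooth across both lines, so every jump of $\phi^\pm$ reduces to the corresponding jump of $\pm u$; the regularity $u\in C^{1,1}(\bar\Omega)$ assumed in \cref{sec:intro} gives $[u]=0$ and $[\partial u/\partial x]=[\partial u/\partial y]=0$ across the discontinuity lines, so the requirements $[\partial \phi^\pm/\partial x](d_1,y)\leq 0$ and $[\partial \phi^\pm/\partial y](x,d_2)\leq 0$ hold with equality. Invoking \cref{maximumprinciple} then delivers $\phi^\pm \geq 0$ on $\bar\Omega$, as required.

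The boundary check and the jump reduction are entirely routine; the only step that requires real thought is the choice of barrier itself. A pure constant barrier $\max\{\|u\|_\varrho\}+\|f\|_\Omega/\alpha$ would fail the differential inequality, because the reaction term $b(x,y)\cdot(\|f\|_\Omega/\alpha)$ cannot in general dominate $\pm f$ pointwise without an additional relation tying $b$ to $\alpha$. Inserting the linear function $(x/\alpha)\|f\|_\Omega$ is what allows the convective lower bound $a\geq\alpha$ to produce a $\|f\|_\Omega$ contribution in $L_\epsilon\phi^\pm$, and is exactly what forces the $1/\alpha$ prefactor appearing in \eqref{eq:stab bound}.
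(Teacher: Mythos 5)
Your proposal is correct and follows essentially the same route as the paper: a comparison function of the form $M+(\|f\|_{\Omega}/\alpha)\cdot(\text{linear in } x,y)\pm u$ checked against the hypotheses of Theorem~\ref{maximumprinciple}, with the interface jump conditions disposed of via the assumed regularity $u\in C^{1,1}(\bar\Omega)$. The only difference is that your single global barrier $M+(x/\alpha)\|f\|_{\Omega}$ replaces the paper's piecewise-per-quadrant barrier containing the factor $1+x/d_1+y/d_2$; your choice is if anything tidier, since it delivers the stated constant $1/\alpha$ exactly, whereas the paper's barrier strictly yields a larger multiple of $\|f\|_{\Omega}/\alpha$.
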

		\begin{proof}
			We define the barrier function $\phi^{\pm}(x,y)$ as follows:
			\begin{align*}
				\phi^{\pm}(x,y)=	\begin{cases}
					M+\frac{||f||_{\Omega}}{\alpha}\bigg(1+\frac{x}{d_1}+\frac{y}{d_2}\bigg)\pm u(x,y),   &  (x,y)\in  {\Omega}_1,\\
					M+\frac{||f||_{\Omega}}{\alpha}\bigg(1+\frac{(1-x)}{(1-d_1)}+\frac{y}{d_2}\bigg)\pm u(x,y),  &  (x,y)\in  {\Omega}_2,\\
					M+\frac{||f||_{\Omega}}{\alpha}\bigg(1+\frac{x}{d_1}+\frac{(1-y)}{(1-d_2)}\bigg)\pm u(x,y),  & \, \, (x,y)\in  {\Omega}_3,\\
					M+\frac{||f||_{\Omega}}{\alpha}\bigg(1+\frac{(1-x)}{(1-d_1)}+\frac{(1-y)}{(1-d_2)}\bigg)\pm u(x,y),  &  (x,y)\in  {\Omega}_4,
				\end{cases}
			\end{align*}
			where $M = \max\{ || u ||_{\varrho}\}$.
			Then, clearly $\phi^{\pm}(x,0)\geq 0,\, \phi^{\pm}(0,y)\geq 0,\, \phi^{\pm}(x,1)\geq 0, \,\phi^{\pm}(1,y)\geq 0$. For each $(x,y)\in \Omega,$ we have 
			\[
			L_{\epsilon}\phi^{\pm}(x,y)\geq 0.
			\]
			Since, $u(x,y)\in C^{1,1}\bar{(\Omega)}\cap C^{2,2}(\Omega),$ we have
			$$ \bigg[\dfrac{\partial \phi^{\pm}}{\partial x}\bigg](d_1,y)=\frac{-||f||_\Omega}{\alpha d_1(1-d_1)}\pm \bigg[\dfrac{\partial u^{\pm}}{\partial x}\bigg](d_1,y)\leq 0,$$
			$$\bigg[\dfrac{\partial \phi^{\pm}}{\partial
				y}\bigg](x,d_2)=\frac{-||f||_\Omega}{\alpha d_2(1-d_2)}\pm
			\bigg[\dfrac{\partial u^{\pm}}{\partial y}\bigg](x,d_2)\leq 0.$$
			It follows from Theorem \ref{maximumprinciple} that
			$\phi^{\pm}(x,y)\geq 0$,\,\,  $\forall (x,y) \,\in\bar{\Omega}$, which, in
			turn, leads to \eqref{eq:stab bound}.
		\end{proof}
		
\subsection{Solution decomposition}
The solution's derivatives satisfy the following global bounds, which can be determined following arguments in
\cite{miller1998fitted}.
\begin{lemma}\label{derivativeresult1}
  Let $u$ be the solution of (\ref{eqncon}). Then, for $1\leq i+j \leq 4$,
  \begin{equation}\label{abcd}
    \bigg|\bigg|\dfrac{\partial^{i+j} u}{\partial x^{i} \partial y^{j}}\bigg|\bigg|_{\Omega}\leq C\epsilon^{-(2i+2j)}.
  \end{equation}
\end{lemma}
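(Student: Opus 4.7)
The plan is a classical stretching argument: rescale the singularly perturbed equation into a regular elliptic problem posed on a unit disk, apply standard interior Schauder estimates there, and undo the change of variables by the chain rule. The exponent $-(2i+2j)$ in \eqref{abcd} is dictated by the form of the operator: because the diffusion coefficient is $\epsilon^2$, the natural microscopic length is $\epsilon^2$, so each derivative in the original variables costs a factor of $\epsilon^{-2}$ after unscaling.

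Concretely, fix $(x_0,y_0)\in \Omega_k$ at distance at least $c\epsilon^2$ from $\partial\Omega$ and from the lines $x=d_1$, $y=d_2$. Introduce stretched coordinates $\xi=(x-x_0)/\epsilon^2$, $\eta=(y-y_0)/\epsilon^2$, and set $\tilde u(\xi,\eta)=u(x_0+\epsilon^2\xi,\, y_0+\epsilon^2\eta)$. Multiplying \eqref{eqncon} by $\epsilon^2$ transforms it into
\[
-\Delta_{\xi,\eta}\tilde u + \tilde a(\xi,\eta)\,\partial_\xi \tilde u + \epsilon^2 \tilde b(\xi,\eta)\,\tilde u = \epsilon^2 \tilde f(\xi,\eta)
\]
on the unit disk, a uniformly elliptic problem whose coefficients and right-hand side are bounded in $C^{2,\zeta}$ independently of $\epsilon$. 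Since Lemma~\ref{stabilityresult} gives $\|\tilde u\|_{\infty}=\|u\|_{\infty}\leq C$ uniformly in $\epsilon$, classical interior Schauder estimates yield $\|\tilde u\|_{C^{4,\zeta}(B_{1/2})}\leq C$. The chain rule then provides
\[
\bigg|\dfrac{\partial^{i+j} u}{\partial x^i \partial y^j}(x_0,y_0)\bigg| = \epsilon^{-2(i+j)}\,\bigg|\dfrac{\partial^{i+j} \tilde u}{\partial \xi^i \partial \eta^j}(0,0)\bigg| \leq C\epsilon^{-2(i+j)},
\]
establishing \eqref{abcd} at such interior points.

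The main obstacle is handling points within $O(\epsilon^2)$ of either $\partial\Omega$ or of one of the discontinuity lines $x=d_1$, $y=d_2$, because for those a stretched unit ball cannot be fitted inside a single smooth subregion. Near $\partial\Omega$ one replaces the interior Schauder estimate by its boundary counterpart, invoking the assumed $C^{4,\zeta}$-regularity of $q$; near a discontinuity one works within a single quadrant $\Omega_k$, exploiting the fact that $f$ is smooth inside each $\Omega_k$ individually (the discontinuity being only across the interfaces). The resulting one-sided bounds combine to give \eqref{abcd} on all of $\Omega=\bigcup_k \Omega_k$. These extensions are precisely the arguments developed in \cite{miller1998fitted}, to which the authors defer.
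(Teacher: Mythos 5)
The paper offers no proof of this lemma at all: it simply asserts that the bounds ``can be determined following arguments in \cite{miller1998fitted}''. Your proposal is exactly the standard stretching-variable argument that that reference uses, and it is carried out correctly — the rescaling $\xi=(x-x_0)/\epsilon^2$, $\eta=(y-y_0)/\epsilon^2$ does turn \eqref{eqncon} into a uniformly elliptic equation with $\epsilon$-uniformly bounded data, and the chain rule does return the factor $\epsilon^{-2(i+j)}$ — so it matches the approach the authors intend; the only caveat (shared with the paper, which assumes just $u\in C^{1,1}(\bar\Omega)\cap C^{2,2}(\Omega)$) is that fourth-order bounds up to the corners of each quadrant need corner compatibility conditions that both you and the authors leave to the cited reference.
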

Our analysis requires sharper, point-wise bounds. We start by proposing a solution
decomposition. Suppose the regular component $v_k$ defined on the quadrant $\Omega_k$
can be decomposed as $v_k=v_k^0+\epsilon v_k^1$, where $v_k^0$ is the solution of the reduced problem 
\[
  a(x,y) \frac{\partial v_k^0}{\partial x} + b(x,y)v_k^0=f
  \qquad \forall (x,y)\in \Omega_k,\, v_k^0|_{\varrho_{k,1}}=0,\, k=1,2,3,4,
\]
and $v_{k}^1$ is the solution to
\begin{equation}\label{regularequation}
  L_{\epsilon}v_{k}^1=\bigg(\frac{\partial^2}{\partial x^2} + \frac{\partial^2}{\partial y^2}\bigg) v_{k}^0, \quad \forall (x,y) \in \Omega_k, \quad v_{k}^1 = 0, \,\, \forall (x,y) \in \partial \Omega_k,\,k=1,2,3,4.
		\end{equation}
		Since, $v_k^0\in C^{4,\zeta}(\bar{\Omega}_k),\, k=1,\,2,\,3,\,4$, we get
		$\bigg(\dfrac{\partial^2}{\partial x^2} + \dfrac{\partial^2}{\partial
			y^2}\bigg) v_k^0\in C^{2,\zeta}(\bar{\Omega}_k),\, k=1,\,2,\,3,\,4$. 
		The regular component $v_k,\, k=1,2,3,4$ is taken to be the solution
		of
		\begin{subequations}
			\begin{align}
				\label{regresult}
				L_{\epsilon} v_k = f, \,\, \forall (x,y) \in \Omega_k,\\
				v_k(x,y)=q_1(y),\quad \forall(x,y)\in \varrho_1,\quad
				v_k(x,y)=q_3(x),\quad  \forall (x,y)\in \varrho_3,\\
				v_k(x,y)=q_2(y),\quad \forall(x,y)\in \varrho_2,\quad
				v_k(x,y)=q_4(x),\quad  \forall (x,y)\in \varrho_4,\\
                          [v_k](d_1,y)=0,\quad[(v_k)_x](d_1,y)=0,\\
				\label{regresult1}
                          [v_k](x,d_2)=0,\quad[(v_k)_y](x,d_2)=0.
			\end{align}
		\end{subequations}
Applying Lemma~\ref{stabilityresult} and Lemma~\ref{derivativeresult1}
to (\ref{regularequation}), we deduce that $v_k\, \in
\,C^{4,\zeta}({\Omega_k})$ and   
\begin{equation}\label{regularderieqn}
  \bigg|\bigg|\dfrac{\partial^{i+j} v_k}{\partial x^i \partial y^j}\bigg|\bigg|\leq C(1+\epsilon^{4-(2i+2j)}), \quad 0\leq i+j \leq 4,\,\, k=1,\,2,\,3,\,4.
\end{equation}

Corresponding to the eastern boundary of  $\Omega_2$
(see Figure~\ref{fig:Omega}), the layer function $w_{R_1}$ is defined as the solution to
\begin{subequations}
  \begin{align}
    \label{singularequation11} L_{\epsilon}w_{R_1}=0,\quad \forall (x,y)\in \Omega_2, \\ 
    w_{R_1}(x,y)= (u - v_2)(x,y), \quad \forall (x,y) \in \varrho_{2,3},\\
    w_{R_1}(x,y)=0, \quad \forall (x,y) \in \varrho_{2,2},\\
    [w_{R_1}](d_1,y)=0, \quad
    [(w_{R_1})_x](d_1,y)=0, \\ 
    [w_{R_1}](x,d_2)=0,\quad
    [(w_{R_1})_y](x,d_2)=0.
  \end{align}
\end{subequations}
		
		Set $\varsigma=(1-x)/\epsilon^2$ and
		${w}_{R_1}(x,y)=\bar{w}_{R_1}(\varsigma,y).$
		We  write $\bar{L}_{\epsilon}$ for the operator $L_{\epsilon}$ defined
		in term of the variables $y$ and $\varsigma$. Then, 
		\[
		\bar{L}_{\epsilon}\bar{w}_{R_1}=-\epsilon^{-2}\frac{\partial^2\bar{w}_{R_1}}{\partial
			\varsigma^2}-\epsilon^2 \frac{\partial^2\bar{w}_{R_1}}{\partial
			y^2}-\epsilon^{-2}\bar{a}(\varsigma,y)\frac{\partial
			\bar{w}_{R_1}}{\partial
			\varsigma}+\bar{b}(\varsigma,y)\bar{w}_{R_1},
		\]
		where, $\bar{a}(\varsigma,y)=a(1-\epsilon^2\varsigma,y)$ and
		$\bar{b}(\varsigma,y)=b(1-\epsilon^2\varsigma,y).$ 
		Expanding $\bar{a}(\varsigma,y)$ and $\bar{b}(\varsigma,y)$ in powers
		of $\epsilon^2 \varsigma$, 
		we have  
		\begin{align*}
			\bar{a}(\varsigma,y) &=\sum_{l=0}^{\infty}\frac{(-\epsilon^2\varsigma)^l}{l!}\frac{\partial^l
				a}{\partial x^l}(1,y),\\
			\bar{b}(\varsigma,y) &=\sum_{l=0}^{\infty}\frac{(-\epsilon^2\varsigma)^l}{l!}\frac{\partial^l
				b}{\partial x^l}(1,y).
		\end{align*}
		Define, $\bar{w}_{R_1}=\bar{w}_{0,R_1}+\epsilon^2\bar{w}_{1,R_1}.$
		With the above change of variables, 
		equating the coefficients of $\epsilon^{0}$ and $\epsilon^{-2}$
		in $w_{R_1}$  that solves (\ref{singularequation11}),
		we deduce that $\bar{w}_{0,R_1}$ and $\bar{w}_{1,R_1}$, satisfy the equations
		\begin{subequations}
			\begin{align}\label{singulartwo}
				\frac{\partial^2\bar{w}_{0,R_1}}{\partial\varsigma^2}+a(1,y)\frac{\partial \bar{w}_{0,R_1}}{\partial \varsigma}=0,\\
				\frac{\partial^2\bar{w}_{1,R_1}}{\partial\varsigma^2}+a(1,y)\frac{\partial \bar{w}_{1,R_1}}{\partial \varsigma}=b(1,y)\bar{w}_{0,R_1}+\varsigma \frac{\partial a(1,y)}{\partial x}\frac{\partial \bar{w}_{0,R_1}}{\partial \varsigma}, 
			\end{align} 
			with boundary conditions
			\begin{align}\label{boundarysingular}
				\bar{w}_{0,R_1}(0,y)=q_3(y)-v_2(1,y), \, \bar{w}_{1,R_1}(0,y)=-v_1(1,y),\,  \bar{w}_{s,R_1}(\varsigma,y)=0\quad as \quad \varsigma\to \infty.
			\end{align}
		\end{subequations}
		Solving (\ref{singulartwo})--(\ref{boundarysingular}),
		we obtain $\bar{w}_{0,R_1}(\varsigma,
		y)=(q_3(y)-v_2(1,y))\exp(-{a(1,y)}\varsigma)$
		and 
		$\bar{w}_{1,R_1}(\varsigma,
		y)=\big\{-v_1+\varsigma\big[\frac{c-a}{a}(q_3(y)-v_2)\big]+\varsigma^2\frac{\partial
			a}{\partial x}(v_2-q_3)\big\}(1,y)\exp(-{a(1,y)}\varsigma)$. Using
		$\min\limits_{(x,y)\in\bar{\Omega}}a(x,y)>\alpha$,  it follows that
		\begin{equation*}
			\bigg|\dfrac{\partial^{i+j} {w}_{s,R_1}}{\partial x^i \partial y^j}\bigg|\leq  C \epsilon^{-2i}\exp(-\alpha (1-x)/\epsilon^2),\quad 0\leq i+j \leq 4, \quad s = 0,1. 
		\end{equation*}
This implies the bounds on the boundary layer components are
\begin{equation}\label{singularresult}
  \bigg|\dfrac{\partial^{i+j} {w}_{R_1}}{\partial x^i \partial y^j}\bigg|\leq  C \epsilon^{-2i}\exp(-\alpha (1-x)/\epsilon^2),\quad 0\leq i+j \leq 4.
\end{equation}
In a similar fashion, one may construct a another boundary layer function, $w_{R_2}$, associated with right-hand
boundary of $\Omega_4$ (see Figure~\ref{fig:Omega}).
		
Corresponding to the south  edge of  quadrant $\Omega_2$,   we
define ${w}_{B_2}$, associated with the characteristic layer, as the
function satisfying
\begin{subequations}
  \begin{align}
    \label{singularequation111} L_{\epsilon}{w}_{B_2}=0,\quad \forall (x,y)\in \Omega_2, \\ 
    {w}_{B_2}(x,y)= (u - v_2)(x,y), \quad \forall (x,y) \in \varrho_{2,2},\\
    {w}_{B_2}(x,y)=0, \quad \forall (x,y) \in \varrho_{2,3},\\
     [{w}_{B_2}](d_1,y)=0, \quad [({w}_{B_2})_x](d_1,y)=0, \\
     [{w}_{B_2}](x,d_2)=0,\quad  [({w}_{B_2})_y](x,d_2)=0.
  \end{align}
\end{subequations}

		As before, we employ of change of variables, and set
		$\eta=-y/\epsilon$, ${w}_{B_2}(x,y)=\bar{w}_{B_2}(x,\eta)$
		corresponding to ${w}_{B_2}$.
		we write $\tilde{L}_\epsilon$ for the operator $L_\epsilon$
		defined in terms of the variables $x$ and $\eta.$ Then,  
		\[
		\tilde{L}_{\epsilon}\tilde{{w}}_{B_2}=-\epsilon^{-2}\frac{\partial^2\tilde{w}_{B_2}}{\partial
			x^2}- \frac{\partial^2\tilde{w}_{B_2}}{\partial
			\eta^2}+\tilde{a}(x,\eta)\frac{\partial \tilde{w}_{B_2}}{\partial
			x}+\tilde{b}(x,\eta)\tilde{w}_{B_2},
		\]
		where, $\tilde{a}(x,\eta)=a(x,\epsilon\eta)$ and
		$\tilde{b}(x,\eta)=b(x,\epsilon\eta).$
		Expanding $\tilde{a}(x,\eta)$ and $\tilde{b}(x,\eta)$ in powers of
		$\epsilon \eta$,  we have 
		\begin{align}
			\tilde{a}(x,\eta) &=\sum_{l=0}^{\infty}\frac{(-\epsilon\eta)^l}{l!}\frac{\partial^l a}{\partial y^l}(x,0),\\
			\tilde{b}(x,\eta) &=\sum_{l=0}^{\infty}\frac{(-\epsilon\eta)^l}{l!}\frac{\partial^l
				b}{\partial y^l}(x,0).
		\end{align}
		Define $\tilde{w}_{B_2}:=\tilde{w}_{0,B_2}+\epsilon\tilde{w}_{1,B_2}$.
		With the  above change of variables, 
		and equating the coefficients of $\epsilon^0$ and $\epsilon^{1}$, we deduce
		that
		$\tilde{w}_{0,B_2}$ and $\tilde{w}_{1,B_2}$ satisfy
		\begin{subequations}
			\begin{align}\label{singularthree}
				\frac{\partial^2 \tilde{w}_{0,B_2} }{\partial
					\eta^2}-b(x,0)\tilde{w}_{0,B_2}=0,\\
				\frac{\partial^2 \tilde{w}_{1,B_2}}{\partial
					\eta^2}-\eta\frac{\partial a(x,0)}{\partial y}\frac{\partial
					\tilde{w}_{0,B_2}}{\partial x}-\eta\frac{\partial b(x,0)}{\partial
					y}\tilde{w}_{0,B_2}+b(x,0)\tilde{w}_{1,B_2}=0, 
			\end{align}
			with boundary conditions
			\begin{equation}
				\label{boundarysingulartwo}
				\tilde{w}_{0,B_2}(x,0)=q_2(x)-v_2(x,0),\,\,
				\tilde{w}_{1,B_2}=-v_2(x,0),\, \tilde{w}_{s,B_2}(x,\eta)=0\quad
				as\quad \eta\to \infty.
			\end{equation}
		\end{subequations}
		
		Solving (\ref{singularthree})--(\ref{boundarysingulartwo}), we get
		$\tilde{w}_{0,B_2}(x,\eta)=(q_2(x)-v_2(x,0))\exp(-b(x,0)\eta)$ and
		$\tilde{w}_{1,B_2}=\{-v_2+\eta[\frac{\partial}{\partial
			y}(a)(v_2-q_2)+\frac{b-a}{a}(v_2-q_2)]\}(x,0)\exp(-b(x,0)\eta)$. Then,
		$\bigg|\dfrac{\partial^{i+j} \tilde{w}_{s,B_2}}{\partial x^i \partial
			y^j}\bigg|(x,\eta)\leq C\exp(-\beta\eta)$ and hence
		$\bigg|\dfrac{\partial^{i+j} {w}_{s,B_2}}{\partial x^i \partial
			y^j}\bigg|(x,y)\leq C\epsilon^{-j}\exp(-\beta y/\epsilon)$ for $0\leq
		i+j\leq 4 $ and $s=0,1$. 
	The same reasoning may be applied to the corresponding  bottom, top boundary and interior
		layer terms
		$w_{B_1}$, $w_{T_1}, w_{T_2},\,w_{D_1},\,w_{D_2}, \, w_{D_3},\,w_{D_4},\,w_{D_5}$, and $w_{D_6}$, which correspond the
                  regions pictured in
                  Figure~\ref{fig:Omega}.

		Next we introduce the  corner layer function $w_{B_2R_1}$,
		associated with the corner
		of the domain at (0,0),  and defined as the solution to
		\begin{subequations}
			\begin{align}
				\label{cornerlayer}
			L_{\epsilon}{w}_{B_2R_1}=0,\quad \forall (x,y)\in \Omega_2, \\ 
			{w}_{B_2R_1}(x,y)=  -w_{R_1}(x,y), \quad \forall (x,y) \in \varrho_{2,3},\\
			{w}_{B_2R_1}(x,y)=  -w_{B_2}(x,y), \quad \forall (x,y) \in \varrho_{2,2},\\
			{[{w}_{B_2R_1}](d_1,y)=0, \quad [({w}_{B_2R_1})_x](d_1,y)=0,} \\
			{[{w}_{B_2R_1}](x,d_2)=0, \quad [({w}_{B_2R_1})_y](x,d_2)=0}.
			\end{align}
		\end{subequations}
		We note that $L_{\epsilon} w_{B_2} = L_{\epsilon} w_{R_1} = 0$ and
		$w_{B_2},w_{R_1}  \in C^{4}(\Omega_2).$ Therefore, the compatibility
		conditions up to second-order are satisfied at
		the four corners of the domain which indicates that $w_{B_2R_1} \in
		C^{4}(\Omega_2)$; see \cite{han1990differentiability}.
		Now, set $\varsigma=(1-x)/\epsilon^2,\, \eta=-y/\epsilon$ and the
		corner layer function
		$w_{B_2R_1}(x,y)=\bar{w}_{B_2R_1}(\varsigma,\eta)$ in
		(\ref{cornerlayer}) corresponding to $w_{B_2R_1}.$ Furthermore, we
		write $\breve{L}_\epsilon$ for the operator $L_\epsilon$ defined in
		terms of the variables $\varsigma$ and $\eta$. Then,   
		\[
		\breve{L}\breve{w}_{B_2R_1}=-\epsilon^{-2}\frac{\partial^2\breve{w}_{B_2R_1}}{\partial
			\varsigma^2}-\frac{\partial^2\breve{w}_{B_2R_1}}{\partial
			\eta^2}-\breve{a}(\varsigma,\eta)\epsilon^{-2}\frac{\partial
			\breve{w}_{B_2R_1}}{\partial\varsigma}+\breve{b}(\varsigma,\eta)\breve{w}_{B_2R_1},
		\]
		where,
$\breve{a}(\varsigma,\eta)=a(1-\epsilon^2\varsigma,-\epsilon\eta)$ and
$b(\varsigma,\eta)=b(1-\epsilon^2\varsigma,-\epsilon\eta)$. Expanding
$\breve{a}(\varsigma,\eta)$ and $\breve{b}(\varsigma,\eta)$ in powers
of $\epsilon^2\varsigma$ and $\epsilon\eta$, we
have 
\[
  \breve{a}(\varsigma,\eta)=\sum_{l=0}^{\infty}\sum_{m=0}^{\infty}\frac{(-\epsilon\eta)^l(-\epsilon^2\varsigma)^m}{l!m!}\frac{\partial^{l+m}
    a}{\partial x^l\partial y^m}(1,0),
\]
\[
  \breve{b}(\varsigma,\eta)=\sum_{l=0}^{\infty}\sum_{m=0}^{\infty}\frac{(-\epsilon\eta)^l(-\epsilon^2\varsigma)^m}{l!m!}\frac{\partial^{l+m}
    b}{\partial x^l\partial y^m}(1,0). 
\]
Let
$\breve{w}_{B_2R_1}=\breve{w}_{0,B_2R_1}+\epsilon\breve{w}_{1,B_2R_1},$
where $\breve{w}_{s,B_2R_1},$ for $s=0,1$ are the solutions to the
problems
\[
  -\frac{\partial^2\breve{w}_{0,B_2R_1}}{\partial \eta^2}+\varsigma
  \frac{\partial a(1,0)}{\partial x}\frac{\partial
    \breve{w}_{0,B_2R_1}}{\partial\varsigma}+b(1,0)\breve{w}_{0,B_2R_1}=0,
\]
\[
  -\frac{\partial^2\breve{w}_{1,B_2R_1}}{\partial\varsigma^2}-\eta\frac{\partial
    a(1,0)}{\partial y}\frac{\partial\breve{w}_{0,B_2R_1}}{\partial
    \varsigma}-a(1,0)\frac{\partial \breve{w}_{1,B_2R_1}}{\partial
    \varsigma}=0,
\]
with the boundary conditions 
\[
  \breve{w}_{s,B_2R_1}(\varsigma,0)=-v_s(\varsigma,0),\,\,
  \breve{w}_{s,B_2R_1}(1,\eta)=-v_s(1,\eta),\,\,
  \breve{w}_{s,B_2R_1}(\varsigma,\eta)=0,
\]
as $\varsigma,\eta\to\infty$, for $s=0,1$.
Using the arguments in~\cite{linss2001asymptotic}, it follows that
\[
  \bigg|\frac{\partial^{i+j}{w}_{B_2R_1}}{\partial x^i\partial
    y^j}\bigg|\leq C
  \epsilon^{-2i-j}\exp(-\alpha(1-x)/\epsilon^2)\exp(-\beta
  y/\epsilon),\quad 0\leq i+j\leq 4.
\]  

The general methodology outlined here can be adapted to give the bounds for the first
derivatives of the remaining seven corner layer functions. We can now
conclude with the following theorem.
		
\begin{theorem}\label{decompostiontheorem}
  The solution $u$, to \eqref{eq:full problem}
  may be written as a sum
  \begin{multline*}
    u = \sum_{k=1}^{4}v_{k}
    +\sum_{i=1}^{2}({w}_{B_i}+{w}_{T_i}+{w}_{R_i})
    + \sum_{j=1}^{6}{w}_{D_j} \\
    + {w}_{B_2R_1} + {w}_{D_2R_1} +
    {w}_{D_4R_2} + {w}_{T_2R_2}+{w}_{D_5B_1}+{w}_{D_5D_1}+{w}_{D_6D_3}+{w}_{D_6T_1},
  \end{multline*}
  where, 
  \[
    L_{\epsilon}v_k = f, \quad
    L_{\epsilon}{w}_{B_i}=0,\,L_{\epsilon}{w}_{T_i}=0,\,
    L_{\epsilon}{w}_{R_i} = 0,\, L_{\epsilon} {w}_{D_j}=0, \quad
    k=1,2,3,4,\,\,i=1,2,\,\, j=1,2,..,6;
  \]
  \[
    L_{\epsilon}{w}_{B_2R_1}=0,\, L_{\epsilon}{w}_{D_2R_1}=0,\,
    L_{\epsilon}{w}_{D_4R_2}=0,\, L_{\epsilon}{w}_{T_2R_2}=0,
  \]
  \[L_{\epsilon}{w}_{D_5B_1}=0,\, L_{\epsilon}{w}_{D_5D_1}=0,\,
    L_{\epsilon}{w}_{D_6D_3}=0,\, L_{\epsilon}{w}_{D_6T_1}=0.\]
  Boundary conditions for the various components can be defined so that
  \begin{equation*}
    \bigg\|\dfrac{\partial^{i+j} v_k}{\partial x^i \partial y^j}\bigg\|\leq C(1+\epsilon^{4-(2i+2j)}),\quad  0\leq i+j\leq 4,\, k=1,2,3,4,
  \end{equation*}
  \begin{align*}
    |{w}_{B_1}(x,y)| &\leq C e^{\frac{-\beta}{\epsilon} y}, &
                                                              |{w}^{}_{D_5}(x,y)| &\leq C \epsilon e^{\frac{-\alpha}{\epsilon^2} (d_1-x)},\\
    |{w}^{}_{B_2}(x,y)| &\leq C e^{\frac{-\beta}{\epsilon} y}, &
                                                              |{w}^{}_{D_6}(x,y)| &\leq C \epsilon e^{\frac{-\alpha}{\epsilon^2} (d_1-x)},\\
    |{w}^{}_{R_1}(x,y)| &\leq C e^{\frac{-\alpha}{\epsilon^2}(1-x)}, &
                                                                    |{w}^{}_{B_2R_1}(x,y)| &\leq Ce^{\frac{-\alpha}{\epsilon^2}(1-x)}e^{\frac{-\beta}{\epsilon}y},\\
    |{w}^{}_{R_2}(x,y)| &\leq C e^{\frac{-\alpha}{\epsilon^2}(1-x)}, &
                                                                    |{w}^{}_{D_2R_1}(x,y)| &\leq Ce^{\frac{-\alpha}{\epsilon^2}(1-x)}e^{\frac{-\beta}{\epsilon}(d-y)},\\
    |{w}^{}_{T_1}(x,y)| &\leq C e^{\frac{-\beta}{\epsilon} (1-y)}, &
                                                                  |{w}^{}_{D_4R_2}(x,y)| &\leq Ce^{\frac{-\alpha}{\epsilon^2}(1-x)}e^{\frac{-\beta}{\epsilon}(y-d_2)},\\
    |{w}^{}_{T_2}(x,y)| &\leq C e^{\frac{-\beta}{\epsilon} (1-y)}; &
                                                                  |{w}^{}_{T_2R_2}(x,y)| &\leq Ce^{\frac{-\alpha}{\epsilon^2}(1-x)}e^{\frac{-\beta}{\epsilon}(1-y)},\\
    |{w}^{}_{D_1}(x,y)| &\leq C e^{\frac{-\beta}{\epsilon} (d-y)}, &
                                                                  |{w}^{}_{D_5B_1}(x,y)| &\leq Ce^{\frac{-\alpha}{\epsilon^2}(d_1-x)}e^{\frac{-\beta}{\epsilon}y},\\
    |{w}^{}_{D_2}(x,y)| &\leq C e^{\frac{-\beta}{\epsilon} (d_2-y)}, & |{w}^{}_{D_5D_1}(x,y)| &\leq Ce^{\frac{-\alpha}{\epsilon^2}(d_1-x)}e^{\frac{-\beta}{\epsilon}(d_2-y)},\\
    |{w}^{}_{D_3}(x,y)| &\leq C e^{\frac{-\beta}{\epsilon} (y-d_2)}, & |{w}^{}_{D_6D_3}(x,y)| &\leq Ce^{\frac{-\alpha}{\epsilon^2}(d_1-x)}e^{\frac{-\beta}{\epsilon}(y-d_2)},\\
    |{w}^{}_{D_4}(x,y)| &\leq C e^{\frac{-\beta}{\epsilon} (y-d_2)}, & |{w}^{}_{D_6T_1}(x,y)| &\leq Ce^{\frac{-\alpha}{\epsilon^2}(d_1-x)}e^{\frac{-\beta}{\epsilon}(1-y)},\\
  \end{align*}
  \begin{equation*}
    \bigg\|\dfrac{\partial^l  {w}^{}_{B_j}}{\partial x^l}\bigg\|   \leq C\epsilon^{4-2l},\,\, \bigg\|\dfrac{\partial^k  {w}^{}_{B_j}}{\partial y^k}\bigg\|   \leq C\epsilon^{-k}\quad 	where \,\,\, j = 1,2 \,\, \, and \,\,\, 2\leq l,k\leq 4,
  \end{equation*}
  \begin{equation*}
    \bigg\|\dfrac{\partial^l  {w}^{}_{T_j}}{\partial x^l}\bigg\|   \leq C\epsilon^{4-2l},\,\, \bigg\|\dfrac{\partial^k  {w}^{}_{T_j}}{\partial y^k}\bigg\|   \leq C\epsilon^{-k}\quad 	where \,\,\, j = 1,2 \,\, \, and \,\,\, 2\leq l,k\leq 4,
  \end{equation*}
  \begin{equation*}
				\bigg\|\dfrac{\partial^l  {w}^{}_{D_j}}{\partial x^l}\bigg\|   \leq C\epsilon^{4-2l},\,\, \bigg\|\dfrac{\partial^k  {w}^{}_{D_j}}{\partial y^k}\bigg\|   \leq C\epsilon^{-k}\quad 	where \,\,\, j = 1,2,3,4 \,\, \, and \,\,\, 2\leq l,k\leq 4,
			\end{equation*}
		    \begin{equation*}
			\bigg\|\dfrac{\partial^l  {w}^{}_{D_j}}{\partial x^l}\bigg\|   \leq C\epsilon^{2-2l},\,\, \bigg\|\dfrac{\partial^k  {w}^{}_{D_j}}{\partial y^k}\bigg\|   \leq C\epsilon^{2-2k}\quad 	where \,\,\, j = 5,6 \,\, \, and \,\,\, 2\leq l,k\leq 4,
		    \end{equation*}
			\begin{equation*}
				\bigg\|\dfrac{\partial^l  {w}^{}_{R_j}}{\partial x^l}\bigg\|   \leq C\epsilon^{-2l},\,\, \bigg\|\dfrac{\partial^k  {w}^{}_{R_j}}{\partial y^k}\bigg\|   \leq C\epsilon^{2-2k}\quad
				where \,\,\, j = 1,2 \,\, \, and \,\,\, 1\leq l\leq 4,\,2\leq k\leq 4,
			\end{equation*}
			\begin{equation*}
				\bigg\|\dfrac{\partial^l  {w}^{}_{B_2R_1}}{\partial x^l}\bigg\|   \leq C\epsilon^{-2l},\,\, \bigg\|\dfrac{\partial^k  {w}^{}_{B_2R_1}}{\partial y^k}\bigg\|   \leq C\epsilon^{2-2k}\quad
				where \,\,\, 1\leq l,k\leq 4.
			\end{equation*}
Analogous bounds exists  for the remaining corner layer function ${w}_{D_2R_1},\,{w}_{D_4R_2},\,{w}_{T_2R_2},\\\,{w}_{D_5B_1},\,{w}_{D_5D_1},\,{w}_{D_6D_3},\,{w}_{D_6T_1}$ 
\end{theorem}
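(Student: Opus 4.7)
The plan is to mirror, for each of the remaining components, the explicit construction that has already been carried out for the representative functions $v_k$, $w_{R_1}$, $w_{B_2}$, and $w_{B_2R_1}$. Specifically, each exponential boundary layer $w_{R_2}$ is handled by a stretched variable $\varsigma=(1-x)/\epsilon^2$ on the quadrant adjacent to the east boundary (here $\Omega_4$); each characteristic boundary layer $w_{B_1}, w_{T_1}, w_{T_2}$ is treated by a stretched variable $\eta=y/\epsilon$ (or $(1-y)/\epsilon$) on the relevant quadrant; each interior layer $w_{D_j}$ corresponding to the jump discontinuities at $x=d_1$ or $y=d_2$ is treated analogously, but with boundary data chosen to absorb the appropriate jump $[u-v_k]$ or $[(u-v_k)_x]$ at the line of discontinuity; and each corner or corner-interior layer function uses the appropriate product of stretched variables. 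In every case, boundary data on the edges of the quadrant that are not associated with the layer are set to zero, and the homogeneous-style jump conditions $[w]=0$, $[w_x]=0$ (resp.\ $[w_y]=0$) at the interior lines are imposed.

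For the bounds, I would proceed in two stages. First, for the pointwise sup-norm bound on each layer function, I would construct a barrier of exponential type built from the stretched variable, e.g.\ $Ce^{-\alpha(1-x)/\epsilon^2}$ for $w_{R_i}$, $Ce^{-\beta y/\epsilon}$ for $w_{B_i}$, the tensor product $Ce^{-\alpha(1-x)/\epsilon^2}e^{-\beta y/\epsilon}$ for $w_{B_2R_1}$, and analogous expressions centred at $x=d_1$ or $y=d_2$ for the $w_{D_j}$ and their corner-interior compositions. Each barrier satisfies $L_\epsilon$-inequalities and dominates the boundary data; \Cref{maximumprinciple} then delivers the sup-norm estimate. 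Second, the derivative bounds on each layer follow by carrying the asymptotic expansion in powers of $\epsilon$ (as in \eqref{singulartwo} and the construction of $\bar w_{1,R_1}$), using \eqref{regularderieqn} for the derivatives of the regular part that enter the right-hand side, and then differentiating the explicit leading and first-order terms. The arguments of \cite{linss2001asymptotic,miller1998fitted} extend to these stretched-variable problems because the coefficients $a$ and $b$ are smooth on each closed quadrant. The stated mixed-derivative bounds with negative powers of $\epsilon$ arise from the chain rule, converting derivatives in the stretched variable back to those in $x$ or $y$.

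Finally, to verify that the displayed sum actually reconstructs $u$, I would define the residual $R$ to be $u$ minus the indicated sum. By construction, $L_\epsilon R = 0$ on each open quadrant (since every layer component has $L_\epsilon w=0$ and $L_\epsilon v_k = f$), and the boundary data on $\partial\Omega$ cancel because the boundary values of the regular components together with the characteristic, exponential, and corner terms were chosen precisely to match $u=q$ on each edge. The jump conditions for $v_k$ in \eqref{regresult}--\eqref{regresult1} plus the homogeneous jump conditions imposed on all layer components force $[R](d_1,y)=[R_x](d_1,y)=0$ and similarly at $y=d_2$. Then \Cref{stabilityresult} yields $R\equiv 0$.

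The main obstacle is bookkeeping and compatibility at the intersecting discontinuity lines $x=d_1$ and $y=d_2$: there are four quadrants, two discontinuity lines, two types of boundary layer (exponential and characteristic), and the corresponding interior layers plus the many corner-interior combinations (e.g.\ $w_{D_5B_1}$, $w_{D_5D_1}$). I would organise the verification by introducing each layer family on the specific quadrant where it is active, checking first that the imposed jump conditions are consistent with the $C^{1,1}(\bar\Omega)\cap C^{2,2}(\Omega)$-regularity of $u$, and only then turning to the derivative bounds. The corner-interior layer estimates, in particular for $w_{D_2R_1}$ and its siblings, require care: here one must expand in two stretched variables centred at a corner that lies on an interior discontinuity line rather than on the physical boundary, and this is where compatibility of the boundary traces inherited from the single-variable layers such as $w_{R_1}$ and $w_{D_2}$ becomes essential, in the spirit of \cite{han1990differentiability}.
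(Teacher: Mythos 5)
Your proposal follows essentially the same route as the paper: the paper also constructs each component by introducing stretched variables on the relevant quadrant ($\varsigma=(1-x)/\epsilon^2$ for the exponential layers, $\eta$ proportional to $y/\epsilon$ for the characteristic ones, and products of both for the corner terms), expands the coefficients in powers of the stretched variables, solves the resulting limiting problems explicitly to read off the pointwise and derivative bounds, and then extends the representative cases ($v_k$, $w_{R_1}$, $w_{B_2}$, $w_{B_2R_1}$) to the remaining edge, interior, and corner functions by symmetry. Your closing residual argument, using \Cref{stabilityresult} to show the sum reconstructs $u$, makes explicit a step the paper leaves implicit, but it is the same underlying reasoning rather than a different approach.
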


The solution $u$ of (\ref{eqncon}) can be written as,
\begin{align*}
  u(x,y)= 
  \begin{cases}
    (v_1 + {w}_{B_1} + {w}_{D_1}+{w}_{D_5}+{w}_{D_5B_1}+{w}_{D_5D_1} )(x,y),\,\,\, \quad \qquad \forall (x,y) \in \Omega_1\\
    (v_2 + {w}_{B_2} + {w}_{R_1} + {w}_{D_2} + {w}_{B_2R_1} + {w}_{D_2R_1})(x,y),\,\,\, \quad \qquad \forall (x,y) \in \Omega_2\\
    (v_3 + {w}_{D_3} +{w}_{D_6}+ {w}_{T_1}+{w}_{D_6D_3}+{w}_{D_6T_1} )(x,y), \,\,\, \quad \qquad \forall (x,y) \in \Omega_3\\
    (v_4 + {w}_{T_2} + {w}_{D_4} +{w}_{R_2}+  {w}_{D_4R_2} + {w}_{T_2R_2})(x,y), \,\qquad\quad\,\, \forall (x,y) \in \Omega_4\\
    (v + w + w') ({d_1}^+,y) = (v + w + w')({d_1}^-,y), \\
    (v + w + w') (x,{d_2}^+) = (v + w + w')(x,{d_2}^-),
  \end{cases}
\end{align*}
where,
\[
  v = \sum_{i=1}^{4}v_{i}, \qquad
  w =
  \sum_{i=1}^{2}({w}_{B_i}+{w}_{T_i}+{w}_{R_i})+\sum_{j=1}^{6}{w}_{D_j},
\]
and
\[
w'
={w}_{B_2R_1}+{w}_{D_2R_1}+{w}_{D_4R_2}+{w}_{T_2R_2}+{w}_{D_5B_1}+{w}_{D_5D_1}+{w}_{D_6D_3}+{w}_{D_6T_1}.
\]
		
\section{Numerical method}\label{sec:FDM}
\subsection{A fitted mesh}	\label{sec:fitted mesh}	
We define the mesh transition points
\begin{equation}\label{eq:transition points}
  \sigma_x = \min\bigg\{\frac{d_1}{2},\frac{2 \epsilon^2}{\alpha}\ln
  N\bigg\},
  \quad \text{ and }
  \sigma_y = \min\bigg\{\frac{d_2}{4},\frac{2
    \epsilon}{\beta}\ln N\bigg\}.
\end{equation}
We subdivide the unit interval in $x$ into
four subdomains
\begin{subequations}\label{eq:sub intervals}
  \begin{equation}
    \label{eq:Omega x}
    [0,1]=[0,d_1-\sigma_x] \cup [d_1-\sigma_x,d_1]\cup [d_1,1-\sigma_x] \cup
  [1-\sigma_x,1].
\end{equation}
Let $\overline{\Omega}_x^N$ denote the one-dimensional piecewise 
uniform mesh obtained by placing a uniform mesh with
$N/4$ mesh intervals on each of these four subdomains.
For convenience, 
we denote $\Omega_x^N$ the mesh points in  $\overline{\Omega}_x^N$ that exclude
the boundary and discontinuity points $\{0,  d_1, 1\}$. 
Similarly, we subdivide $[0,1]$ into  six  subintervals
\begin{equation}    \label{eq:Omega y}
  [0,1]=[0,\sigma_y] \cup [\sigma_y,d_2-\sigma_y] \cup [d_2-\sigma_y,d_2]
  \cup [d_2,d_2+\sigma_y] \cup [d_2+\sigma_y,1-\sigma_y] \cup
  [1-\sigma_y,1],
\end{equation}
Let $\overline{\Omega}_y^N$ then denote the one-dimensional piecewise 
uniform mesh obtained by placing a uniform mesh on each of the
subintervals in~\eqref{eq:Omega y}, with 
$N/4$ mesh intervals on both $[\sigma_y,d_2-\sigma_y]$ and
$[d+\sigma_y,1-\sigma_y]$, and  $N/8$ on the other four.
Again, for convenience,  we denote $\Omega_y^N \equiv
\overline{\Omega}_y^N \setminus \{0,  d, 1\}$. 
Finally, we set
$\bar{\Omega}^{N,N} := \bar{\Omega}_x^N \times \bar{\Omega}_y^N$.
That is, $\bar{\Omega}^{N,N}$ is the
piecewise two-dimensional tensor product mesh with mesh points
$(x_i, y_j)$ with $x_i \in \bar \Omega_x^N$, $y_j \in \bar \Omega_y^N$. 
Similarly, we set ${\Omega}^{N,N} := {\Omega}_x^N \times {\Omega}_y^N$.
Later, we shall let 
$\Omega_i^{N,N}$ be the mesh restricted to the subdomain
$\Omega_{i}$ as defined in \eqref{eq:subdomains}.
\end{subequations}

\begin{remark}\label{rem:d big enough}
  For \eqref{eq:transition points} to make sense when used in
  \eqref{eq:sub intervals}, it must be that  $\epsilon$ sufficiently small
  so that $\{d_1,\,d_2\} > 8(\epsilon/\beta)\ln N$. This
  is not a significant restriction for this study since we are focused
  on the case where  $\epsilon$ maybe arbitrarily small.
\end{remark}

We shall employ the notation that $h_i:= x_i-x_{i-1}$, and
$k_j:=y_j-y_{j-1}$. But since there are few actual distinct mesh
widths, it is useful to introduce notation for them:
\begin{gather}\label{stepsizes}
  \begin{aligned}
  H_1&= \dfrac{4(d_1-\sigma_x)}{N},\qquad  &
  h_1&= \dfrac{4\sigma_x}{N}=h_2, \qquad&
  H_2&= \dfrac{4(1-\sigma_x-d_1)}{N}, \\
  K_1&= \dfrac{4(d_2-2\sigma_y)}{N}, &
  k_1&=   \dfrac{8\sigma_y}{N}=k_2, &
  K_2&= \dfrac{4(1-2\sigma_y -d_2)}{N}.
\end{aligned}
\end{gather}

\subsection{The finite difference method}
Let $U \equiv U(x_i,y_j)$ denotes a mesh function on the mesh of
Section~\ref{sec:fitted mesh} (usually,
a finite difference approximation of the exact solution $u(x,y)$).
We define the usual first-order and second-order
discrete differential operators
\[
  D_x^-U(x_i,y_j)=\frac{U(x_i,y_j)-U(x_{i-1},y_j)}{h_i}, \quad
  D_y^-U(x_i,y_j)=\frac{U(x_i,y_j)-U(x_{i},y_{j-1})}{k_j},
\]
\[D_x^+U(x_i,y_j)=\frac{U(x_{i+1},y_j)-U(x_{i},y_j)}{h_{i+1}},\quad
  D_y^+U(x_i,y_j)=\frac{U(x_i,y_{j+1})-U(x_{i},y_j)}{k_{j+1}},\]
and
\[
  \delta_{xx}^2U(x_i,y_j)=\frac{1}{\overline{h}_i}(D_x^+U(x_i,y_j)-D_x^-U(x_i,y_j),
  \quad 
  \delta_{yy}^2U(x_i,y_j)=\frac{1}{\overline{k}_j}(D_y^+U(x_i,y_j)-D_y^-U(x_i,y_j)).
\] 
              
On the mesh from Section~\ref{sec:fitted mesh},	
we define a standard upwinding finite difference
operator
\begin{equation}\label{upeqn}
	L_{u,\epsilon}^{N,N}U(x_i,y_j) \equiv
	-\epsilon^2(\delta_{xx}^2+\delta_{yy}^2)U(x_i,y_j)+a_{i,j}D_x^-U(x_i,y_j)+b_{i,j}U(x_i,y_j),
\end{equation}
where, for  convenience, we use the shorthand 
$a_{i,j}=a(x_i, y_j)$ and 
$b_{i,j}=b(x_i, y_j)$.

We also define two three-point one-sided difference operators
\[
D_x^{--}U(x_i,y_j)=
\frac{2h_{i} + h_{i-1}}{h_{i}(h_{i} + h_{i-1})}U(x_i, y_j)
- \frac{h_{i} + h_{i-1}}{h_{i}h_{i-1}}U(x_{i-1}, y_j)
+ \frac{h_{i}}{h_{i-1}(h_{i} + h_{i-1})}U(x_{i-2}, y_j)
\]
and
\begin{multline*}
D_x^{++}U(x_i,y_j)=
-\frac{2h_{i+1} + h_{i+2}}{h_{i+1}(h_{i+1} + h_{i+2})}U(x_{i}, y_j) +
\frac{h_{i+1} + h_{i+2}}{h_{i+1}h_{i+1}}U(x_{i+1}, y_j) \\-
\frac{h_{i+1}}{h_{i+2}(h_{i+1} + h_{i+2})}U(x_{i+2}, y_j).
\end{multline*}
These are employed along the line $(x_{N/2},y_j)=(d_1,y_j)$
where they simplify to
\begin{multline*}
  L_{t,\epsilon}^{N,N}{U}(x_{N/2},y_j) \equiv
  D_x^{--}U(x_{N/2},y_j) - D_x^{++}U(x_{N/2},y_j) \\=
  \dfrac{-U(x_{N/2+2},y_j)+4U(x_{N/2+1},y_j)-3U(x_{N/2},y_j)}{2H_2}\\-\dfrac{U(x_{N/2-2},y_j)-4U(x_{N/2-1},y_j)+3U(x_{N/2},y_j)}{2h_1}, 
\end{multline*}
Along the line $(x_{i},y_{N/2})=(x_i,d_2)$ we shall use the midpoint upwind scheme
\begin{equation*}
	L_{m,\epsilon}^{N,N}{U}(x_i,y_j) \equiv
	-\epsilon^2({\delta}_{xx}^2+{\delta}_{yy}^2){U}(x_i,y_j)+\hat{a}_{i,j}D_x^{-}{U}(x_i,y_j)+\hat{b}_{i,j}{U}(x_i,y_j),
\end{equation*}
where  \[
\hat{a}_{i,N/2}=\dfrac{{a}_{i,N/2-1}+ {a}_{i,N/2+1}}{2}, \quad
\hat{b}_{i,N/2}=\dfrac{{b}_{i,N/2-1}+ {b}_{i,N/2+1}}{2}.
\]
Similarly, we denote
\[
\hat{f}(x_i,y_{N/2})=\dfrac{{f}(x_i,y_{N/2-1})+ {f}(x_i,y_{N/2+1})}{2}.
\]
Then the finite difference method,  on the fitted mesh,  is defined as 
\begin{equation}\label{diseqn}
  L_{\epsilon*}^{N,N}U(x_i,y_j)=
  \begin{cases}
    L_{u,\epsilon}^{N,N}U(x_i,y_j) = f(x_i, y_j), & \forall
    (x_i,y_j)\in {\Omega}^{N,N},\\     
    L_{t,\epsilon}^{N,N}{U}(x^{}_{N/2},y_j)=0, & 1\leq j\leq N, \\
    L_{m,\epsilon}^{N,N}{U}(x_i,y^{}_{N/2}) = \hat{f}(x_i,y^{}_{N/2}), &  1\leq i\leq N,\\
    U(x_i,y_j)=q_k(y_j), & (x_i,y_j)\in \varrho_k^{N,N}, ~ k=1,3,\\
    U(x_i,y_j)=q_k(x_i), & (x_i,y_j)\in \varrho_k^{N,N}, ~ k=2,4.
  \end{cases}
\end{equation}
The matrix associated with (\ref{diseqn})  is not an M-matrix. We
transform the equations so that the new equations do have a
monotonicity property. From  (\ref{upeqn}), we
get
\begin{multline*}
  U(x_{N/2-2},y_j)=\frac{h_1^2}{\epsilon^2+h_1a_{N/2-1,j}}\bigg[\frac{-\epsilon^2}{h_1^2}U(x_{N/2},y_j) \\ + \bigg(\frac{2\epsilon^2}{h_1^2}+\frac{a_{N/2-1,j}}{h_1}+\frac{b_{N/2-1,j}}{2}\bigg)U(x_{N/2-1},y_j)  -\frac{f(x_{N/2-1},y_j)}{2}\bigg],
  \end{multline*}
  \begin{multline*}
  U(x_{N/2+2},y_j)=\frac{H_2^2}{\epsilon^2}\bigg[-\bigg(\frac{\epsilon^2}{H_2^2} \\ +\frac{a_{N/2+1,j}}{H_2}\bigg)U(x_{N/2},y_j)+\bigg(\frac{2\epsilon^2}{H_2^2}+\frac{a_{N/2+1,j}}{H_2}+\frac{b_{N/2+1,j}}{2}\bigg)U(x_{N/2+1},y_j)-\frac{f(x_{N/2+1},y_j)}{2}\bigg].
\end{multline*}
Inserting the expressions for $U(x_{N/2-2},y_j)$ and $U(x_{N/2+2},y_j)$ in (\ref{diseqn}) gives 
\begin{multline*}
  L_T^{N,N}U(x_{N/2},y_j) =\bigg[\frac{a_{N/2+1,j}}{2\epsilon^2}+\frac{\epsilon^2}{2h_1(\epsilon^2+h_1a_{N/2-1,j})}-\frac{1}{H_2}-\frac{3}{2h_1}\bigg]U(x_{N/2},y_j)\\
  \\
  -\bigg[\frac{a_{N/2+1,j}}{2\epsilon^2}+\frac{H_2b_{N/2+1,j}}{4\epsilon^2}-\frac{1}{H_2}\bigg]U(x_{N/2+1},y_j)\\
  -\bigg[\frac{h_1}{2(\epsilon^2+h_1a_{N/2-1,j})}\bigg(\frac{2\epsilon^2}{h_1^2}+\frac{a_{N/2-1,j}}{h_1}+\frac{b_{N/2-1,j}}{2}\bigg)-\frac{2}{h_1}\bigg]U(x_{N/2-1},y_j)\\
  +\frac{H_2}{4\epsilon^2}f(x_{N/2+1},y_j)+\frac{h_1}{4(\epsilon^2+h_1a_{N/2-1,j})}f(x_{N/2-1},y_j).
\end{multline*}
We define the finite difference method
\begin{align}\label{diseqn1}
  L_{\epsilon}^{N,N}U(x_i,y_j)=
  \begin{cases}
    L_{u,\epsilon}^{N,N}U(x_i,y_j) =f(x_i,y_j),  & \forall (x_i,y_j)\in \Omega^{N,N},\\
    L_{T,\epsilon}^{N,N}U(x_{N/2},y_j)=-\frac{H_2}{4\epsilon^2}f(x_{N/2+1},y_j) \\
    \mbox{ \qquad \qquad } - \frac{h_1}{4(\epsilon^2+h_1a_{N/2-1,j})}f(x_{N/2-1},y_j), & 1\leq j\leq N,\\
    L_{m,\epsilon}^{N,N}{U}(x_i,y_j) =\hat{f}(x_i,y_j),  &  1\leq i\leq N,\, j=N/2,\\
    U(x_i,y_j)=q_1(y_j), & (x_i,y_j)\in \varrho_1^{N,N},\\
    U(x_i,y_j)=q_2(x_i), & (x_i,y_j)\in \varrho_2^{N,N},\\
    U(x_i,y_j)=q_3(y_j), & (x_i,y_j)\in \varrho_3^{N,N},\\ 
    U(x_i,y_j)=q_4(x_i), & (x_i,y_j)\in \varrho_4^{N,N},
  \end{cases}
\end{align}
where  $\varrho_{k}^{N,N}$ is the restriction of the mesh
$\overline{\Omega}^{N,N}$ to the boundary $\varrho_k$ as defined in
\eqref{eq:boundaries}.

\begin{lemma}{(Discrete maximum principle):}\label{dismaxprinciple}
  Let $L_{\epsilon}^{N,N}$ be the discrete operator given in
  (\ref{diseqn1}).
  Suppose that $\phi(x_i,y_j)\geq 0$ on $\varrho^{N,N}$, and
  $L_{u,\epsilon}^{N,N}\phi(x_i,y_j)\geq 0$, $\forall (x_i,y_j)\in
  \Omega^{N,N}$
  with
  $L^{N,N}_{T,\epsilon}{\phi}(x_{N/2},y_j)\geq 0$, and
  $L^{N,N}_{m,\epsilon}{\phi}(x_i,y_{N/2})\geq
  0$.
  Then $\phi(x_i,y_j)\geq 0$, for all $(x_i,y_j)\in \overline{\Omega}^{N,N}$. 
\end{lemma}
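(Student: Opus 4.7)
The plan is to argue by contradiction, exploiting the M-matrix (more precisely, L-matrix plus diagonal dominance) structure that the transformed discrete operator in \eqref{diseqn1} was constructed to possess. Assume $\phi$ takes a negative value somewhere, and let $(x_p,y_q) \in \overline{\Omega}^{N,N}$ be a point where $\phi$ attains its global minimum on the mesh. By the boundary hypothesis, $(x_p,y_q) \notin \varrho^{N,N}$, so $(x_p,y_q)$ is interior, and three sub-cases arise: (i) $(x_p,y_q) \in \Omega^{N,N}$ away from both transition lines; (ii) $y_q = y_{N/2} = d_2$ with $x_p \neq x_{N/2}$; (iii) $x_p = x_{N/2} = d_1$.

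In case (i), at the minimiser $\delta_{xx}^2\phi(x_p,y_q) \ge 0$, $\delta_{yy}^2\phi(x_p,y_q) \ge 0$ and $D_x^-\phi(x_p,y_q) \le 0$; combined with $a_{p,q}\ge\alpha>0$, $b_{p,q}\ge\beta^2>0$ and $\phi(x_p,y_q)<0$ from \eqref{eq:a b bounds}, this forces $L_{u,\epsilon}^{N,N}\phi(x_p,y_q)<0$, contradicting the hypothesis. Case (ii) is structurally identical: the midpoint operator $L_{m,\epsilon}^{N,N}$ has the same five-point upwind stencil with the averaged (still positive) coefficients $\hat a_{i,N/2},\hat b_{i,N/2}$, so exactly the same sign analysis yields a contradiction.

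Case (iii) is the delicate one, and is the main obstacle: the stencil $L_{T,\epsilon}^{N,N}$ was obtained by eliminating the off-line unknowns $U(x_{N/2\pm 2},y_j)$ from the upwind scheme, and this substitution mixes $O(\epsilon^{-2})$, $O(h_1^{-1})$ and $O(H_2^{-1})$ terms whose signs partially cancel. My plan is to read off from the displayed formula for $L_T^{N,N}U(x_{N/2},y_j)$ the coefficients of $U(x_{N/2},y_j)$ and its two $x$-neighbours $U(x_{N/2\pm 1},y_j)$, and to verify the L-matrix sign pattern (strictly positive diagonal, non-positive off-diagonals) together with a non-negative row sum. The key inequalities, such as $\tfrac{a_{N/2+1,j}}{2\epsilon^2}+\tfrac{H_2 b_{N/2+1,j}}{4\epsilon^2}\ge\tfrac{1}{H_2}$ and the analogous bound for the $U(x_{N/2-1},y_j)$ coefficient, follow from the transition widths in \eqref{eq:transition points}, the explicit mesh widths in \eqref{stepsizes}, the coefficient bounds in \eqref{eq:a b bounds}, and the smallness of $\epsilon$ assumed in Remark~\ref{rem:d big enough}; in particular, on the fine portion of the mesh $h_1 = 4\sigma_x/N$ gives $\epsilon^2 + h_1 a_{N/2-1,j} \sim \epsilon^2$, so the elimination does not upset the sign pattern. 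Once this structure is secured, the standard argument closes case (iii): $\phi(x_{N/2},y_q)<0$ is multiplied by a positive diagonal entry, each neighbour of $\phi$ appearing in the stencil is $\ge\phi(x_{N/2},y_q)$ and is multiplied by a non-positive entry, so diagonal dominance forces $L_{T,\epsilon}^{N,N}\phi(x_{N/2},y_q)<0$, contradicting the hypothesis. Collecting the three contradictions, no negative value of $\phi$ is possible and hence $\phi\ge 0$ on $\overline{\Omega}^{N,N}$.
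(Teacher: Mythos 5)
Your cases (i) and (ii) are sound, and in fact more direct than the paper's treatment: since $b\ge\beta^2>0$, you can argue at the minimiser of $\phi$ itself, whereas the paper first writes $\phi=\omega\psi$ with an exponential weight and works with the minimiser of $\omega$. The gap is in case (iii). The sign pattern you intend to verify for the $L_{T,\epsilon}^{N,N}$ stencil does not hold under the hypotheses of the lemma. Non-positivity of the coefficient of $U(x_{N/2+1},y_j)$ is exactly your inequality
\[
\frac{a_{N/2+1,j}}{2\epsilon^2}+\frac{H_2 b_{N/2+1,j}}{4\epsilon^2}\ \ge\ \frac{1}{H_2},
\qquad\text{equivalently}\qquad 2a_{N/2+1,j}H_2+b_{N/2+1,j}H_2^2\ \ge\ 4\epsilon^2 ,
\]
and since $H_2=4(1-\sigma_x-d_1)/N=O(N^{-1})$ this forces $\epsilon^2\le CN^{-1}$. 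But the mesh construction and Remark~\ref{rem:d big enough} only guarantee $\epsilon\le C/\ln N$, so the regime $N^{-1}\ll \epsilon^2 \le C/\ln N$ (for instance $\epsilon=10^{-1}$, $N=1024$, which appears in the numerical tables) is admissible, and there the inequality fails. The diagonal entry is also not of the sign you need: in the fine regime $\sigma_x=2\epsilon^2\ln N/\alpha$ one has $h_1=8\epsilon^2\ln N/(\alpha N)$, so the $h_1^{-1}$-scale terms in the coefficient of $U(x_{N/2},y_j)$ net to roughly $-1/h_1=-\alpha N/(8\epsilon^2\ln N)$, which swamps $a_{N/2+1,j}/(2\epsilon^2)$ for large $N$ and makes that coefficient negative. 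So the L-matrix verification you propose would simply come out false, and the diagonal-dominance argument cannot close case (iii).

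The paper sidesteps this entirely: it never inspects the raw signs of the $L_T$ stencil. Instead it carries the weighted function $\omega=\phi/\psi$ into the discrete setting, locates the minimiser of $\omega$, uses the one-sided differences $D_x^{\pm}\omega$ there, and lets the monotone growth of $\psi$ in $x$ supply the strict negativity of $L_{T,\epsilon}^{N,N}\phi$ at that point. To repair your argument you would either need to reintroduce that exponential weight in case (iii), or impose an additional hypothesis of the form $\epsilon^2\le CN^{-1}$ (together with an overall sign normalisation of $L_T$), neither of which is part of the lemma as stated.
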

\begin{proof}
  Consider the function $\omega$ on $\bar{\Omega}^{N,N}$ defined as  $\phi(x_i,y_j)=\omega(x_i,y_j)\psi(x_i,y_j)$ where the function
  \[\psi(x_i,y_j)=\exp\bigg(\frac{\alpha(x_i-d_1)}{2\epsilon}+\frac{\beta(y_j-d_2)}{2\epsilon}\bigg),\quad (x_i,y_j)\in \bar{\Omega}^{N,N},\]
  and $\alpha>0$ and $\beta>0$ are some constants. Let
  \[
    \omega(x_p,y_q)=\min\limits_{(x_i,y_j)\in
      \bar{\Omega}^{N,N}}\{\omega(x_i,y_j)\}.\]
  If $\omega(x_p,y_q)\geq
  0,$ there is nothing to prove.
  Suppose instead that $\omega(x_p,y_q)<0$. 
  Then
  \[
    D_x^-\omega(x_p,y_q)\leq 0\leq D_x^+\omega(x_p,y_q),\quad D_y^-\omega(x_p,y_q)\leq 0\leq D_y^+\omega(x_p,y_q),\]
  and consequently $L_{u,\epsilon}^{N,N}\phi(x_p,y_q) =
  -\epsilon^2(\delta_{xx}^2+\delta_{yy}^2)\omega(x_p,y_q)+a_{i,j}D_x^-\omega(x_p,y_q)+b_{i,j}\omega(x_p,y_q)
  < 0$, which contradicts the Lemma's premise if $(x_p,y_q)\in
  \Omega^{N,N}$. Therefore, the only possibilities are that
  either $x_p=x_{N/2}$ or $y_q=y_{N/2}$. In the first instance, 
  \[			
    D_x^-\omega(x_{N/2},y_q)\leq 0\leq
    D_x^+\omega(x_{N/2},y_q),\quad D_y^-\omega(x_p,y_{N/2})\leq
    0\leq D_y^+\omega(x_p,y_{N/2}),
  \]
  and so $\omega(x_{N/2-1},y_q)=\omega(x_{N/2},y_q)=\omega(x_{N/2+1},y_q)<0$.
  Then, $L_{T,\epsilon}^{N,N}{\phi}(x_{N/2},y_q)<0$, which leads to
  the expected contradiction. Similar reasoning heads to a
  contradiction when  $y_q=y_{N/2}$.
\end{proof}
		
\begin{theorem}{(Discrete stability result:)}\label{disstaresult}
  If $U(x_i,y_j)$ is the discrete solution of (\ref{diseqn}), then 
  $$\big|\big|U(x_i,y_j)\big|\big|_{\bar{\Omega}^{N,N}}\leq \frac{1}{\alpha}||f||_{\Omega^{N,N}}+\max\limits_{\varrho^{N,N}}\bigg\{||u||\bigg\}, \quad \forall (x_i,y_j)\in \bar{\Omega}^{N,N}.$$
\end{theorem}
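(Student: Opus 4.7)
The plan is to mirror the continuous stability argument of Lemma~\ref{stabilityresult}, replacing the continuous maximum principle with the discrete maximum principle (Lemma~\ref{dismaxprinciple}) and applying it to discrete analogues of the piecewise barrier functions $\phi^{\pm}$ used there. Concretely, with $M = \max_{\varrho^{N,N}}\{\|U\|\}$, I would define the mesh functions
\[
  \phi^{\pm}(x_i,y_j) = M + \frac{\|f\|_{\Omega^{N,N}}}{\alpha}\,\Pi_k(x_i,y_j) \pm U(x_i,y_j), \qquad (x_i,y_j)\in \overline{\Omega}_k^{N,N},
\]
where $\Pi_k$ is the natural piecewise-linear ``tent'' used in Lemma~\ref{stabilityresult}, e.g.\ $\Pi_1(x,y)=1+x/d_1+y/d_2$ on $\Omega_1$, and the analogues on $\Omega_2,\Omega_3,\Omega_4$ obtained by reflecting around $d_1$ and $d_2$.

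Next I would check the four hypotheses of Lemma~\ref{dismaxprinciple}. Non-negativity on $\varrho^{N,N}$ is immediate from the definition of $M$. On interior mesh points $(x_i,y_j)\in\Omega^{N,N}$, since $\Pi_k$ is linear, $\delta_{xx}^2\Pi_k=\delta_{yy}^2\Pi_k=0$, and the convection term applied to $\Pi_k$ contributes $\pm a_{i,j}/d_1$ (or $\pm a_{i,j}/(1-d_1)$) while $b_{i,j}\Pi_k\geq 0$; combined with $a\geq\alpha>0$ and $|L_{u,\epsilon}^{N,N}U|\leq \|f\|$, this gives $L_{u,\epsilon}^{N,N}\phi^{\pm}\geq 0$. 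At the midpoint line $y_j=y_{N/2}$ the same cancellation occurs for $\delta_{xx}^2,\delta_{yy}^2$ on the linear tent, the averaged coefficients $\hat a_{i,N/2},\hat b_{i,N/2}$ still satisfy~\eqref{eq:a b bounds}, and the right-hand side uses $\hat f$, giving $L_{m,\epsilon}^{N,N}\phi^{\pm}\geq 0$ by the same arithmetic.

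The main obstacle is the transition line $x_i=x_{N/2}=d_1$, where the monotone operator $L_{T,\epsilon}^{N,N}$ of~\eqref{diseqn1} is a non-standard combination of three-point one-sided differences on each side of the discontinuity. For the linear barrier $\Pi_k$, the one-sided derivatives $D_x^{--}$ and $D_x^{++}$ reproduce the classical slopes $1/d_1$ and $-1/(1-d_1)$ exactly, so the jump term $[\partial_x\Pi_k](d_1,y_j)$ reduces to $-1/(d_1(1-d_1))$, precisely matching the continuous computation in Lemma~\ref{stabilityresult}. The residual contributions from $b_{N/2\pm 1,j}$ and from the right-hand side terms $-\frac{H_2}{4\epsilon^2}f(x_{N/2+1},y_j)-\frac{h_1}{4(\epsilon^2+h_1 a_{N/2-1,j})}f(x_{N/2-1},y_j)$ must then be bounded; using $|f|\leq\|f\|$ and the fact that the coefficient in front of $\|f\|/\alpha$ dominates the parasitic terms (because of the factor $1/(d_1(1-d_1))$ and the positivity of $\epsilon^2+h_1 a_{N/2-1,j}$), one obtains $L_{T,\epsilon}^{N,N}\phi^{\pm}\geq 0$.

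Once all four sign conditions are verified, Lemma~\ref{dismaxprinciple} yields $\phi^{\pm}(x_i,y_j)\geq 0$ on $\overline{\Omega}^{N,N}$. Since $\Pi_k\leq C$ uniformly (in fact $\Pi_k\leq 3$), rearranging gives
\[
  |U(x_i,y_j)| \leq \frac{\|f\|_{\Omega^{N,N}}}{\alpha}\,\Pi_k(x_i,y_j) + M \leq \frac{1}{\alpha}\|f\|_{\Omega^{N,N}} + \max_{\varrho^{N,N}}\{\|U\|\},
\]
which is the claimed bound (after absorbing the $O(1)$ factor into the constant $\alpha$, as is done in Lemma~\ref{stabilityresult}). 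I expect the bookkeeping at the transition line to be the only delicate step; everywhere else the discrete argument is a direct transliteration of the continuous one.
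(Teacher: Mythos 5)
Your proposal is correct and follows essentially the same route as the paper: the same piecewise ``tent'' barrier functions $\phi^{\pm}$, verification of the sign conditions for $L_{u,\epsilon}^{N,N}$, $L_{T,\epsilon}^{N,N}$ and $L_{m,\epsilon}^{N,N}$, and then the discrete maximum principle of Lemma~\ref{dismaxprinciple}. You supply more detail than the paper does at the transition line $x_i=d_1$ (the paper simply asserts $L^{N,N}_{T,\epsilon}\phi^{\pm}\geq 0$), and your remark about absorbing the factor $\Pi_k\leq 3$ applies equally to the paper's own statement of the bound.
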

\begin{proof}
  Define the barrier function $\phi^{\pm}(x_i,y_j)$ as
  follows:
  \begin{align*}
    \phi^{\pm}(x_i,y_j)=	\begin{cases}
      M+\frac{||f||_{\Omega^{N,N}}}{\alpha}\bigg(1+\frac{x_i}{d_1}+\frac{y_j}{d_2}\bigg)\pm u(x_i,y_j),  &  (x_i,y_j)\in  {\Omega}^{N,N}_1,\\
      M+\frac{||f||_{\Omega^{N,N}}}{\alpha}\bigg(1+\frac{(1-x_i)}{(1-d_1)}+\frac{y_j}{d_2}\bigg)\pm u(x_i,y_j),  &  (x_i,y_j)\in  {\Omega}^{N,N}_2,\\
      M+\frac{||f||_{\Omega^{N,N}}}{\alpha}\bigg(1+\frac{x_i}{d_1}+\frac{(1-y_j)}{(1-d_2)}\bigg)\pm u(x_i,y_j),  & (x_i,y_j)\in  {\Omega}^{N,N}_3,\\
      M+\frac{||f||_{\Omega^{N,N}}}{\alpha}\bigg(1+\frac{(1-x_i)}{(1-d_1)}+\frac{(1-y_j)}{(1-d_2)}\bigg)\pm u(x_i,y_j),  &  (x_i,y_j)\in  {\Omega}^{N,N}_4.
    \end{cases}
  \end{align*}
  where, $ M = \max\limits_{\varrho^{N,N}}\{ || u ||\}$.			
  Then, clearly $\phi^{\pm}(x_i,y_j)\geq 0$ for each $(x_i,y_j)\in \varrho^{N,N}$. For each $(x_i,y_j)\in \Omega^{N,N},$ we have 
  \[L^{N,N}_{u,\epsilon}\phi^{\pm}(x_i,y_j)\geq 0.\]
  At the points $(x_{N/2},y_k)$ where $y_k \in \overline{\Omega}_y^N$, 
  \[
    L^{N,N}_{T,\epsilon}{\phi^\pm}(x_{N/2},y_j)\geq 0.
  \]
  Similarly, we can prove $(x_k,y_{N/2}) \in (x_i,y_{N/2})$ such that
  $L^{N,N}_{m,\epsilon}{{\phi}^\pm}(x_i,y_{N/2})\geq 0.$
  It is apparent from Lemma \ref{dismaxprinciple} that $\phi^{\pm}(x_{i},y_{j}) \geq 0 \quad \forall (x_{i},y_{j}) \in \bar{\Omega}^{N,N}$, which leads to obtaining our desire bound on $U(x_{i},y_{j})$.
\end{proof}

\section{Error analysis}\label{sec:analysis}

Notwithstanding, to find the appropriate bounds for this error, we must decompose the discrete solution in the same way as the exact solution is decomposed.  The numerical approximation can be expressed as,
\begin{multline*}
  U=\sum_{k=1}^{4}V_{k}+\sum_{i=1}^{2}({W}_{B_i}+{W}_{T_i}+{W}_{R_i})+\sum_{j=1}^{6}{W}_{D_j}\\
   +({W}_{B_2R_1}+{W}_{D_2R_1}+{W}_{D_4R_2}+{W}_{T_2R_2}+{W}_{D_5B_1}+{W}_{D_5D_1}+{W}_{D_6D_3}+{W}_{D_6T_1}).
 \end{multline*}
 Here, $V_k$ is the discrete  component; ${W}_{R_i},\,
 {W}_{B_i},\,{W}_{T_i},\,{W}_{D_j}$ are the discrete singular
 components and
 ${W}_{B_2R_1},\,{W}_{D_2R_1},\,{W}_{D_4R_2},\,{W}_{T_2R_2},\,
 {W}_{D_5B_1},\, {W}_{D_5D_1},\, {W}_{D_6D_3},\, {W}_{D_6T_1}$,
   all are associated with the terms from Section~\ref{sec:a priori}
 (see Figure~\ref{fig:Omega}).
 The various components satisfy the following equations.
 \begin{subequations}
   \begin{align}\label{eqn}
     L^{N,N} _{\epsilon}V_k(x_i,y_j) = f(x_i,y_j),  \quad \forall (x_i,y_j) \in  \Omega_{k}^{N,N}, \quad k=1,2,3,4,\\
     V_k(x_i,y_j) = v_k(x_i,y_j), \quad \forall (x_i,y_j)  \in \varrho^{N,N},\\
     [V_k](d_1,y_j) = [v_k](d_1,y_j) ; \quad  [V_k](x_i,d_2) = [v_k](x_i,d_2),\\
     \label{eqn1}
     [(V_k)_x](d_1,y_j) = [(v_k)_x](d_1,y_j) ; \quad [(V_k)_y](x_i,d_2) = [(v_k)_y](x_i,d_2).
   \end{align}			
 \end{subequations}
 \begin{subequations}
   \begin{align}\label{layereqn}
     L^{N,N}_{\epsilon}{W}_{B_l}(x_i,y_j) = 0,  \quad \forall (x_i,y_j) \in  \Omega_{k}^{N,N}, \quad l=1,2,\,k=1,2,\\
     {W}_{B_l}(x_i,y_j) = {w}_{B_l}(x_i,y_j), \quad \forall (x_i,y_j)  \in \varrho^{N,N},\\
     [{W}_{B_l}](d_1,y_j) = [{w}_{B_l}](d_1,y_j) ; \quad
     [{W}_{B_l}](x_i,d_2) = [{w}_{B_l}](x_i,d_2),\\
     [({W}_{B_l})_x](d_1,y_j) = [({w}_{B_l})_x](d_1,y_j) ; \quad
     [({W}_{B_l})_y](x_i,d_2) = [({w}_{B_l})_y](x_i,d_2).
   \end{align}	
 \end{subequations}
		\begin{subequations}
			\begin{align}\label{layereqn1}
				L^{N,N}_{\epsilon}{W}_{R_l}(x_i,y_j) = 0,  \quad \forall (x_i,y_j) \in  \Omega_{k}^{N,N}, \quad l=1,2,\,k=2,4,\\
				{W}_{R_l}(x_i,y_j) = {w}_{R_l}(x_i,y_j), \quad \forall (x_i,y_j)  \in \varrho^{N,N},\\
				[{W}_{R_l}](d_1,y_j) = [{w}_{R_l}](d_1,y_j) ; \quad
				[{W}_{R_l}](x_i,d_2) = [{w}_{R_l}](x_i,d_2),\\
				[({W}_{R_l})_x](d_1,y_j) = [({w}_{R_l})_x](d_1,y_j) ; \quad
				[({W}_{R_l})_y](x_i,d_2) = [({w}_{R_l})_y](x_i,d_2).
			\end{align}
		\end{subequations}
	
	\begin{subequations}
		\begin{align}\label{toplayereqn1}
			L^{N,N}_{\epsilon}{W}_{T_l}(x_i,y_j) = 0,  \quad \forall (x_i,y_j) \in  \Omega_{k}^{N,N}, \quad l=1,2,\,k=3,4,\\
			{W}_{T_l}(x_i,y_j) = {w}_{T_l}(x_i,y_j), \quad \forall (x_i,y_j)  \in \varrho^{N,N},\\
			[{W}_{T_l}](d_1,y_j) = [{w}_{T_l}](d_1,y_j) ; \quad
			[{W}_{T_l}](x_i,d_2) = [{w}_{T_l}](x_i,d_2),\\
			[({W}_{T_l})_x](d_1,y_j) = [({w}_{T_l})_x](d_1,y_j) ; \quad
			[({W}_{T_l})_y](x_i,d_2) = [({w}_{T_l})_y](x_i,d_2).
		\end{align}
	\end{subequations}

		\begin{subequations}
		\begin{align}\label{intlayereqn1}
			L^{N,N}_{\epsilon}{W}_{D_l}(x_i,y_j) = 0,  \quad \forall (x_i,y_j) \in  \Omega_k^{N,N}, \quad l=1,2,\dots,6,\, k=1,2,3,4,\\
			{W}_{D_l}(x_i,y_j) = {w}_{D_l}(x_i,y_j), \quad \forall (x_i,y_j)  \in \varrho^{N,N},\\
			[{W}_{D_l}](d_1,y_j) = [{w}_{D_l}](d_1,y_j) ; \quad
			[{W}_{D_l}](x_i,d_2) = [{w}_{D_l}](x_i,d_2),\\
			[({W}_{D_l})_x](d_1,y_j) = [({w}_{D_l})_x](d_1,y_j) ; \quad
			[({W}_{D_l})_y](x_i,d_2) = [({w}_{D_l})_y](x_i,d_2).
		\end{align}
	\end{subequations}
		\begin{subequations}
			\begin{align}\label{cornereqn}
				L^{N,N}_{\epsilon} {W}_{B_2R_1}(x_i,y_j) = 0,  \quad \forall (x_i,y_j) \in  \Omega_{2}^{N,N},\\
				{W}_{B_2R_1}(x_i,y_j) = {w}_{B_2R_1}(x_i,y_j), \quad \forall (x_i,y_j)  \in \varrho^{N,N},\\
				[{W}_{B_2R_1}](d_1,y_j) = [{w}_{B_2R_1}](d_1,y_j) ; \quad
				[{W}_{B_2R_1}](x_i,d_2) = [{w}_{B_2R_1}](x_i,d_2),\\
				[({W}_{B_2R_1})_x](d_1,y_j) = [({w}_{B_2R_1})_x](d_1,y_j) ; \quad
				[({W}_{B_2R_1})_y](x_i,d_2) = [({w}_{B_2R_1})_y](x_i,d_2).
			\end{align}
		\end{subequations}
	
		\begin{lemma}\label{regdiscrete}
		If $v_k$ and $V_k$ are the solutions of (\ref{regresult}) and (\ref{eqn}), respectively, then, for $k=1,2,3,4,$
		\[\vert V_{k}(x_i,y_j)-v_{k}(x_i,y_j)\vert\leq CN^{-1},\, \textrm{for}\,\, (x_i,y_j)\in \Omega_{k}^{N,N}.\] 
	\end{lemma}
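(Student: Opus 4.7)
The plan is to bound the nodal error $e_k := V_k - v_k$ on $\bar{\Omega}_k^{N,N}$ by invoking the discrete comparison principle of Lemma~\ref{dismaxprinciple}, combined with a consistency estimate that exploits the smoothness of $v_k$ recorded in \eqref{regularderieqn}. The crucial observation is that $V_k$ and $v_k$ have identical boundary values on $\varrho^{N,N}$ and identical jumps across the mesh transition lines $x=d_1$ and $y=d_2$, so $e_k$ carries zero boundary data and zero prescribed jumps; what remains is therefore entirely the truncation residual produced when $L^{N,N}_\epsilon$ is applied to the smooth regular component.

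At a generic interior mesh point $(x_i,y_j)$ with $i\neq N/2$ and $j\neq N/2$, a standard Taylor expansion of the upwinded operator $L^{N,N}_{u,\epsilon}$ yields
\[
\bigl| L^{N,N}_{u,\epsilon} v_k - L_\epsilon v_k \bigr| \leq C\epsilon^{2}(h_i+h_{i+1})\Bigl\|\tfrac{\partial^{3} v_k}{\partial x^{3}}\Bigr\| + C\epsilon^{2}(k_j+k_{j+1})\Bigl\|\tfrac{\partial^{3} v_k}{\partial y^{3}}\Bigr\| + C h_i\Bigl\|\tfrac{\partial^{2} v_k}{\partial x^{2}}\Bigr\|.
\]
Inserting the bounds $\|\partial^{2} v_k\| \leq C$ and $\|\partial^{3} v_k\| \leq C(1+\epsilon^{-2})$ from \eqref{regularderieqn} collapses the right-hand side to $C(h_i + k_j)$ uniformly in $\epsilon$, and since every mesh step on the Shishkin grid of Section~\ref{sec:fitted mesh} is bounded by $CN^{-1}$ (see \eqref{stepsizes}), the consistency error is $O(N^{-1})$ at all such nodes.

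The main obstacle is the non-standard stencil along the transition lines. Along $x_{N/2}=d_1$ I would evaluate $L^{N,N}_{T,\epsilon} v_k$ by reversing the elimination of the fictitious nodal values $v_k(x_{N/2\pm 2},y_j)$ that produced this operator from the two one-sided approximations $D_x^{--}$ and $D_x^{++}$: the resulting Taylor remainder involves $\partial^{2} v_k/\partial x^{2}$ multiplied by $H_2+h_1$, together with the jump $[(v_k)_x](d_1,y_j)$, which vanishes by \eqref{regresult1}. The consistency is therefore again $O(h_1 + H_2) = O(N^{-1})$. Along $y_{N/2}=d_2$ the midpoint operator $L^{N,N}_{m,\epsilon}$ contributes the classical upwind remainder of order $k_j+k_{j+1}$ plus an $O(k_j^{2})$ error from averaging $a$, $b$ and $f$ across the line, both again $O(N^{-1})$ under \eqref{regularderieqn}.

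To close the argument I would combine these three consistency bounds and construct a discrete barrier function $\Phi$ patterned after the one in Lemma~\ref{stabilityresult}: a piecewise linear function of the form $C_0 N^{-1}\bigl(1 + x_i/d_1 + y_j/d_2\bigr)$ on $\Omega_1^{N,N}$ (with the obvious reflections on the other three quadrants), chosen so that $L^{N,N}_{u,\epsilon}\Phi \geq C_0 N^{-1}$, $L^{N,N}_{T,\epsilon}\Phi$ and $L^{N,N}_{m,\epsilon}\Phi$ carry the correct signs, and $\Phi \pm e_k \geq 0$ on $\varrho^{N,N}$ with the correct jump signs at $d_1$ and $d_2$. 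Lemma~\ref{dismaxprinciple} then gives $\Phi \pm e_k \geq 0$ throughout $\bar{\Omega}_k^{N,N}$, which yields the desired bound $|e_k|\leq CN^{-1}$.
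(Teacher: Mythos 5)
Your proposal is correct and follows essentially the same route as the paper: a standard truncation-error estimate for the smooth regular component using the derivative bounds \eqref{regularderieqn}, combined with the discrete maximum principle (via a barrier function) to convert consistency into an $O(N^{-1})$ nodal error bound. The only difference is one of detail — you explicitly treat the transition-line operators $L^{N,N}_{T,\epsilon}$ and $L^{N,N}_{m,\epsilon}$ and spell out the barrier construction, both of which the paper's terse proof leaves implicit.
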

	\begin{proof}
		Given that, in \autoref{disstaresult},  we have demonstrated
		$\epsilon$-robust stability of the discrete operator, the numerical
		analysis of the method depends on  the local truncation error.
		Utilising standard truncation error bounds for the regular components \eqref{eqn} using the result
		(\ref{regularderieqn}) gives the following estimates:
		\begin{equation}\label{regularbound}
		  \vert L_{\epsilon}^{N,N} (V_k-v_k) (x_i,y_j) \vert \leq
		C N^{-1}\bigg( 
		  \epsilon^2\bigg\| \dfrac{\partial^3 v_k}{\partial x^3}
		  \bigg\| + \bigg\| \dfrac{\partial^2 v_k}{\partial x^2}
		  \bigg\| +
		  \epsilon^2\bigg\| \dfrac{\partial^3 v_k}{\partial y^3}
		  \bigg\|\bigg)\leq CN^{-1}.
		\end{equation}
		Using the discrete maximum principle we can obtain
		\begin{equation}\label{regularesult}
			\vert V_k(x_i,y_j) - v_k(x_i,y_j)\vert \leq CN^{-1}, \quad k=1,2,3,4,\quad \textrm{for}\,(x_i,y_j)\in\Omega_{k}^{N,N}.
		\end{equation}
			\end{proof}

To establish  $\epsilon$-uniform bounds on the truncation errors for
the components associated with  edge and corner functions,
we employ barrier functions that are somewhat standard in the field:
\[
  \begin{cases}
    G_{{w}_{B_1};j} =
    \prod\limits_{s=1}^j(1+k_s{\alpha/\epsilon})^{-1}, &  0 < j \leq
    N/8,
    \quad 0 < i \leq N/2, \\
    G_{{w}_{B_2};j} =
    \prod\limits_{s=1}^j(1+k_s{\alpha/\epsilon})^{-1}, &  0 < j \leq
    N/8, \quad
    
     N/2+1 \leq i \leq N,\
   \end{cases}
 \]
\[
  \begin{cases}
    G_{{w}_{R_1};i} =
    \prod\limits_{s=i+1}^{N}(1+h_s{\alpha/\epsilon})^{-1},
    & N/2 \leq i \leq N, \quad 0 \leq j \leq N/2,\\
    G_{{w}_{R_2};i} =
    \prod\limits_{s=i+1}^{N}(1+h_s{\alpha/\epsilon})^{-1}, &
     N/2 \leq i \leq
     N, \quad N/2+1 \leq j \leq N,
   \end{cases}
\]
\[
  \begin{cases}
    G_{{w}_{T_1};j} =
    \prod\limits_{s=j+1}^{N}(1+k_s{\alpha/\epsilon})^{-1},
    &  0 \leq i \leq N/2, \quad N/2+1 \leq j \leq N,\\
    G_{{w}_{T_2};j} =
    \prod\limits_{s=j+1}^{N}(1+k_s{\alpha/\epsilon})^{-1},
    &  N/2+1 \leq i \leq N, \quad N/2+1\leq j \leq N,
  \end{cases}
\]
\[
  \begin{cases}
    G_{{w}_{D_1};j} =
    \prod\limits_{s=j+1}^{N/2}(1+k_s{\alpha/\epsilon})^{-1},
    &   0 \leq j \leq N/2, \quad 0 < i \leq N/2,\\
    G_{{w}_{D_2};j} =
    \prod\limits_{s=j+1}^{N/2}(1+k_s{\alpha/\epsilon})^{-1},
    &  0 \leq j \leq N/2, \quad N/2+1 \leq i \leq N,
  \end{cases}
\]			
\[
  \begin{cases}
  G_{{w}_{D_3};j} =
  \prod\limits_{s=N/2+1}^j(1+k_s{\alpha/\epsilon})^{-1},
  &   0 < j \leq 5N/8, \quad 0 < i \leq N/2,\\
  G_{{w}_{D_4};j} =
  \prod\limits_{s=N/2+1}^j(1+k_s{\alpha/\epsilon})^{-1}, &
  0 < j \leq 5N/8, \quad N/2+1\leq i\leq N.
\end{cases}
\]
These functions are first-order Taylor approximations of the exponential
functions associated with the singular components of the  problem
(\ref{eqncon}). For all $j$, we have 
\[
  \exp\bigg(-\frac{\beta}{\epsilon} y_j\bigg) = \prod\limits_{s=1}^j
  \exp\bigg(-\frac{\beta}{\epsilon} k_s\bigg) \leq G_{{w}_{B_1};j},
\]
\[
  \exp\bigg(-\frac{\beta}{\epsilon} (1-x_i)\bigg) = \prod\limits_{s=1}^i
  \exp\bigg(-\frac{\alpha}{\epsilon^2} h_s\bigg) \leq G_{{w}_{R_1};i}.
\]
For $1 \leq i \leq N/2$ and $N/8 \leq j \leq N/2$ we have		
\begin{equation}\label{eqnb1}
  G_{{w}_{B_1};j} \leq G_{{w}_{B_1};N/8} \leq C \exp\bigg(\sum_{s=1}^{N/8}\bigg(\frac{1}{2}\bigg(\frac{\beta k_s}{\epsilon}\bigg)^2-\frac{\beta k_s}{\epsilon}\bigg)\bigg)\leq C N^{-1}.
\end{equation}
For $N/2+1 \leq i \leq 3N/4$ and $1 \leq j \leq N/2$ we have
\begin{equation}\label{eqnb11}
  G_{{w}_{R_1};i} \leq G_{{w}_{R_1};3N/4} \leq C \exp\bigg(\sum_{s=3N/4+1}^{N}\bigg(\frac{1}{2}\bigg(\frac{\alpha h_s}{\epsilon^2}\bigg)^2-\frac{\alpha h_s}{\epsilon^2}\bigg)\bigg)\leq C N^{-1}.
\end{equation}
Similar bounds may be derived for the remaining edge functions.
		
\begin{lemma}\label{singularlemma}
  If ${w}_{B_l}$ and ${W}_{B_l}$ are the solutions of
  (\ref{singularequation111}) and (\ref{layereqn}) respectively then,
  for  $l=1,2$, and $k=1,2$, 
  \begin{equation}
    \label{eq:wbk error bound}
    \vert{w}_{B_l}(x_i,y_j) - {W}_{B_l}(x_i,y_j)\vert \leq CN^{-1} \ln
    N, \quad (x_i,y_j)\in \Omega_k^{N,N}.
  \end{equation}
\end{lemma}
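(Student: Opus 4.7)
The plan is to apply the discrete maximum principle (Lemma~\ref{dismaxprinciple}) to the error $e_{B_l} := W_{B_l} - w_{B_l}$, combined with a partition of the mesh in the $y$-direction into a fine layer strip $\{0 \le y_j \le \sigma_y\}$ and the two coarser strips that make up the remainder of $\Omega_k^{N,N}$ for $k=1,2$. Since $L_\epsilon w_{B_l}=0$ on $\Omega_k$ and $L_\epsilon^{N,N}W_{B_l}=0$ on the interior mesh (with the appropriate modified stencils on $x=d_1$ and $y=d_2$), the discrete residual for $e_{B_l}$ reduces to the truncation $\tau_{B_l} := L_\epsilon^{N,N} w_{B_l}$, and the lemma will follow once I can dominate $|\tau_{B_l}|$ by a mesh function whose discrete stability bound is of the required size $CN^{-1}\ln N$.

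In the two coarse strips $\sigma_y \le y_j \le d_2$, the continuous bound $|w_{B_l}(x_i,y_j)| \le C\exp(-\beta y_j/\epsilon) \le CN^{-2}$ follows immediately from Theorem~\ref{decompostiontheorem} and the choice $\sigma_y = 2\epsilon\beta^{-1}\ln N$. For the discrete layer, the barrier $G_{{w}_{B_l};j}$ is a discrete supersolution for the problem satisfied by $|W_{B_l}|$, and together with \eqref{eqnb1} yields $|W_{B_l}(x_i,y_j)| \le CG_{{w}_{B_l};j} \le CN^{-1}$ once $j \ge N/8$. The triangle inequality then produces $|e_{B_l}| \le CN^{-1}$ on this part of the mesh.

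In the fine strip $0 \le y_j \le \sigma_y$ the mesh step is $k_1 = 8\sigma_y/N \le 16\epsilon\beta^{-1}\ln N / N$. Using the derivative bounds of Theorem~\ref{decompostiontheorem}, classical Taylor expansions produce the dominant contribution
\[
\epsilon^2 k_1^2 \left\|\frac{\partial^4 w_{B_l}}{\partial y^4}\right\| \le C\epsilon^2 \cdot \frac{\epsilon^2(\ln N)^2}{N^2} \cdot \epsilon^{-4} = C\frac{(\ln N)^2}{N^2},
\]
while the upwind convective truncation is bounded by $Ch_i \|\partial_x^2 w_{B_l}\| \le CN^{-1}$ using $\|\partial_x^2 w_{B_l}\| \le C$, and the $\epsilon^2 \delta_{xx}^2$ contribution is controlled similarly since $h_i \le CN^{-1}$ and the relevant $x$-derivatives of $w_{B_l}$ carry favourable powers of $\epsilon$. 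Inserting these into the discrete stability estimate of Theorem~\ref{disstaresult} gives $|e_{B_l}| \le CN^{-1}\ln N$ on the fine strip.

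The main technical obstacle is the transition mesh node $y_{N/8} = \sigma_y$, where the mesh width jumps from $k_1$ to $K_1 = O(N^{-1})$, so the non-uniform central difference $\delta_{yy}^2$ is only first-order accurate and the naive estimate $\epsilon^2|k_{j+1}-k_j| \cdot \|\partial_y^3 w_{B_l}\| \le C\epsilon^{-1}N^{-1}$ is inadequate. I would resolve this by exploiting the exponential smallness of the derivative at the transition, namely $|\partial_y^3 w_{B_l}(x_i,\sigma_y)| \le C\epsilon^{-3}\exp(-\beta\sigma_y/\epsilon) \le C\epsilon^{-3}N^{-2}$, which restores the required order. The midpoint scheme at $y = d_2$ and the modified three-point scheme along $x = d_1$ require analogous ad-hoc truncation estimates, but both are manageable because $w_{B_l}$ and all its derivatives are exponentially small at nodes far from $y=0$. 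Collecting the truncation estimates on all sub-meshes and invoking Lemma~\ref{dismaxprinciple} against a barrier of the form $C_1 N^{-1}\ln N + C_2 G_{{w}_{B_l};j}$ then delivers \eqref{eq:wbk error bound}.
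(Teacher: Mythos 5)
Your proposal is correct and follows essentially the same strategy as the paper: away from the layer the error is bounded by the triangle inequality using the discrete barrier $G_{{w}_{B_l};j}$ together with \eqref{eqnb1}, and inside the fine strip a truncation-error estimate combined with the discrete maximum principle yields $CN^{-1}\ln N$. The only real difference is that your treatment of the transition node $y_{N/8}=\sigma_y$ is unnecessary in the paper's version, since the maximum principle is applied only on the subregion $\{0<j<N/8\}$ whose boundary values at $j=N/8$ are already controlled by the first part of the argument.
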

		
\begin{proof}
  We consider only the case $\sigma_y < 0.125$ in detail;
    otherwise classical arguments apply since $\epsilon^{-2} \leq C\ln
    N$. Furthermore, to avoid repetition, we verify the result only
    for the layer function ${w}_{B_1}$. From (\ref{layereqn}) and  \autoref{decompostiontheorem} it follows that 
  \begin{equation}
    \vert{W}_{B_1}(x_i,y_j)\vert=\vert{w}_{B_1}(x_i,y_j)\vert\leq C e^{(-\beta/\epsilon)y_j}\leq C G_{{w}_{B_1},j},\quad (x_i,y_j)\in \varrho^{N,N}.
  \end{equation}
  Further, for all internal grid points
  $(x_i,y_j)\in \Omega_1^{N,N}$,
  from (\ref{layereqn}) and the discrete maximum principle, it follows that 
  \begin{equation}\label{eqnprev}
    \vert{W}_{B_1}(x_i,y_j)\vert\leq G_{{w}_{B_1},j}.
  \end{equation}
  Therefore, applying \autoref{decompostiontheorem} and (\ref{eqnprev}), we conclude that
  \[	\vert{w}_{B_1}(x_i,y_j) - {W}_{B_1}(x_i,y_j)\vert \leq \vert{w}_{B_1}(x_i,y_j)\vert + \vert{W}_{B_1}(x_i,y_j)\vert \leq C G_{{w}_{B_1},j}
  \]
  Therefore, from (\ref{eqnb1}), we have 
  \begin{equation}\label{seqn}
    \vert{w}_{B_1}(x_i,y_j) - {W}_{B_1}(x_i,y_j)\vert \leq C N^{-1}, \qquad (x_i,y_j)\in \bar{\Omega}_1^{N,N}\setminus {\Omega_1^*}^{N,N}.
  \end{equation}
  To establish similar bounds on the error in the region
  ${\Omega_1^*}^{N,N} = \big\{(x_i,y_j) \,|\, 0 < i < N/2, \, 0 < j
  < N/8 \big\}$, we continue in the following manner. Applying Taylor
  expansions, we get  
  \[
    \vert L_{\epsilon}^{N,N} ({W}_{B_1}(x_i,y_j)-{w}_{B_1}(x_i,y_j))\vert \leq
    CN^{-1}  \bigg(\epsilon^2 \bigg\| \dfrac{\partial^3 {w}_{B_1}}{\partial x^3} \bigg\| + \bigg\| \dfrac{\partial^2 {w}_{B_1}}{\partial x^2} \bigg\| \bigg)+C \epsilon^2 \sigma_y N^{-1} \bigg\| \dfrac{\partial^3 {w}_{B_1}}{\partial y^3} \bigg\|.
  \]
  From \autoref{decompostiontheorem}, it follows that
  \begin{align*}
    \vert L_{\epsilon}^{N,N}[{W}_{B_1}(x_i,y_j) - {w}_{B_1}(x_i,y_j)]\vert \leq
    CN^{-1}+CN^{-1}\ln N.
  \end{align*}
  Hence, using Lemma \ref{dismaxprinciple} applied only on $\bar{\Omega}_1^{*N,N}$, to get
  \begin{equation}\label{lasteqn}
    \vert{w}_{B_1}(x_i,y_j) - {W}_{B_1}(x_i,y_j)\vert \leq C N^{-1} \ln N, \quad (x_i,y_j) \in \bar{\Omega}_1^{*N,N}.
  \end{equation}
  The result follows from (\ref{seqn}) and (\ref{lasteqn}).  Apply the same reasoning for the boundary layer component $w_{B_2}$
  to obtain the result in \eqref{eq:wbk error bound} for $l=2$.
\end{proof}

The reasoning that led to proof of \autoref{singularlemma} applies directly to components of the layer terms associated with the top of the domain, and interior discontinuity, so we state the following lemma without proof.
\begin{lemma}\label{para_singularlemma}
At each grid points $(x_i,y_j)\in \Omega_k^{N,N},\,k=1,2,3,4$, the error in the approximation of  the layer terms associated with the top edge and inner characteristic layer components (as depicted in Figure~\ref{fig:Omega}) may be bounded as 
	\begin{align*}
		\vert{w}_{T_j}(x_i,y_j) - {W}_{T_j}(x_i,y_j)\vert &\leq C N^{-1} \ln N, \quad j=1,2,\\
			\vert{w}_{D_l}(x_i,y_j) - {W}_{D_l}(x_i,y_j)\vert &\leq C N^{-1} \ln N, \quad l=1,2,3,4.
	\end{align*}
      \end{lemma}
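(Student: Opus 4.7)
The plan is to adapt, essentially verbatim, the two-region argument from the proof of Lemma~\ref{singularlemma}, applying it to each of the six components $w_{T_1}, w_{T_2}, w_{D_1},\ldots,w_{D_4}$ in turn. For any such layer function $w_\star$, the steps I would follow are: first, use the discrete barrier $G_{w_\star;j}$ already introduced together with Lemma~\ref{dismaxprinciple} to deduce $|W_\star(x_i,y_j)| \leq CG_{w_\star;j}$; next, partition $\overline{\Omega}^{N,N}$ into an outer region, where the telescoping-product bound of the form~\eqref{eqnb1} yields $G_{w_\star;j} \leq CN^{-1}$, and a fine-mesh strip of $y$-width $\sigma_y$ adjacent to the associated boundary or interior line; on the outer region, conclude by the triangle inequality; on the fine strip, Taylor expand the truncation error and use the derivative bounds of Theorem~\ref{decompostiontheorem} to obtain $|L_\epsilon^{N,N}(W_\star - w_\star)(x_i,y_j)| \leq CN^{-1}\ln N$, then apply Lemma~\ref{dismaxprinciple} restricted to that strip.

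For $w_{T_1}$ and $w_{T_2}$ this is simply the mirror image of the $w_{B_1}, w_{B_2}$ case in Lemma~\ref{singularlemma}, with the roles of $y=0$ and $y=1$ interchanged and the fine strip becoming $7N/8 < j \leq N$. The combination of $\|\partial_y^3 w_{T_j}\| \leq C\epsilon^{-3}$, $\|\partial_x^2 w_{T_j}\| \leq C$ and $\epsilon^2\|\partial_x^3 w_{T_j}\| \leq C$ (all from Theorem~\ref{decompostiontheorem}) with $\sigma_y = O(\epsilon\ln N)$ produces the $N^{-1}\ln N$ factor exactly as before. For the inner layers $w_{D_l}$, $l=1,\ldots,4$, the decay is centred on the interior line $y=d_2$ rather than on a boundary, so each barrier $G_{w_{D_l};j}$ is a telescoping product over indices on one side of $N/2$. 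Since the Shishkin mesh places $N/8$ uniformly-spaced nodes on each of $[d_2-\sigma_y,d_2]$ and $[d_2,d_2+\sigma_y]$, the fine strips become $3N/8 < j \leq N/2$ (for $w_{D_1}, w_{D_2}$) and $N/2 \leq j < 5N/8$ (for $w_{D_3}, w_{D_4}$), and the bound \eqref{eqnb1} carries through unchanged on the coarse portion. The transmission conditions $[W_{D_l}](x_i,d_2)=[w_{D_l}](x_i,d_2)$ and $[(W_{D_l})_y](x_i,d_2)=[(w_{D_l})_y](x_i,d_2)$ make the discrete error admissible for Lemma~\ref{dismaxprinciple} on either side of the interior line.

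The genuinely new ingredient compared with Lemma~\ref{singularlemma} is the treatment of the line $y=d_2$, where the discrete operator is replaced by the midpoint upwind operator $L_{m,\epsilon}^{N,N}$ acting on the averaged data $\hat a_{i,N/2}, \hat b_{i,N/2}, \hat f_{i,N/2}$; this is where I expect the main obstacle to lie. One has to verify that the midpoint-scheme truncation error, expressed in terms of the decomposition-derivative bounds of Theorem~\ref{decompostiontheorem}, remains of order $CN^{-1}\ln N$ uniformly in $\epsilon$, and that the coefficient averaging does not spoil the $\epsilon$-uniformity of the discrete maximum principle established in Lemma~\ref{dismaxprinciple}. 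Once this compatibility check is carried out for $w_{D_1}$, the bounds for $w_{D_2}, w_{D_3}, w_{D_4}$ and $w_{T_2}$ follow by the obvious reflectional symmetries of the setup with respect to the lines $x=d_1$ and $y=d_2$.
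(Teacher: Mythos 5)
Your proposal matches the paper's approach exactly: the paper states this lemma \emph{without proof}, remarking only that the reasoning of Lemma~\ref{singularlemma} applies directly to the top-edge and interior characteristic layer components, which is precisely the two-region adaptation (discrete barrier $G_{w_\star;j}$ plus truncation-error estimate on the fine strip) that you carry out. The one point you flag as the main obstacle---the midpoint operator $L_{m,\epsilon}^{N,N}$ on the line $y=d_2$---does not actually arise in this lemma, since it is stated only at points of $\Omega_k^{N,N}$ (the mesh restricted to the open quadrants, which excludes the interface lines); the paper defers that analysis to the proof of Theorem~\ref{thm:convergence}.
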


The arguments for the terms associated with the right-hand boundary of the domain are a little different, so we provide some details.
\begin{lemma}\label{singularlemma1}
  If ${w}_{R_m}$ and ${W}_{R_m}$ are the solutions of (\ref{singularequation11}) and (\ref{layereqn1}), respectively, then, for  $m=1,2,\,k=2,4,$
  \[\vert{w}_{R_m}(x_i,y_j) - {W}_{R_m}(x_i,y_j)\vert \leq CN^{-1} \ln^2 N, \quad (x_i,y_j)\in \Omega_k^{N,N}.\]
\end{lemma}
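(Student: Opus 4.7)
The plan is to adapt the proof of Lemma \ref{singularlemma} to the right-hand exponential boundary layer, with the extra $\ln N$ factor tracked carefully to produce the bound $CN^{-1}\ln^2 N$. As in that proof, first reduce to the nontrivial case $\sigma_x = (2\epsilon^2/\alpha)\ln N < d_1/2$, since otherwise $\epsilon^{-2}\leq C\ln N$ and the classical truncation analysis applies directly. For concreteness I would work with $W_{R_1}$ on $\bar\Omega_2^{N,N}$; the case $W_{R_2}$ on $\bar\Omega_4^{N,N}$ is entirely analogous with the barrier function $G_{w_{R_2};i}$ in place of $G_{w_{R_1};i}$.

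Next, partition $\bar\Omega_2^{N,N}$ into the coarse $x$-region $\{(x_i,y_j):\, x_i \le 1-\sigma_x\}$ and the fine $x$-region $\{(x_i,y_j):\, x_i > 1-\sigma_x\}$. On the coarse region, apply Lemma \ref{dismaxprinciple} using the barrier $G_{w_{R_1};i}$ (which satisfies $L^{N,N}_\epsilon G_{w_{R_1};i} \ge 0$ by construction) to show $|W_{R_1}(x_i,y_j)| \le C G_{w_{R_1};i}$. Combined with the continuous bound $|w_{R_1}| \le C e^{-\alpha(1-x)/\epsilon^2} \le C G_{w_{R_1};i}$ from \autoref{decompostiontheorem} and the estimate \eqref{eqnb11}, this yields $|W_{R_1}-w_{R_1}|\le CN^{-1}$ on this region.

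On the fine $x$-region the truncation error needs to be handled with care: the bounds of \autoref{decompostiontheorem} give $\|w_{R_1,xx}\|\le C\epsilon^{-4}$ and $\|w_{R_1,xxx}\|\le C\epsilon^{-6}$, so the naive Taylor bound $h_i\|w_{R_1,xx}\| + \epsilon^2 h_i\|w_{R_1,xxx}\|$ on the fine mesh $h_1 = O(\epsilon^2\ln N/N)$ gives $O(\epsilon^{-2}\ln N/N)$, which is not useful. The remedy is to invoke the \emph{pointwise} bound $|\partial^{i+j} w_{R_1}/\partial x^i\partial y^j|\le C\epsilon^{-2i}\exp(-\alpha(1-x)/\epsilon^2)$ from the construction of $w_{R_1}$, and to use that $\epsilon^2\|w_{R_1,yyy}\|\le C$ with the exponential decay factor. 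After integrating the local bounds over $[x_{i-1},x_{i+1}]\times[y_{j-1},y_{j+1}]$, one obtains a weighted truncation estimate of the form
\[
  \bigl|L^{N,N}_\epsilon(W_{R_1}-w_{R_1})(x_i,y_j)\bigr| \le CN^{-1}\ln N \cdot e^{-\alpha(1-x_i)/\epsilon^2},
\]
valid on the fine region. Feed this into Lemma \ref{dismaxprinciple} with a barrier function of the form $\Phi(x_i,y_j) = CN^{-1}\ln N\,(1 + (1-x_i)/\epsilon^2)\,G_{w_{R_1};i}$ (or, equivalently, a discrete analogue of $CN^{-1}\ln N\,(1-x)/\epsilon^2\,e^{-\alpha(1-x)/\epsilon^2}$). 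Since the fine-mesh width in $x$ is $h_1 = 4\sigma_x/N$, the factor $(1-x_i)/\epsilon^2$ is bounded by $(2/\alpha)\ln N$ on the fine region, delivering the additional logarithm that produces the $\ln^2 N$ bound. Combining with the coarse-region estimate completes the proof.

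The main obstacle is the construction and verification of the barrier $\Phi$: the exponential-layer derivatives are much more singular than those of $w_B$ (where $\epsilon^2\|w_{B,xx}\|$ is $O(1)$), so unlike in Lemma \ref{singularlemma} one cannot close the argument with a bounded truncation on the fine mesh. Pushing the local pointwise bounds through the discrete operator and checking the sign condition $L^{N,N}_\epsilon\Phi \ge |L^{N,N}_\epsilon(W_{R_1}-w_{R_1})|$ on both the standard mesh and along the transition lines (where the transformed operators $L^{N,N}_{T,\epsilon}$ and $L^{N,N}_{m,\epsilon}$ act) is the delicate computation; the extra $\ln N$ versus Lemma \ref{singularlemma} is the price for this refined barrier.
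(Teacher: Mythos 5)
Your coarse-region argument ($x_i\le 1-\sigma_x$) matches the paper exactly: the barrier $G_{w_{R_1};i}$, the continuous decay bound from \autoref{decompostiontheorem}, and \eqref{eqnb11} give $CN^{-1}$ there. The divergence, and the gap, is in the fine region. You compute the truncation error as $O(\epsilon^{-2}N^{-1}\ln N)$ and then declare this ``not useful'' --- but that is precisely the bound the paper uses. The point you miss is that this bound does not need to be improved; it needs to be paired with the right barrier. The paper takes
\[
\Phi(x_i,y_j)=C\epsilon^{-2}N^{-1}\ln N\,\bigl(x_i-(1-\sigma_x)\bigr)+CN^{-1}.
\]
Applying $L^{N,N}_\epsilon$ to the linear term, the discrete diffusion vanishes and the convection term contributes $a_{i,j}\cdot C\epsilon^{-2}N^{-1}\ln N\ge \alpha C\epsilon^{-2}N^{-1}\ln N$, which dominates the truncation error; and since the fine region has width $\sigma_x\le (2\epsilon^2/\alpha)\ln N$, the barrier itself is bounded by $CN^{-1}\ln^2 N$ there. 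That is the entire source of the second logarithm.

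Your replacement route --- a weighted truncation estimate $CN^{-1}\ln N\,e^{-\alpha(1-x_i)/\epsilon^2}$ fed into a barrier of the form $CN^{-1}\ln N\,(1+(1-x_i)/\epsilon^2)\,G_{w_{R_1};i}$ --- is not carried out: you explicitly defer the verification that $L^{N,N}_\epsilon\Phi$ dominates the truncation error, and for products of a linear factor with the discrete exponential $G_{w_{R_1};i}$ that sign condition is genuinely nontrivial (the discrete operator applied to such products produces cross terms whose signs must be checked on both mesh widths and at the transition point $x_{3N/4}$). Since that verification is the heart of your fine-region argument and it is the step you leave open, the proof is not complete as written. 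The fix is simple: keep your uniform truncation bound and use the paper's linear barrier instead of the decaying one; everything else in your outline then goes through.
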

		
\begin{proof}
We detail the proof for the layer function ${w}_{R_1}$;
  the arguments for the other layer functions are essentially
  the same. Further, as before, we consider only the case
  where $\sigma_x < 0.25$.
		
  From (\ref{layereqn1}) and \autoref{decompostiontheorem} it follows that 
  \begin{equation}
    \vert{W}_{R_1}(x_i,y_j)\vert=\vert{w}_{R_1}(x_i,y_j)\vert\leq C e^{(-\alpha/\epsilon^2)(1-x_i)}\leq C G_{{w}_{R_1},i},\quad (x_i,y_j)\in \varrho^{N,N}.
  \end{equation}
  Further, for all internal grid points, from (\ref{layereqn}) and the discrete
  maximum principle, it follows that  
  \begin{equation}\label{eqnprev1}
    \vert{W}^{}_{R_1}(x_i,y_j)\vert\leq ^{}G_{{w}^{}_{R_1},i}.
  \end{equation}
  Therefore, applying \autoref{decompostiontheorem} and (\ref{eqnprev1}), we conclude that
  \[
    \vert{w}_{R_1}(x_i,y_j) - {W}_{R_1}(x_i,y_j)\vert \leq \vert{w}_{R_1}(x_i,y_j)\vert + \vert{W}_{R_1}(x_i,y_j)\vert \leq C G_{{w}_{R_1},i}
  \]
Then, from (\ref{eqnb11}), we have 
  \begin{equation}\label{seqn1}
    \vert{w}_{R_1}(x_i,y_j) - {W}_{R_1}(x_i,y_j)\vert \leq C N^{-1}, \qquad (x_i,y_j)\in \bar{\Omega}_2^{N,N}\backslash {\Omega_2^{*}}^{N,N}.
  \end{equation}
To demonstrate similar bounds for the error in the region ${\Omega_2^{*}}^{N,N} = \big\{(x_i,y_j) \,|\, 3N/4 < i < N, \, 0 < j <N/2 \big\}$, we continue in the following manner. Applying Taylor expansions, we get 
  \begin{align*}
    \vert L_{\epsilon}^{N,N} ({W}_{R_1}(x_i,y_j)-{w}_{R_1}(x_i,y_j))\vert \leq
    C  \bigg(\epsilon^2\sigma_x N^{-1} \bigg\| \dfrac{\partial^3 {w}_{R_1}}{\partial x^3} \bigg\| + \bigg\|\dfrac{\partial^2 {w}_{R_1}}{\partial x^2} \bigg\|\bigg)+C \epsilon^2N^{-1} \bigg\| \dfrac{\partial^3 {w}_{R_1}}{\partial y^3} \bigg\|.
  \end{align*}
  From \autoref{decompostiontheorem}, it follows that
  \begin{align*}
    \vert L_{\epsilon}^{N,N}[{W}_{R_1}(x_i,y_j) - {w}_{R_1}(x_i,y_j)]\vert \leq
    C\epsilon^{-2}N^{-1}\ln N.
  \end{align*}
Then one can employ the barrier function
$C \epsilon^{_2} N^{-1}\ln N(x_i-(1-\sigma_x))+CN^{-1}$,  
and Lemma~\ref{dismaxprinciple}  on $\bar{\Omega}_2^{*N,N}$, to obtain
\begin{equation}\label{lasteqn1}
  \vert{w}_{R_1}(x_i,y_j) - {W}_{R_1}(x_i,y_j)\vert \leq C N^{-1} \ln^2N, \quad (x_i,y_j) \in \bar{\Omega}_2^{*N,N}.
\end{equation}
The result follows from (\ref{seqn1}) and (\ref{lasteqn1}). 
\end{proof}

\begin{lemma}\label{lemma2}
  If ${w}^{}_{B_2 R_1}$ and ${W}_{B_2R_1}$ are the solutions of (\ref{cornerlayer}) and (\ref{cornereqn}), respectively, then 
  \begin{equation}\label{cornereqn1}
    \vert{w}_{B_2R_1}(x_i,y_j) - {W}_{B_2R_1}(x_,y_j)\vert \leq CN^{-1}\ln^2N, \qquad (x_i,y_j) \in \Omega^{N,N}.
  \end{equation}
\end{lemma}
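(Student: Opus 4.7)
I would proceed in analogy with Lemmas~\ref{singularlemma} and~\ref{singularlemma1}, adapted for the corner layer function $w_{B_2R_1}$, which decays exponentially in $x$ at rate $\alpha/\epsilon^2$ and in $y$ at rate $\beta/\epsilon$. As in the earlier lemmas, the case $\sigma_x \geq 1/4$ or $\sigma_y \geq 1/8$ reduces to classical arguments since then $\epsilon^{-2} \leq C\ln N$, so I restrict attention to $\sigma_x < 1/4$ and $\sigma_y < 1/8$. The plan is to split $\bar{\Omega}_2^{N,N}$ into the fine corner region
\[
  \bar{\Omega}_2^{**N,N} := \bigl\{(x_i,y_j) : 3N/4 < i < N,\ 0 < j < N/8\bigr\}
\]
and its complement in $\bar{\Omega}_2^{N,N}$, and to argue separately on each.

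On the complement, I would show that the tensor-product barrier $G_{w_{R_1};i}\,G_{w_{B_2};j}$ dominates both $|w_{B_2R_1}|$ (by Theorem~\ref{decompostiontheorem}) and the discrete boundary data, and that it is a discrete supersolution for $L_{\epsilon}^{N,N}$. Then Lemma~\ref{dismaxprinciple} yields $|W_{B_2R_1}| \leq C\,G_{w_{R_1};i}\,G_{w_{B_2};j}$, and combining with the exact bound and applying~\eqref{eqnb1} and~\eqref{eqnb11} gives $|w_{B_2R_1} - W_{B_2R_1}| \leq CN^{-1}$ on $\bar{\Omega}_2^{N,N} \setminus \bar{\Omega}_2^{**N,N}$.

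Within $\bar{\Omega}_2^{**N,N}$ the mesh is uniform with $h_1 \leq C\epsilon^2 N^{-1}\ln N$ and $k_1 \leq C\epsilon N^{-1}\ln N$. Standard Taylor expansions combined with the derivative bounds of Theorem~\ref{decompostiontheorem}, in particular $\|\partial_x^2 w_{B_2R_1}\| \leq C\epsilon^{-4}$, yield
\[
  \bigl|L_{\epsilon}^{N,N}(W_{B_2R_1} - w_{B_2R_1})(x_i,y_j)\bigr| \leq C\epsilon^{-2}N^{-1}\ln N,
\]
the dominant contribution coming from $h_1\,\|\partial_x^2 w_{B_2R_1}\|$. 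Mimicking Lemma~\ref{singularlemma1}, I would then employ the barrier
\[
  \Psi(x_i,y_j) := C\epsilon^{-2}N^{-1}\ln N\,(x_i - (1-\sigma_x)) + CN^{-1},
\]
which dominates the error on $\partial \bar{\Omega}_2^{**N,N}$ by the previous step and satisfies $L_{\epsilon}^{N,N}\Psi \geq C\epsilon^{-2}N^{-1}\ln N$ since $a \geq \alpha > 0$. The discrete maximum principle on $\bar{\Omega}_2^{**N,N}$ then gives $|w_{B_2R_1} - W_{B_2R_1}| \leq CN^{-1}\ln^2 N$, because $x_i-(1-\sigma_x) \leq \sigma_x \leq C\epsilon^2\ln N$ throughout the corner region.

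The main obstacle is verifying that the product $G_{w_{R_1};i}\,G_{w_{B_2};j}$ is a genuine discrete supersolution: applying $L_{\epsilon}^{N,N}$ to a tensor product generates cross interactions between the two exponential scales through the first- and second-order difference operators, and these must be handled carefully, using $b \geq \beta^2 > 0$ to absorb any residual negative contributions. A secondary subtlety is that the barrier $\Psi$ captures $x$-directional growth only; fortunately the $y$-contributions to the truncation error, scaling like $\epsilon^2 k_1^2 \|\partial_y^4 w_{B_2R_1}\| \leq C\epsilon^{-2}N^{-2}\ln^2 N$, are strictly smaller than the dominant $x$-term and are absorbed automatically.
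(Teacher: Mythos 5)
Your proposal is correct and follows essentially the same route as the paper: the same splitting into the corner region $\{3N/4 < i < N,\ 0 < j < N/8\}$ and its complement, a one-dimensional-barrier argument giving $CN^{-1}$ outside, and the truncation bound $C\epsilon^{-2}N^{-1}\ln N$ with the barrier $C\epsilon^{-2}N^{-1}\ln N\,(x_i-(1-\sigma_x))+CN^{-1}$ inside. The only cosmetic difference is that you use the product $G_{w_{R_1};i}\,G_{w_{B_2};j}$ where the paper uses $\min\{G_{w_{B_2};j},\,G_{w_{R_1};i}\}$; either suffices, since off the corner region at least one factor is already $O(N^{-1})$ by \eqref{eqnb1} or \eqref{eqnb11}.
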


\begin{proof}
   We provide the proof of (\ref{cornereqn1}) for the
  corner layer function ${W}_{B_2R_1}$ and in the value of $\sigma_x <
  1/4$.
  Using arguments like those in the proof of ~\cref{singularlemma}, we get
  \[
    \vert{W}_{B_2R_1}(x_i,y_j)\vert \leq C \min\{G_{{w}_{B_2};j},
    G_{{w}_{R_1};i}\}, \quad \text{if } (x_i,y_j) \in \varrho^{N,N},
  \]
  \[
    \vert{w}_{B_2R_1}(x_i,y_j) -  {W}_{B_2R_1}(x_i,y_j)\vert \leq
    C\min\{ G_{{w}_{B_2};j}, G_{{w}_{R_1};i}\}, \quad \text{if }
    (x_i,y_j) \in \Omega_2^{N,N}.\]
  Then applying (\ref{eqnb1}) and (\ref{eqnb11}), we deduce that
  \begin{align}\label{corner}
  	 \vert{w}_{B_2R_1}(x_i,y_j) - {W}_{B_2R_1}(x_i,y_j)\vert \leq C N^{-1}, \qquad (x_i,y_j) \in \bar{\Omega}_2^{N,N} \backslash {\Omega^*}^{N,N}_{1,2},
  \end{align}
  where, ${\Omega^*}^{N,N}_{1,2} = \{(x_i,y_j) \,|\,\, 3N/4 < i< N,\
  0<j < N/8\}$. In this region, the truncation error satisfies
  \begin{multline*}
    \big| L_{\epsilon}^{N,N} [{W}_{B_2R_1}(x_i,y_j) -
    {w}_{B_2R_1}(x_i,y_j)] \big| \\
    \leq  C\sigma_x N^{-1} \bigg(\epsilon^2 \bigg\| \dfrac{\partial^3
      {w}_{B_2R_1}}{\partial x^3} \bigg\|+ \bigg\| \dfrac{\partial^2
      {w}_{B_2R_1}}{\partial x^2} \bigg\| \bigg) 
      +C \sigma_y
    N^{-1}\epsilon^2 \bigg\|\dfrac{\partial^3 {w}_{B_2R_1}}{\partial
      y^3} \bigg\|    \\
    \leq  N^{-1} \epsilon^{-2} \ln N,
  \end{multline*}
  where we have applied \autoref{decompostiontheorem}, with the
  barrier function $C \epsilon^{-2}N^{-1}\ln  N(x_i-(1-\sigma_x))+CN^{-1}$.
  The discrete maximum principle, applied on
  $\bar{\Omega}^{*N,N}_{1,2}$,  leads to
  \begin{align}\label{corner1}
  	\vert{w}_{B_2R_1}(x_i,y_j) - {W}_{B_2R_1}(x_i,y_j)\vert \leq C
  	N^{-1} \ln^2 N.
  \end{align}
Then, the required result follows from the equations \eqref{corner} and \eqref{corner1}.
\end{proof}

As explained in the following lemma, analogous results hold for the remaining corner layer components
${w}_{D_2R_1}$, ${w}_{D_4R_2}$, ${w}_{T_2R_2}$, ${w}_{D_5B_1}$,
${w}_{D_5D_1}$, ${w}_{D_6D_3}$, and ${w}_{D_6T_1}$.
\begin{lemma}\label{corner_singularlemma}
At each grid points $(x_i,y_j)\in \Omega_k^{N,N},\,k=1,2,3,4$, the error in the approximations of the remaining corner layer components, as depicted in Figure~\ref{fig:Omega}, are bounded as 
\begin{align*}
	\vert{w'}(x_i,y_j) - {W'}(x_i,y_j)\vert \leq C N^{-1} \ln^2 N,
\end{align*}
where $W'={W}_{D_2R_1}+{W}_{D_4R_2}+{W}_{T_2R_2}+{W}_{D_5B_1}+{W}_{D_5D_1}+{W}_{D_6D_3}+{W}_{D_6T_1}$.
\end{lemma}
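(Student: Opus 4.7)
The plan is to extend the argument used in the proof of Lemma~\ref{lemma2} to each of the seven additional corner layer components individually, and then invoke the triangle inequality. I would proceed as follows.

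First, I would introduce discrete barrier functions of the same product-of-first-order-Taylor-approximation type as those preceding Lemma~\ref{singularlemma}, supplemented by analogous barriers $G_{w_{D_5};i}$ and $G_{w_{D_6};i}$ for the exponential interior layers emanating from $x=d_1$. Theorem~\ref{decompostiontheorem} furnishes pointwise bounds on each of the seven corner layer functions of the form $|w_{X,Y}(x,y)| \leq C\,\phi_X(x)\,\phi_Y(y)$, where $\phi_X,\phi_Y$ are the corresponding one-dimensional layer exponentials, and the chosen discrete barriers dominate these continuous ones on the mesh. Applying the discrete maximum principle (Lemma~\ref{dismaxprinciple}) then gives $|W_{X,Y}(x_i,y_j)| \leq C\min\{G_{w_X;\cdot},\,G_{w_Y;\cdot}\}$, and the triangle inequality yields the combined bound $|w_{X,Y}-W_{X,Y}| \leq C\min\{G_{w_X;\cdot},\,G_{w_Y;\cdot}\}$.

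Second, I would split the mesh into a narrow ``corner sub-region'' where both barrier factors remain of order unity, together with its complement. Outside the sub-region, estimates analogous to~\eqref{eqnb1} and~\eqref{eqnb11} show that at least one of the two factors is bounded by $CN^{-1}$, so $|w_{X,Y}-W_{X,Y}| \leq CN^{-1}$ without further effort. Inside the sub-region, I would Taylor-expand the truncation error and appeal to the derivative bounds from Theorem~\ref{decompostiontheorem} to obtain a bound of the form $C\epsilon^{-2}N^{-1}\ln N$ in the exponential-layer direction (and $C\epsilon^{-1}N^{-1}\ln N$ in the characteristic direction when present), and then close the estimate using a linear barrier function of the type $C\epsilon^{-2}N^{-1}\ln N \cdot (x_i-(1-\sigma_x)) + CN^{-1}$ (or the appropriate $y$-analogue, or a shifted version centred on the interior line $x=d_1$ or $y=d_2$), exactly as in the proof of Lemma~\ref{lemma2}. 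This delivers the pointwise rate $CN^{-1}\ln^2 N$ on each individual corner error.

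The principal obstacle is combinatorial bookkeeping rather than new analytical content: the seven corner layers sit at different intersections (characteristic with exponential for $w_{T_2R_2}$; characteristic with interior exponential for $w_{D_5B_1}$, $w_{D_6T_1}$; two interior characteristics for $w_{D_2R_1}$, $w_{D_4R_2}$; and interior exponential with interior characteristic for $w_{D_5D_1}$, $w_{D_6D_3}$), so each case requires its own corner sub-region and its own choice of linear barrier aligned with the nearest edge or discontinuity line. The components $w_{D_5B_1}$, $w_{D_5D_1}$, $w_{D_6D_3}$, $w_{D_6T_1}$ require slight extra care because the factors $w_{D_5},w_{D_6}$ in Theorem~\ref{decompostiontheorem} carry an additional $\epsilon$, which modifies the truncation-error constants but not the $\ln^2 N$ rate. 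Once each of the seven cases is handled in this uniform fashion, summing via the triangle inequality over the seven terms comprising $w'-W'$ produces the asserted bound.
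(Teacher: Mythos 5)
Your proposal is correct and follows essentially the same route as the paper, which states this lemma without proof on the grounds that the argument of Lemma~\ref{lemma2} (barrier functions, the splitting into a corner sub-region and its complement, truncation-error estimates via Theorem~\ref{decompostiontheorem}, and the discrete maximum principle) carries over to each of the seven remaining corner components. Your write-up simply makes explicit the case-by-case bookkeeping that the paper leaves implicit.
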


\begin{lemma}\label{dislayer1}
  If $W_{D_5}$ be the solution of problem (\ref{intlayereqn1}), then 
  \[\vert D_xW_{D_5}(d_1,y_j)\vert\leq C(1+\epsilon^{-2}N^{-1}).\]
\end{lemma}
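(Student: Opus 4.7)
The plan is to decompose the discrete derivative into a continuous part and an error part:
\[
D_x W_{D_5}(d_1, y_j) = D_x w_{D_5}(d_1, y_j) + D_x\bigl(W_{D_5} - w_{D_5}\bigr)(d_1, y_j),
\]
and bound each contribution separately. The first should yield the $C$ in the claimed bound, and the second the $C \epsilon^{-2} N^{-1}$ term.

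For the continuous piece, I would apply the mean value theorem on the fine-mesh interval $[d_1 - h_1, d_1]$,
\[
|D_x w_{D_5}(d_1, y_j)| \leq \sup_{\xi \in [d_1 - h_1, d_1]} \bigg|\dfrac{\partial w_{D_5}}{\partial x}(\xi, y_j)\bigg|,
\]
together with a sharp first-derivative estimate on the interior layer $w_{D_5}$. The latter comes from repeating the asymptotic-expansion argument already carried out in Section~\ref{sec:a priori} for $\bar{w}_{R_1}$, but centred at $x = d_1$ and restricted to $\Omega_1$: one writes $w_{D_5} = \bar{w}_{0,D_5} + \epsilon\bar{w}_{1,D_5}$ in the stretched variable $\varsigma = (d_1 - x)/\epsilon^2$, solves the resulting reduced ODEs to obtain $\bar{w}_{0,D_5}$ and $\bar{w}_{1,D_5}$ as products of polynomials in $\varsigma$ with $\exp(-a(d_1, y)\varsigma)$, and then differentiates explicitly. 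The matching condition that $u \in C^{1,1}(\bar\Omega)$ forces the leading amplitude of $w_{D_5}$ at $d_1$ to carry an extra factor of $\epsilon$ beyond what the crude bound $|w_{D_5}| \leq C\epsilon\exp(-\alpha(d_1-x)/\epsilon^2)$ reveals, so that the $\epsilon^{-2}$ scaling from differentiating the exponential cancels and one is left with $|\partial_x w_{D_5}(\xi, y)| \leq C$ uniformly in $\xi \in [d_1 - \sigma_x, d_1]$.

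For the discrete-continuous error, I would use the analogue of Lemma~\ref{singularlemma1}, specialised to the interior layer. The local truncation error on the fine mesh satisfies
\[
|L_\epsilon^{N,N}(W_{D_5} - w_{D_5})(x_i, y_j)| \leq C \sigma_x N^{-1}\bigg(\epsilon^2\bigg\|\dfrac{\partial^3 w_{D_5}}{\partial x^3}\bigg\| + \bigg\|\dfrac{\partial^2 w_{D_5}}{\partial x^2}\bigg\|\bigg) + C\epsilon^2 N^{-1}\bigg\|\dfrac{\partial^3 w_{D_5}}{\partial y^3}\bigg\|,
\]
which, combined with the derivative bounds in Theorem~\ref{decompostiontheorem} and a barrier function of the form $C_1 \epsilon^{-2}N^{-1}\ln N(x_i - (d_1 - \sigma_x)) + C_2 N^{-1}$, gives a pointwise error of size at most $C\epsilon^2 N^{-1}\ln N$ at the two adjacent fine-mesh nodes $x_i = d_1$ and $x_i = d_1 - h_1$. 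Dividing by $h_1 = 8\epsilon^2 \ln N/(\alpha N)$ from \eqref{stepsizes} then yields $|D_x(W_{D_5} - w_{D_5})(d_1, y_j)| \leq C\epsilon^{-2} N^{-1}$.

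The main obstacle is the sharp first-derivative bound on the continuous layer $w_{D_5}$, because Theorem~\ref{decompostiontheorem} only records derivative bounds starting from order two. Extracting the cancellation that produces the $O(1)$ (rather than $O(\epsilon^{-1})$) behaviour of $\partial_x w_{D_5}$ at the interface requires carefully carrying through the stretched-variable expansion of $w_{D_5}$ and matching it against the smooth component $v$ so that the jumps $[u](d_1, y) = 0$ and $[\partial_x u](d_1, y) = 0$ required by the assumed $C^{1,1}$ regularity of $u$ are preserved. Once that refined bound is available, the remainder of the argument is routine barrier-function and truncation-error analysis of the kind used throughout Section~\ref{sec:analysis}.
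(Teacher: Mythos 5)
There is a genuine gap, and it sits in your second step. You propose to bound $D_x(W_{D_5}-w_{D_5})(d_1,y_j)$ by first obtaining a pointwise error bound at the two nodes $d_1$ and $d_1-h_1$ and then dividing by $h_1$. Two things go wrong. First, the barrier function you name, $C_1\epsilon^{-2}N^{-1}\ln N\,(x_i-(d_1-\sigma_x))+C_2N^{-1}$, evaluated at $x_i=d_1$ gives $C\epsilon^{-2}N^{-1}\ln N\cdot\sigma_x + CN^{-1}=CN^{-1}\ln^2N + CN^{-1}$, not the $C\epsilon^2N^{-1}\ln N$ you claim; nothing in the argument produces a pointwise error that is smaller than the global error by a factor of $\epsilon^2$. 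Second, even with the correct pointwise bound $CN^{-1}\ln N$ (which is what \cref{dislayereq} ultimately establishes), dividing by $h_1=8\epsilon^2\ln N/(\alpha N)$ yields only $C\epsilon^{-2}$ --- weaker than the target $C\epsilon^{-2}N^{-1}$ by a factor of $N$. Estimating a discrete derivative of the error by ``pointwise error over mesh width'' is precisely the step that cannot be done naively here; it requires a dedicated $\epsilon$-weighted discrete-derivative estimate of the type $|\epsilon^2 D_x^-(Z)|\leq CN^{-1}$, and your proposal has no substitute for it. Your first step (the claim $|\partial_x w_{D_5}|\leq C$ near $d_1$) is plausible --- it is consistent with extrapolating the $\epsilon^{2-2l}$ bounds of \cref{decompostiontheorem} to $l=1$ --- but, as you note yourself, it is not recorded in the paper and would have to be proved; that is the lesser of the two issues.

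For comparison, the paper's proof never works with $w_{D_5}$ or $W_{D_5}$ directly. It exploits the decomposition $U=V+W+W'$ together with the interface coupling at $x=d_1$: it bounds the one-sided differences $D_x^{\pm}V_k(d_1,y_j)$ and $D_x^{\pm}W_{R_1}(d_1,y_j)$, and the bound on $D_xW_{D_5}(d_1,y_j)$ is inherited from these. The $\epsilon^{-2}N^{-1}$ term in the statement arises from the regular component on the coarse side of the interface, via the estimate $|\epsilon^2 D_x^-(V_k-v_k)(d_1,y_j)|\leq CN^{-1}$ quoted from \cite[Lemma~3.14]{farrell2000robust} --- exactly the discrete-derivative tool your argument is missing. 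If you want to salvage your route, you would need to prove an analogous $\epsilon$-weighted discrete-derivative error bound for $W_{D_5}-w_{D_5}$ on the fine mesh adjacent to $d_1$, rather than dividing pointwise error bounds by $h_1$.
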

\begin{proof}
  At $(x,y)=(d_1,y_j),$ let
  \[ D_x^{+}V_k(d_1,y_j)=D_x^{+}(V_k-v_k)(d_1,y_j)+D_x^{+}v_k(d_1,y_j).\]
  Since $\big\|\frac{\partial v_k}{\partial
    x}(d_1,y_j)\big\|_{\Omega_2}\leq C$, we have
  \[\vert
    D_x^{+}v_k(d_1,y_j)\vert\leq C\, \,\text{and}\,\, \vert
    D_x^{+}(V_k-v_k)(d_1,y_j)\vert=\bigg\vert\frac{(V_k-v_k)(d_1+H_2)-(V_k-v_k)(d_1)}{H_2}\bigg\vert\leq
    CN^{-1},\]
  by (\ref{regularesult}). Hence $\vert  D_x^{+}V_k(d_1,y_j)\vert\leq C(1+N^{-1}).$  
	
  Let $D_x^{-}V_k(d_1,y_j)=D_x^{-}(V_k-v_k)(d_1,y_j)+D_x^{-}v_k(d_1,y_j)$, and note that $\big\|\frac{\partial v_k}{\partial x}\big\|_{\Omega_1}\leq C$.  
As per \cite[Lemma~3.14]{farrell2000robust}, we write 
  $\vert\epsilon^2 D_x^{-}(V_k-v_k)(d_1,y_j)\vert\leq CN^{-1},$ which implies that 
  \[
  \vert D_x^{-}V_k(d_1,y_j)\vert\leq C(1+\epsilon^{-2} N^{-1}).
  \]
  Note that $\vert W_{R_1}(x_i,y_i)\vert\leq CN^{-1},\, (x_i,y_j)\in \Omega_2$, which implies that $\vert D_x^{+}W_{R_1}(d_1,y_j)\vert\leq C.$ Finally, note that $$D_x^{-}W_{R_1}(d_1,y_j)=D_x^{-}(W_{R_1}-w_{R_1})(d_1,y_j)+D_x^{-}w_{R_1}(d_1,y_j),$$
	and $\big\|\frac{\partial w_{R_1}}{\partial x}(d_1,y_j)\big\|_{\Omega_1}\leq C.$ Hence,
	$$\vert D_x^{-}W_{R_1}(d_1,y_j)\vert\leq \vert D_x^{-}(W_{R_1}-w_{R_1})(d_1,y_j)\vert+C.$$
	It is easy to show that
        \[
          \vert D_x^{-}(W_{R_1}-w_{R_1})(x_i,y_j)\vert\leq C,
        \]
        which implies that
        \[\vert D_x^{-}W_{R_1}(d_1,y_j)\vert\leq C.\]
\end{proof}
	Analogous results hold for the other interior layer component
        ${w}_{D_6}$, as exposited  in the following lemma. 

	\begin{lemma}\label{dis_singularlemma}
		At each grid points $(x_i,y_j)\in \Omega_2^{N,N}$, the error estimate of the another interior layer component $W_{D_6}$ which relate the
		regions focused in
		Figure~\ref{fig:Omega}, is as follows:
		\[\vert D_xW_{D_6}(d_1,y_j)\vert\leq C(1+\epsilon^{-2}N^{-1}).\]
	\end{lemma}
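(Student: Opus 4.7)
The plan is to mirror the proof of Lemma~\ref{dislayer1} almost verbatim, exploiting the symmetry between the two interior layer components $W_{D_5}$ and $W_{D_6}$. From Theorem~\ref{decompostiontheorem}, the continuous layer $w_{D_6}$ satisfies the same exponential decay estimate $|w_{D_6}(x,y)|\leq C\epsilon\,e^{-\alpha(d_1-x)/\epsilon^2}$ and the same derivative bounds $\|\partial^{l} w_{D_6}/\partial x^{l}\|\leq C\epsilon^{2-2l}$ as $w_{D_5}$, so every estimate in the earlier proof carries across once subscripts are changed.

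First I would split the discrete derivative at $(d_1,y_j)$ into its forward and backward one-sided variants, and for each write $D_x^{\pm}W_{D_6}(d_1,y_j) = D_x^{\pm}(W_{D_6}-w_{D_6})(d_1,y_j) + D_x^{\pm}w_{D_6}(d_1,y_j)$. For the forward (east) difference, the continuous piece is controlled by $\|\partial w_{D_6}/\partial x\|_{\Omega_2}\leq C$, and the discrete error piece is controlled by the analog of Lemma~\ref{para_singularlemma} (whose proof applies unchanged to $W_{D_6}$), giving a contribution of at most $C(1+N^{-1})$ since the east-side mesh width $H_2$ is not refined.

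For the backward (west) difference the local mesh width is $h_1 = O(\epsilon^{2} N^{-1}\ln N)$, so naively dividing an $O(N^{-1})$ error bound by $h_1$ would destroy uniformity in $\epsilon$. Following the strategy of Lemma~\ref{dislayer1}, I would instead invoke the $\epsilon^{2}$-scaled stability inequality from \cite[Lemma~3.14]{farrell2000robust}, which gives $|\epsilon^{2} D_x^{-}(W_{D_6}-w_{D_6})(d_1,y_j)|\leq CN^{-1}$; dividing through by $\epsilon^{2}$ produces the $\epsilon^{-2}N^{-1}$ term, while $\|\partial w_{D_6}/\partial x\|_{\Omega_1}\leq C$ handles the continuous piece. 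Summing the two one-sided estimates yields the claimed bound $|D_x W_{D_6}(d_1,y_j)|\leq C(1+\epsilon^{-2}N^{-1})$.

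The main obstacle is verifying that the hypotheses of \cite[Lemma~3.14]{farrell2000robust} transfer unchanged to the setting of an interior layer at $x=d_1$, rather than to a boundary layer at $x=1$ as in that reference. This requires (i) a uniform bound on $L_\epsilon^{N,N}(W_{D_6}-w_{D_6})$ after the $\epsilon^{2}$ scaling, obtainable from the derivative bounds in Theorem~\ref{decompostiontheorem}, and (ii) a suitable barrier function on the fine-mesh strip $[d_1-\sigma_x,d_1]$ adjacent to the discontinuity line, for which a multiple of the mesh function $\epsilon^{-2}N^{-1}\ln N\,(x_i-(d_1-\sigma_x))+CN^{-1}$ (analogous to the one already used in the proof of Lemma~\ref{lemma2}) is the natural candidate. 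Once this is in place, the discrete maximum principle applied on the relevant sub-mesh closes the argument.
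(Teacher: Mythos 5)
Your high-level plan is the right one: the paper gives no proof of this lemma at all, merely asserting that it is ``analogous'' to \cref{dislayer1}, so the intended argument is indeed to transpose that proof to the upper half of the domain (with $V_3,V_4$ and $W_{R_2}$ playing the roles of $V_1,V_2$ and $W_{R_1}$, and $\Omega_3,\Omega_4$ replacing $\Omega_1,\Omega_2$). However, your reconstruction of that argument is not the one the paper uses, and the route you take has a circularity. The paper's proof of \cref{dislayer1} never decomposes $D_x^{\pm}W_{D_5}$ into $D_x^{\pm}(W_{D_5}-w_{D_5})+D_x^{\pm}w_{D_5}$: it bounds $D_x^{\pm}V_k(d_1,y_j)$ and $D_x^{\pm}W_{R_1}(d_1,y_j)$ (the \emph{other} components meeting the line $x=d_1$), invoking \cite[Lemma~3.14]{farrell2000robust} only for the regular component $V_k-v_k$, and the bound on the interior-layer component is then inherited from the decomposition of $U$ and the transmission condition at $x=d_1$. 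Your version instead needs an a priori pointwise error bound for $W_{D_6}-w_{D_6}$ near $d_1$, and you attribute it to ``the analog of \cref{para_singularlemma}''. That lemma covers only $W_{D_1},\dots,W_{D_4}$, the characteristic interior layers in $y$; the exponential interior layers $W_{D_5},W_{D_6}$ are deliberately excluded from it, and their error bound is \cref{dislayereq}, which appears \emph{after} the present lemma and whose proof explicitly uses the bound $\vert D_xW_{D_5}(d_1,y_j)\vert\le C(1+\epsilon^{-2}N^{-1})$ (and its $W_{D_6}$ analogue). As written, your argument therefore assumes what is proved only downstream.

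The second difficulty, which you flag yourself but do not resolve, is the application of \cite[Lemma~3.14]{farrell2000robust} to $W_{D_6}-w_{D_6}$ on the fine strip $[d_1-\sigma_x,d_1]$. In the paper that stability estimate is applied to the regular component, for which the truncation error is uniformly $O(N^{-1})$ by \eqref{regularderieqn}; for the layer component the truncation error on the fine strip is only $O(h_1\epsilon^{-2})=O(N^{-1}\ln N)$ after the $\epsilon^2$ scaling, and the barrier function you propose would have to be checked against the transmission operator $L_{T,\epsilon}^{N,N}$ at $x_{N/2}=d_1$, not just the upwind operator. To align with the paper, restructure the proof so that you bound $D_x^{+}V_k$, $D_x^{-}V_k$, $D_x^{+}W_{R_2}$ and $D_x^{-}W_{R_2}$ at $(d_1,y_j)$ exactly as in \cref{dislayer1} (only the backward difference of the regular component contributes the $\epsilon^{-2}N^{-1}$ term), and then read off the stated bound for $D_xW_{D_6}$ from the discrete decomposition of $U$ on $\Omega_3^{N,N}\cup\Omega_4^{N,N}$.
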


\begin{lemma}
  Let $W_{D_5}$ be the solution of problem (\ref{intlayereqn1})  then,
  uniformly in $\epsilon$,
  \[
    \vert W_{D_5}(x_i,y_j)\vert\leq C
  \epsilon^2\vert[D_xW_{D_5}(d_1,y_j)]\vert.\]
\end{lemma}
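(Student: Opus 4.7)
The plan is to dominate $W_{D_5}$ by a barrier function that encodes the discrete exponential decay of an interior layer at $x=d_1$, then apply the discrete maximum principle (Lemma~\ref{dismaxprinciple}). The strategy mirrors the barrier-function constructions already used in \cref{singularlemma1,lemma2}, but now the goal is to track the size of the layer component itself, and relate it to its jump in derivative. The key structural fact I would exploit is that $L_{\epsilon}^{N,N}W_{D_5}=0$ away from the line $x=d_1$ and on the external boundary $\varrho^{N,N}$ it coincides with $w_{D_5}$, whose bound from \autoref{decompostiontheorem} is of size $C\epsilon e^{-\alpha(d_1-x)/\epsilon^2}$, i.e.\ negligible outside the layer.

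First I would introduce a discrete analogue of $\exp(-\alpha(d_1-x)/\epsilon^2)$ in the same product form used for $G_{w_{R_1};i}$ in Section~\ref{sec:analysis}, namely
\[
\Phi(x_i)=\prod_{s=i+1}^{N/2}\bigl(1+h_s\alpha/\epsilon^2\bigr)^{-1}\quad (0\leq i\leq N/2), \qquad \Phi(x_i)=0\quad (i>N/2),
\]
so that $\Phi(d_1)=1$, $\Phi$ decays monotonically across $\Omega_1$, and a direct computation shows $L_{u,\epsilon}^{N,N}\Phi\geq 0$ on $\Omega^{N,N}$. Using the explicit form of $\Phi$ together with the mesh widths $h_1$ and $H_2$ from \eqref{stepsizes}, I would show
\[
|D_x^{-}\Phi(d_1,y_j)|\geq c\,\epsilon^{-2},\qquad |D_x^{+}\Phi(d_1,y_j)|\leq C N^{-1},
\]
so that $|[D_x\Phi](d_1,y_j)|\geq c\,\epsilon^{-2}$ and consequently $\Phi(d_1)\leq C\epsilon^{2}|[D_x\Phi](d_1,y_j)|$.

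Next I would form the pair of comparison functions
\[
\Psi^{\pm}(x_i,y_j)=C\,\epsilon^{2}\bigl|[D_x W_{D_5}](d_1,y_j)\bigr|\,\Phi(x_i)\pm W_{D_5}(x_i,y_j),
\]
and verify, for sufficiently large $C$ independent of $\epsilon$ and $N$, the four hypotheses of \cref{dismaxprinciple}: nonnegativity on $\varrho^{N,N}$ (using the bound on $w_{D_5}$ from \autoref{decompostiontheorem} together with the fact that $\Phi\geq c>0$ inside the layer region); the inequalities $L_{u,\epsilon}^{N,N}\Psi^{\pm}\geq 0$, $L^{N,N}_{T,\epsilon}\Psi^{\pm}\geq 0$, $L^{N,N}_{m,\epsilon}\Psi^{\pm}\geq 0$ (which reduce to the corresponding properties of $\Phi$ because $W_{D_5}$ satisfies the homogeneous discrete equation); and the jump condition $[(\Psi^{\pm})_x](d_1,y_j)\leq 0$. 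The last item is where the scaling between $\Phi(d_1)$ and $|[D_x\Phi](d_1,y_j)|$ established above is used. An application of \cref{dismaxprinciple} then yields $\Psi^{\pm}\geq 0$ on $\bar{\Omega}^{N,N}$, which is exactly the claimed bound.

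The main obstacle I anticipate is the careful verification of the jump inequality $[(\Psi^{\pm})_x](d_1,y_j)\leq 0$: one has to make the constant $C$ absorb both the ratio $\Phi(d_1)/|[D_x\Phi](d_1,y_j)|$ and the $O(N^{-1})$ perturbation coming from the $D_x^{+}\Phi$ side, uniformly in $\epsilon$ and for both choices of the mesh regime ($\sigma_x=d_1/2$ versus $\sigma_x=2\epsilon^2\ln N/\alpha$). Everything else is a routine application of the machinery already developed in this section.
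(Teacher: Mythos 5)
Your overall strategy --- a barrier of the form $C\epsilon^2\vert[D_xW_{D_5}](d_1,y_j)\vert$ times a discrete layer profile equal to $1$ at $x=d_1$, followed by the discrete maximum principle --- is the same as the paper's (the paper takes the profile to be the solution $\psi$ of a discrete two-point problem on $\Omega_1$, extends it by the constant $1$ on $\Omega_2$, and cites Farrell et al., Chap.~3 for the details). However, your specific construction has a genuine gap: you extend the profile by $\Phi\equiv 0$ on $\Omega_2$, so $\Phi$ drops from $1$ at $x_{N/2}=d_1$ to $0$ at $x_{N/2+1}$ across the single coarse interval $H_2=4(1-\sigma_x-d_1)/N$. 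Consequently $D_x^+\Phi(d_1,y_j)=-1/H_2=O(N)$, not $O(N^{-1})$ as you claim, and, more seriously, at the interior mesh point $x_{N/2+1}$ one computes
\[
L_{u,\epsilon}^{N,N}\Phi(x_{N/2+1},y_j)=-\frac{\epsilon^2}{H_2^2}-\frac{a_{N/2+1,j}}{H_2}<0 ,
\]
while the $\pm W_{D_5}$ part contributes nothing there because $W_{D_5}$ satisfies the homogeneous discrete equation. Hence $L_{u,\epsilon}^{N,N}\Psi^{\pm}(x_{N/2+1},y_j)<0$, which violates the hypothesis of \cref{dismaxprinciple} at an interior point of $\Omega^{N,N}$, and the comparison argument does not close. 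Your claim that ``a direct computation shows $L_{u,\epsilon}^{N,N}\Phi\geq 0$ on $\Omega^{N,N}$'' is therefore false precisely at the point where the two definitions of $\Phi$ meet.

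The repair is exactly the paper's choice: take the profile identically equal to $1$ on $\Omega_2$. Then on $\Omega_2$ the barrier contributes $b_{i,j}\cdot(\text{const})\geq 0$, the operator sees no artificial kink at $x_{N/2+1}$, $D_x^{++}$ of the barrier vanishes at $x_{N/2}$, and the transmission inequality $L_{T,\epsilon}^{N,N}\Psi^{\pm}\geq 0$ is carried entirely by $D_x^{--}\Phi(d_1,y_j)\approx \alpha/\epsilon^2$, which after multiplication by $C\epsilon^{2}\vert[D_xW_{D_5}](d_1,y_j)\vert/\alpha$ dominates the contribution of $\mp W_{D_5}$ once $C$ is large. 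The parts of your argument concerning the left side of the layer --- the product-form profile on $\Omega_1$, the lower bound $\vert D_x^-\Phi(d_1,y_j)\vert\geq c\,\epsilon^{-2}$ in both mesh regimes, and the resulting relation $\Phi(d_1)\leq C\epsilon^{2}\vert[D_x\Phi](d_1,y_j)\vert$ --- are sound and correspond to what the cited one-dimensional analysis requires.
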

\begin{proof}
	Define the barrier function $\phi^\pm$ as
	\begin{align*}	\phi^\pm(x_i,y_j)=\frac{C\epsilon^2\vert{D_xW_{D_5}(d_1,y_j)}\vert}{\alpha}\begin{cases}
			\psi(x_i,y_j),\, (x_i,y_j)\in \Omega_1\\
			1,\, \qquad\quad (x_i,y_j)\in\Omega_2
		\end{cases}
		\pm W_{D_5},
	\end{align*}
	where $\psi$ is the solution of 
	\begin{align*}
		-\epsilon^2\delta_{xx}^2\psi(x_i,y_j)+\alpha D_x^{-}\psi(x_i,y_j)=0,\quad (x_i,y_j)\in \Omega_1,\\
		\psi(0,y_j)=0,\quad \psi(d_1,y_j)=1,\\
		D_x^{-}\psi(d_1,y_j),\,\, (x_i,y_j)\leq (d_1,y_j).
	\end{align*}
The proof follows from arguments in \cite[Chap. 3]{farrell2000robust}.
\end{proof}
	
\begin{lemma}\label{dislayereq}
  At each mesh point $(x_i,y_j)\in \bar{\Omega}^{N,N}$, the error of the singular component satisfies the estimates
  \begin{align*}
    \vert(W_{D_5}-w_{D_5})(x_i,y_j)\vert\leq CN^{-1}\ln N,\\
    \vert(W_{D_6}-w_{D_6})(x_i,y_j)\vert\leq CN^{-1}\ln N.
  \end{align*}
\end{lemma}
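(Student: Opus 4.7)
The plan is to follow the template established in Lemmas~\ref{singularlemma}, \ref{singularlemma1}, and \ref{lemma2}, now specialised to the interior layer functions $w_{D_5}$ and $w_{D_6}$ whose decay $e^{-\alpha(d_1-x)/\epsilon^2}$ is concentrated at the discontinuity line $x = d_1$ (rather than at the boundary $x=1$, as was the case for $w_{R_1}$). As is standard, I would reduce at once to the non-trivial regime $\sigma_x = 2\epsilon^2\alpha^{-1}\ln N < d_1/2$; otherwise $\epsilon^{-2} \leq C\ln N$ and classical uniform-mesh arguments close the estimate.

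I would then split $\bar{\Omega}_1^{N,N}$ into a coarse part and a fine-mesh strip $\bar{\Omega}_1^{\star,N,N}=\{(x_i,y_j)\,:\,d_1-\sigma_x<x_i\leq d_1\}$ adjacent to $x=d_1$. In the coarse part, $w_{D_5}$ is exponentially small by Theorem~\ref{decompostiontheorem}, and to bound $W_{D_5}$ I would introduce a barrier function
\[
  G_{w_{D_5};i}=\prod_{s=i+1}^{N/2}\bigl(1+h_s\alpha/\epsilon^2\bigr)^{-1},
\]
analogous to $G_{w_{R_1};i}$, and invoke Lemma~\ref{dislayer1} and the intermediate lemma $|W_{D_5}(x_i,y_j)|\leq C\epsilon^2|[D_xW_{D_5}(d_1,y_j)]|$ to pin down the magnitude of $W_{D_5}$ at $x_i=d_1$. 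The telescoping estimate of the form \eqref{eqnb1}--\eqref{eqnb11} then gives $G_{w_{D_5};i}\leq CN^{-1}$ at $i=N/4$, so that $|W_{D_5}-w_{D_5}|\leq CN^{-1}$ on the coarse part. The argument for $w_{D_6}$ is identical with $\Omega_3^{N,N}$ replacing $\Omega_1^{N,N}$ and Lemma~\ref{dis_singularlemma} replacing Lemma~\ref{dislayer1}.

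Inside the fine-mesh strip, I would apply Taylor expansions to the consistency error and use the sharp bounds $\|\partial_x^l w_{D_5}\|\leq C\epsilon^{2-2l}$ and $\|\partial_y^k w_{D_5}\|\leq C\epsilon^{2-2k}$ from Theorem~\ref{decompostiontheorem}, together with $h_i\leq C\epsilon^2 N^{-1}\ln N$ on this strip, to obtain
\[
  \vert L_\epsilon^{N,N}(W_{D_5}-w_{D_5})(x_i,y_j)\vert \leq C N^{-1}\ln N.
\]
Applying the discrete maximum principle (Lemma~\ref{dismaxprinciple}) on $\bar{\Omega}_1^{\star,N,N}$ with a constant barrier function $CN^{-1}\ln N$, combined with the $CN^{-1}$ bound already established on the coarse-fine interface, then yields the desired estimate $|W_{D_5}-w_{D_5}|\leq CN^{-1}\ln N$ throughout the strip.

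The main obstacle is in controlling the truncation error across the transmission line $x=d_1$, where the three-point one-sided operator $L_{T,\epsilon}^{N,N}$ activates and the jump $[D_xW_{D_5}](d_1,y_j)$ appears explicitly. This is precisely the reason for the preparatory Lemmas~\ref{dislayer1}--\ref{dis_singularlemma} and the intermediate $|W_{D_5}|$-bound: they supply the quantitative control of both $[D_xW_{D_5}](d_1,y_j)$ and $W_{D_5}$ needed to absorb the coupling of the two subdomains into the barrier-function argument. Once the bound is established for $W_{D_5}$ on $\bar{\Omega}_1^{N,N}$, the identical reasoning, with $\Omega_3$ and the corresponding mesh relabelling, delivers the analogous estimate for $W_{D_6}$, completing the proof.
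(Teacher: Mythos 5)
Your overall architecture (reduce to $\sigma_x<d_1/2$, split into a coarse region where both $w_{D_5}$ and $W_{D_5}$ are exponentially small and a fine strip where a truncation-error-plus-maximum-principle argument applies) matches the paper's, and the coarse-region and interior truncation-error estimates are fine. But there is a genuine gap at the transmission line $x=d_1$, which you name as ``the main obstacle'' and then do not actually overcome. The quantities supplied by Lemma~\ref{dislayer1} and the intermediate lemma are bounds on $|D_xW_{D_5}(d_1,y_j)|$ and on $|W_{D_5}|$ itself; what the maximum-principle argument on the strip actually needs is a bound on the \emph{error} jump $[D_x(W_{D_5}-w_{D_5})(d_1,y_j)]$, since $x_{N/2}=d_1$ is an interior node of the discrete problem where $L_{T,\epsilon}^{N,N}$ acts, not a boundary node where the error is known. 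The paper obtains $|[D_x(W_{D_5}-w_{D_5})(d_1,y_j)]|\leq C\epsilon^{-2}N^{-1}\ln N$ by starting from the identity $[\partial_x v_k](d_1,y_j)+[\partial_x w_{D_5}](d_1,y_j)=0$ and rewriting the error jump as a sum of terms already controlled: $[D_x(V_k-v_k)]$, $[D_x(W_{R_1}-w_{R_1})]$, and the one-sided consistency errors of $D_x^{\pm}$ applied to $v_k$, $w_{D_5}$ and $w_{R_1}$. Your proposal contains no analogue of this step.

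The second, related, problem is your choice of a constant barrier $CN^{-1}\ln N$ on the strip. The transmission operator annihilates constants, so $L_{T,\epsilon}^{N,N}(\mathrm{const})=0$ cannot dominate the $O(\epsilon^{-2}N^{-1}\ln N)$ jump in the discrete derivative of the error at $d_1$; nor can you instead treat $x_i=d_1$ as a boundary of the strip, since no a priori pointwise error bound is available there ($|w_{D_5}(d_1,\cdot)|$ is only $O(\epsilon)$, which is not $O(N^{-1}\ln N)$ in the regime $\sigma_x<d_1/2$). This is why the paper's barrier is $CN^{-1}\ln N\,\psi(x_i,y_j)$ on $\Omega_1$ (with $\psi$ a discrete layer function solving a one-dimensional discrete convection--diffusion problem, hence having an $O(\epsilon^{-2})$ one-sided difference at $d_1$) plus the linear term $CN^{-1}\ln N(1-x_i)$: together these produce a discrete-derivative jump of the right size and sign at $d_1$ while remaining $O(N^{-1}\ln N)$ in magnitude. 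Without both the error-jump estimate and a barrier of this form, the argument in the strip does not close.
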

\begin{proof}
Firstly, we detail the proof for the interior layer component ${w}_{D_5}$;
the arguments for ${w}_{D_6}$ are essentially the same.  
Since $\big[\frac{\partial v_k}{\partial
    x}(d_1,y_j)\big]+\big[\frac{\partial w_{D_5}}{\partial
    x}(d_1,y_j)\big]=0,$ we have that
  \begin{multline*}
    \big[D_x(W_{D_5}-w_{D_5})(d_1,y_j)\big]=[D_xW_{D_5}(d_1,y_j)]-[D_xw_{D_5}(d_1,y_j)]\\
    = \bigg[\frac{\partial v_k}{\partial x}(d_1,y_j)\bigg]-[D_xV_k(d_1,y_j)]+\bigg[\frac{\partial w_{D_5}}{\partial x}(d_1,y_j)\bigg]-[D_xW_{D_5}(d_1,y_j)]-[D_xW_{R_1}(d_1,y_j)].
  \end{multline*}
  Note that
  \begin{multline*}
    \bigg[\frac{\partial v_{k}}{\partial
      x}(d_1,y_j)\bigg]-[D_xV_k(d_1,y_j)] \\=\frac{\partial v_{k}}{\partial
      x}({d_1}^+,y_j)-D_x^+v_k(d_1,y_j)+D_x^-v_k(d_1,y_j)-\frac{\partial
      v_{k}}{\partial x}({d_1}^-,y_j)+[D_x(V_k-v_k)(d_1,y_j)].
  \end{multline*}
  Also from the proof of \cref{dislayer1}, we have the bounds
  \[\vert[D_x(V_k-v_k)(d_1,y_j)]\vert\leq C\epsilon^{-2} N^{-1},\]
  and \[\bigg\vert\bigg[\frac{\partial v_k}{\partial x}(d_1,y_j)\bigg]-[D_xv_k(d_1,y_j)]\bigg\vert\leq CN^{-1}.\]
  Hence \[\bigg\vert\bigg[\frac{\partial v_k}{\partial x}(d_1,y_j)\bigg]-[D_xV_k(d_1,y_j)]\bigg\vert\leq C\epsilon^{-2} N^{-1}.\]
  Similarly, 
  \begin{multline*}
    \bigg\vert\bigg[\frac{\partial w_{D_5}}{\partial
      x}(d_1,y_j)\bigg]-[D_xw_{D_5}(d_1,y_j)]\bigg\vert \\
    \leq \bigg\vert D_x^+W_{D_5}(d_1,y_j)-\frac{\partial w_{D_5}}{\partial x}({d_1}^+,y_j)\bigg\vert+\bigg\vert D_x^-W_{D_5}(d_1,y_j)-\frac{\partial w_{D_5}}{\partial x}({d_1}^-,y_j)\bigg\vert\\
    \leq Ch_1\bigg\|\frac{\partial^2 w_{D_5}}{\partial x^2}({d_1}^-,y_j)\bigg|\bigg|+CH_2\bigg\|\frac{\partial^2 w_{D_5}}{\partial x^2}({d_1}^+,y_j)\bigg\|.
  \end{multline*}
  Using the bounds on the derivatives of $w_{D_5}$ given in
  \autoref{decompostiontheorem}, we have  
  \[\bigg|\bigg[\frac{\partial w_{D_5}}{\partial x}(d_1,y_j)\bigg]-[D_xw_{D_5}(d_1,y_j)]\bigg|\leq CN^{-1}\ln N.\]
  Also 
  \begin{align*}
    \vert[D_xw_{R_1}(d_1,y_j)]\vert\leq C(h_1+H_2)\bigg\vert\frac{\partial^2 w_{R_1}}{\partial x^2}(d_1+H_2,y_j)\bigg\vert\leq C(h_1+H_2)\epsilon^{-4}
                                 \leq CN^{-1}.
  \end{align*}
  By the analysis given in \cite[Section 3.5]{farrell2000robust}, we have 
  \[|[D_x(W_{R_1}-w_{R_1})(d_1,y_j)]|\leq C \epsilon^{-2}N^{-1}\ln N,\]
  which implies
  \[|[D_x(W_{D_5}-w_{D_5})(d_1,y_j)]|\leq C \epsilon^{-2}N^{-1}\ln N.\]
  Using standard truncation error analysis, and the bounds on the
  derivatives of $w_{D_5}$, it can be established that
    \[
  	\big| L_{\epsilon}^{N,N}(W_{D_5}-w_{D_5})(x_i,y_j) \big| \leq
  	  \begin{cases} 
   C\bigg\|\epsilon^2\frac{\partial^2 w_{D_5}}{\partial x^2}\bigg\|+C
  	\bigg\|\frac{\partial w_{D_5}}{\partial x}\bigg\|\leq CN^{-1} & (x_i,y_j)\in \left(0,d-\sigma_x\right]\times (0,d),\\
  	Ch_1\epsilon^{-2}\leq CN^{-1}\ln N & (x_i,y_j)\in   	(d-\sigma_x,d)\times (0,d),\\
  C\bigg\|\epsilon^2\dfrac{\partial^2 w_{D_5}}{\partial x^2}\bigg\|+C \bigg\|\dfrac{\partial w_{D_5}}{\partial x}\bigg\|\leq CN^{-1}
  &	(x_i,y_j)\in (d,1-\sigma_x)\times (0,d),\\ 
Ch_2\epsilon^{-2}\leq CN^{-1}\ln N & (x_i,y_j)\in \left[1-\sigma_x,1\right)\times (0,d)
  \end{cases}
  \]
Combining all of these gives
        \[L_\epsilon^{N,N}(W_{D_5}-w_{D_5})\leq CN^{-1}\ln N\, \quad  \text{and} \quad  \, \vert[D_x(W_{D_5}-w_{D_5})(d,y_j)]\vert\leq C\epsilon^{-2}N^{-1}\ln N.\]
        
        To conclude, choose the barrier function 
        \[\phi(x_i,y_j)=CN^{-1}\ln N
          \begin{cases}
            \psi(x_i,y_j),\quad (x_i,y_j)\in \Omega_1\\
            1   ,\,\,\quad\qquad \quad (x_i,y_j)\in \Omega_2
          \end{cases} + CN^{-1}\ln N (1-x_i),\]
      
        where $\psi$ is the solution of the problem 
        \begin{align*}
          -\epsilon^2\delta_{xx}^2\psi(x_i,y_j)+\alpha D_x^{-}\psi(x_i,y_j)=0,\quad (x_i,y_j)\in \Omega_1,\\
          \psi(0,y_j)=0,\quad \psi(d,y_j)=1.
        \end{align*}
    
        Applying discrete maximum principle, we get the required
        result. Following the same steps, we can established the
        desired result for $W_{D_6}$.
	\end{proof}

 The discrete solution $U(x_i,y_j)$ of (\ref{diseqn}) can be written as,
 \begin{align*}
   U(x_i,y_j)= 
   \begin{cases}
     (V_1 + {W}_{B_1} + {W}_{D_1}+{W}_{D_5}+{W}_{D_5B_1}+{W}_{D_5D_1} )(x_i,y_j)      & \forall (x_i,y_j) \in \Omega_1^{{N,N}},\\
     (V_2 + {W}_{B_2} + {W}_{R_1} + {W}_{D_2}+ {W}_{B_2R_1} + {W}_{D_2R_1})(x_i,y_j)  & \forall (x_i,y_j) \in \Omega_2^{{N,N}},\\
     (V_3  + {W}_{D_3}+{W}_{D_6}+ {W}_{T_1}+{W}_{D_6D_3}+{W}_{D_6T_1} )(x_i,y_j)      & \forall (x_i,y_j) \in \Omega_3^{{N,N}},\\
     (V_4 + {W}_{T_2} + {W}_{D_4} + {W}_{R_2} + {W}_{D_4R_2} + {W}_{T_2R_2}) (x_i,y_j) & \forall  (x_i,y_j) \in \Omega_4^{{N,N}},\\
     (V + W + W') (d^+,y_j) = (V + W + W')(d^-,y_j),\\
     (V + W + W') (x_i,d^+) = (V + W + W')(x_i,d^-).
   \end{cases}
 \end{align*}
 where,
\[
   V = \sum\limits_{i=1}^{4}V_{i}, \qquad
   W =
   \sum\limits_{j=1}^{2}({W}_{B_J}+{W}_{T_j}+{W}_{R_j})+\sum\limits_{k=1}^{6}{W}_{D_k},\]
 and
 \[
 W'
 ={W}_{B_2R_1}+{W}_{D_2R_1}+{W}_{D_4R_2}+{W}_{T_2R_2}+{W}_{D_5B_1}+{W}_{D_5D_1}+{W}_{D_6D_3}+{W}_{D_6T_1}.
 \] 
 
 \begin{theorem}\label{thm:convergence}
   Let $u$ be the solution to problem \eqref{eqncon} and $U$ be the
   numerical solution of (\ref{diseqn}) defined on the
   piecewise-uniform Shishkin mesh.  Then the error at the mesh points
   satisfies 
   \[
     \|{U}(x_i,y_j)-u(x_i,y_j)\| \leq C N^{-1} \ln^2 N , \quad \forall (x_i,y_j) \in \bar{\Omega}^{N,N}.
   \]
 \end{theorem}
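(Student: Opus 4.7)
The plan is to prove the theorem by exploiting the parallel decompositions of $u$ and $U$ that have been constructed, and then combining the componentwise error bounds already established via the triangle inequality. Concretely, on each subdomain $\Omega_k^{N,N}$ we write
\[
U(x_i,y_j)-u(x_i,y_j)
= \bigl(V_k-v_k\bigr)(x_i,y_j)
+ \sum_{\text{edge}}\bigl(W_\bullet - w_\bullet\bigr)(x_i,y_j)
+ \sum_{\text{corner}}\bigl(W_\bullet - w_\bullet\bigr)(x_i,y_j),
\]
where the sums run over exactly those edge and corner layer terms that appear in the expression of $u$ on $\Omega_k$ given at the end of \autoref{sec:a priori}, and similarly for $U$. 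By construction, both decompositions match the correct boundary data on $\varrho^{N,N}$ and satisfy the required interface conditions across $x=d_1$ and $y=d_2$, so the sum on the right telescopes and the identity is exact at every mesh point in $\bar\Omega^{N,N}$.

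Next, I would invoke the component bounds in the order corresponding to their size. The regular part contributes $|V_k-v_k|\le CN^{-1}$ by \autoref{regdiscrete}. The bottom/top characteristic edge components, together with the inner characteristic layer components $w_{D_1},\dots,w_{D_4}$ and the interior components $w_{D_5},w_{D_6}$, contribute at most $CN^{-1}\ln N$ by \autoref{singularlemma}, \autoref{para_singularlemma}, and \autoref{dislayereq}. The right-edge exponential components $W_{R_m}-w_{R_m}$ contribute $CN^{-1}\ln^2 N$ by \autoref{singularlemma1}. Finally, the corner components $W_{B_2R_1}-w_{B_2R_1}$ and the seven analogous terms covered by \autoref{lemma2} and \autoref{corner_singularlemma} contribute $CN^{-1}\ln^2 N$. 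Taking the maximum over all these bounds and over the finitely many quadrants gives the stated global estimate.

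The main obstacle is ensuring that this triangle-inequality argument is legitimate across the discontinuity lines $x=d_1$ and $y=d_2$, where $L_{\epsilon*}^{N,N}$ is assembled from three different stencils ($L_{u,\epsilon}^{N,N}$, $L_{T,\epsilon}^{N,N}$ and $L_{m,\epsilon}^{N,N}$). I would address this by observing that each component problem for $W_\bullet$ was posed precisely so that its interface jumps match those of the corresponding $w_\bullet$, so $U-u$ and its discrete derivatives along the interfaces have vanishing jumps up to terms of the controlled sizes above. This is exactly what the discrete maximum principle of \autoref{dismaxprinciple} and the discrete stability of \autoref{disstaresult} require, and those were engineered in \eqref{diseqn1} precisely so that monotonicity holds even after the $L_T$ transformation near $x=d_1$. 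Provided the componentwise errors already absorb the jump contributions (which they do, as the proofs of \autoref{dislayer1}--\autoref{dislayereq} were devoted to quantifying $[D_x(W_\bullet-w_\bullet)](d_1,y_j)$), the triangle inequality closes without further loss, and the final bound $CN^{-1}\ln^2N$ is driven by the exponential-layer and corner-layer terms.
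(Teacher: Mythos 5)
Your argument for the interior mesh points is exactly the paper's: decompose $U-u$ componentwise on each quadrant and sum the bounds from \autoref{regdiscrete}, \autoref{singularlemma}, \autoref{para_singularlemma}, \autoref{singularlemma1}, \autoref{lemma2}, \autoref{corner_singularlemma} and \autoref{dislayereq}, with the $N^{-1}\ln^2N$ rate driven by the right-edge and corner terms. That part is fine and matches the paper.

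There is, however, a genuine gap at the mesh points lying \emph{on} the interface lines $x_{N/2}=d_1$ and $y_{N/2}=d_2$. The component lemmas you cite deliver bounds on the quadrant meshes $\Omega_k^{N,N}$, i.e.\ away from the interfaces, and the paper's own proof explicitly flags that the result must still be verified on $\bar{\Omega}^{N,N}\setminus\Omega^{N,N}$. At those nodes the scheme does not use $L_{u,\epsilon}^{N,N}$ but the transmission stencil $L_{T,\epsilon}^{N,N}$ (along $x=d_1$) and the midpoint-upwind stencil $L_{m,\epsilon}^{N,N}$ (along $y=d_2$), and what is needed there is a \emph{consistency} estimate for these stencils against the exact solution, not a monotonicity or jump-matching statement. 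The paper supplies this: for example, at $(x_i,y_{N/2})$ it computes
\[
  L_{m,\epsilon}^{N,N}\bigl(\hat u-\hat U\bigr)(x_i,y_{N/2})
  =\bigl(L_{\epsilon}-L_{m,\epsilon}^{N,N}\bigr)\hat u(x_i,y_{N/2})
  \leq CN^{-1}\ln N,
\]
using the derivative bounds of \autoref{decompostiontheorem}, and then applies the discrete maximum principle (\autoref{dismaxprinciple}); an analogous truncation-error computation is done for $L_{T,\epsilon}^{N,N}$ along $x=d_1$. Your paragraph on ``matching jumps'' explains why the discrete decomposition is well posed and why the monotone reformulation \eqref{diseqn1} is legitimate, but it never produces a bound for $|U-u|$ at the interface nodes themselves; the quantities $[D_x(W_\bullet-w_\bullet)](d_1,y_j)$ estimated in \autoref{dislayer1}--\autoref{dislayereq} enter the proofs of the component bounds, they are not a substitute for the consistency analysis of $L_{T,\epsilon}^{N,N}$ and $L_{m,\epsilon}^{N,N}$. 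You need to add that final step to close the proof.
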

 \begin{proof}
   Given the  bounds in (\ref{regularesult}) and Lemmas
   \ref{singularlemma}, \ref{para_singularlemma}, \ref{singularlemma1} \ref{lemma2}, \ref{corner_singularlemma}, and  \ref{dislayereq}, we deduce   
   the  uniform convergence result
\begin{align}\label{result_1}
  \vert U(x_i,y_j)-u(x_i,y_j)\vert \leq C N^{-1} \ln^2 N, \quad \forall (x_i,y_j) \in {\Omega}^{N,N}.
\end{align}
It remains to verify the result at the mesh points in
$\bar{\Omega}^{N,N} / {\Omega}^{N,N}$. Starting with $(x_i,y_{N/2})$,
denote ${e}(x_{i},y_{N/2}) =
{u}(x_{i},y_{N/2})-{U}({x}_{i},y_{N/2})$. Then
\begin{align*}
  L_{m,\epsilon}^{N,N} \hat{e}(x_{i},y_{N/2}) &= L_{m,\epsilon}^{N,N}\left(\hat{u}(x_{i},y_{N/2})-\hat{U}({x}_{i},y_{N/2})\right)\\[0.3cm]
                                              &= [f](x_{i},y_{N/2}) - L_{m,\epsilon}^{N,N} \hat{U}({x}_{i},y_{N/2}) \\[0.3cm]
                                              &=  (L_{\epsilon} - L_{m,\epsilon}^{N,N}) \hat{u}({x}_{i},y_{N/2})\\[0.3cm]
                                              &=  -\epsilon^2 \left( \left( \dfrac{\partial^{2}}{\partial x^{2}}-{\delta}_{xx}^{2}\right) + \left(\dfrac{\partial^{2}}{\partial y^{2}}-{\delta}_{yy}^{2}\right)\right) \hat{u}({x}_{i},y_{N/2})+\bigg(\frac{\partial}{\partial x}-D_x^- \bigg)\hat{u}({x}_{i},y_{N/2})\\[0.3cm]
                                              &\leq C{N^{-1}\ln N},
\end{align*}
where we have applied \autoref{decompostiontheorem}. The discrete
maximum principle now implies
\begin{align}\label{result_2}
\vert u(x_i,y_{N/2}) - U(x_i,y_{N/2})\vert \leq C N^{-1}\ln N.
\end{align}
One can easily show that the same bounds hold at $(x_{N/2},y_{j})=(d_1,y_j)$.
Hence, from the bounds (\ref{result_1}) and (\ref{result_2}), we get the required result.
\end{proof}

\section{Numerical Experiments}
\label{sec:numerics}  
We now present the results of numerical experiments that support the above theoretical results, based on two examples. 
Computations were performed in MATLAB (R2023b), and linear systems were resolved using its direct solver (which defaults to UMFPACK~\cite{Davi04}).

\begin{example}[Constant coefficients]\label{ex1}
Find $u=u(x,y)$ that satisfies
\[
 -\epsilon^2 \big(u_{xx}(x,y) + u_{yy}(x,y)\big) +2u_x(x,y)+25 u(x,y) = f(x,y), \quad \text{ on } \quad \Omega:=(0,1)^2,
 \]
 subject to the boundary conditions $u(x,y)|_{\partial \Omega} = 0$. For the right-hand side we take $d_1=d_2=1/2$, and
\[
f_1(x,y)=0.5, \qquad 
f_2(x,y)=0.6, \qquad 
f_3(x,y)=-0.6, \qquad 
f_4(x,y)=-0.5,
\]
where $f_k$ denotes the restriction of $f$ to $\Omega_k$ as defined in~\eqref{eq:subdomains}.
\end{example}
A plot of a typical solution to this problem is shown in \Cref{fig:examples1}. From this it is clear that there are boundary layers present near each of of the boundaries at $x=1$, $y=0$, and $y=1$, as well as the interior layers along the lines $x=1/2$ and $y=1/2$. The contour plot (\Cref{fig:eg1 contour}) highlights that there are two distinct layers regions in the $x$ coordinate, and four (much wider ones) in the $y$-coordinate.
\begin{figure}[ht]
\centering
\subfigure[Surface plot of solution]{\includegraphics[width=0.45\textwidth]{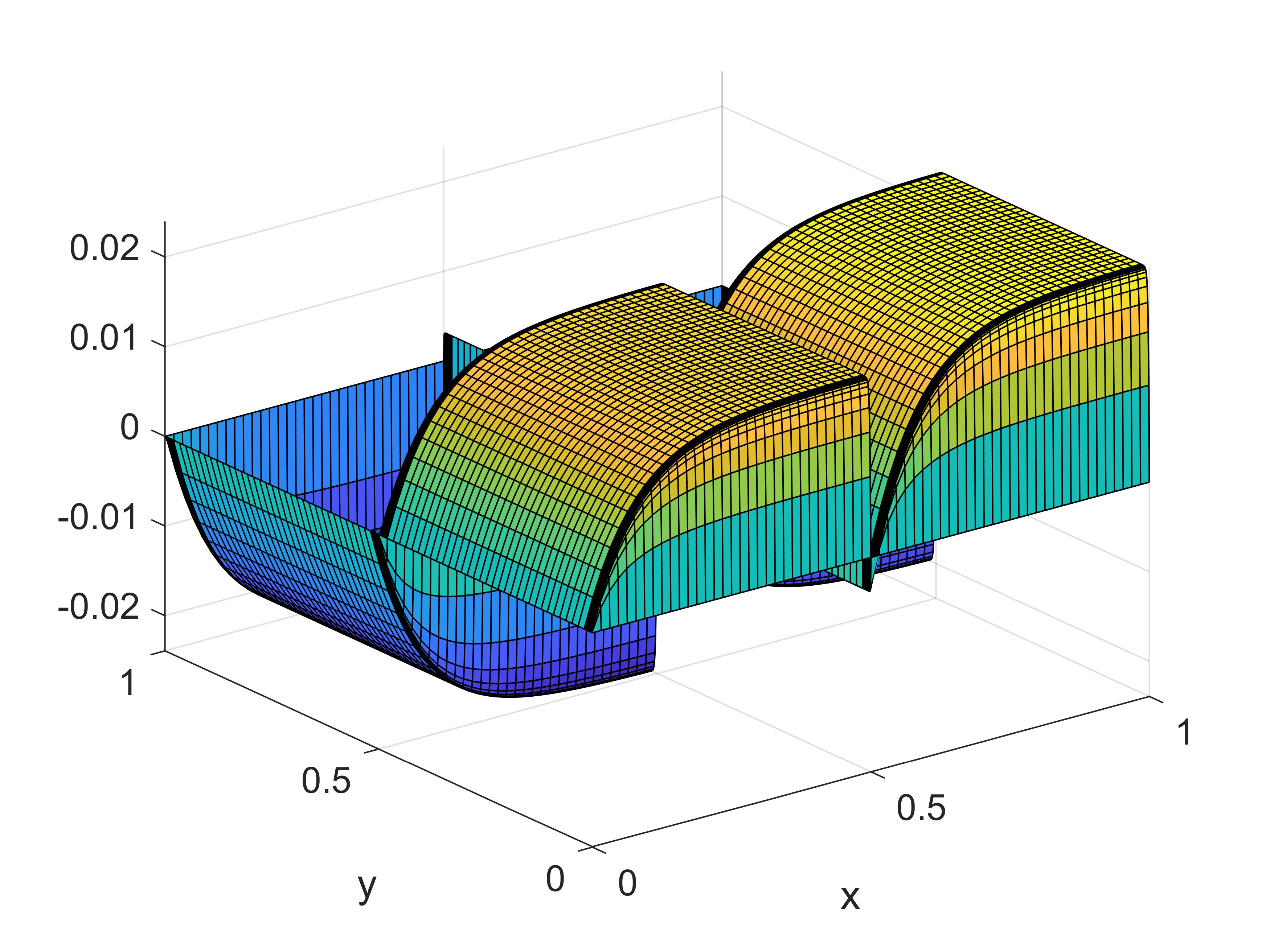}\label{fig:eg1 contour}}
\subfigure[Contour plot of solution]{\includegraphics[width=0.45\textwidth]{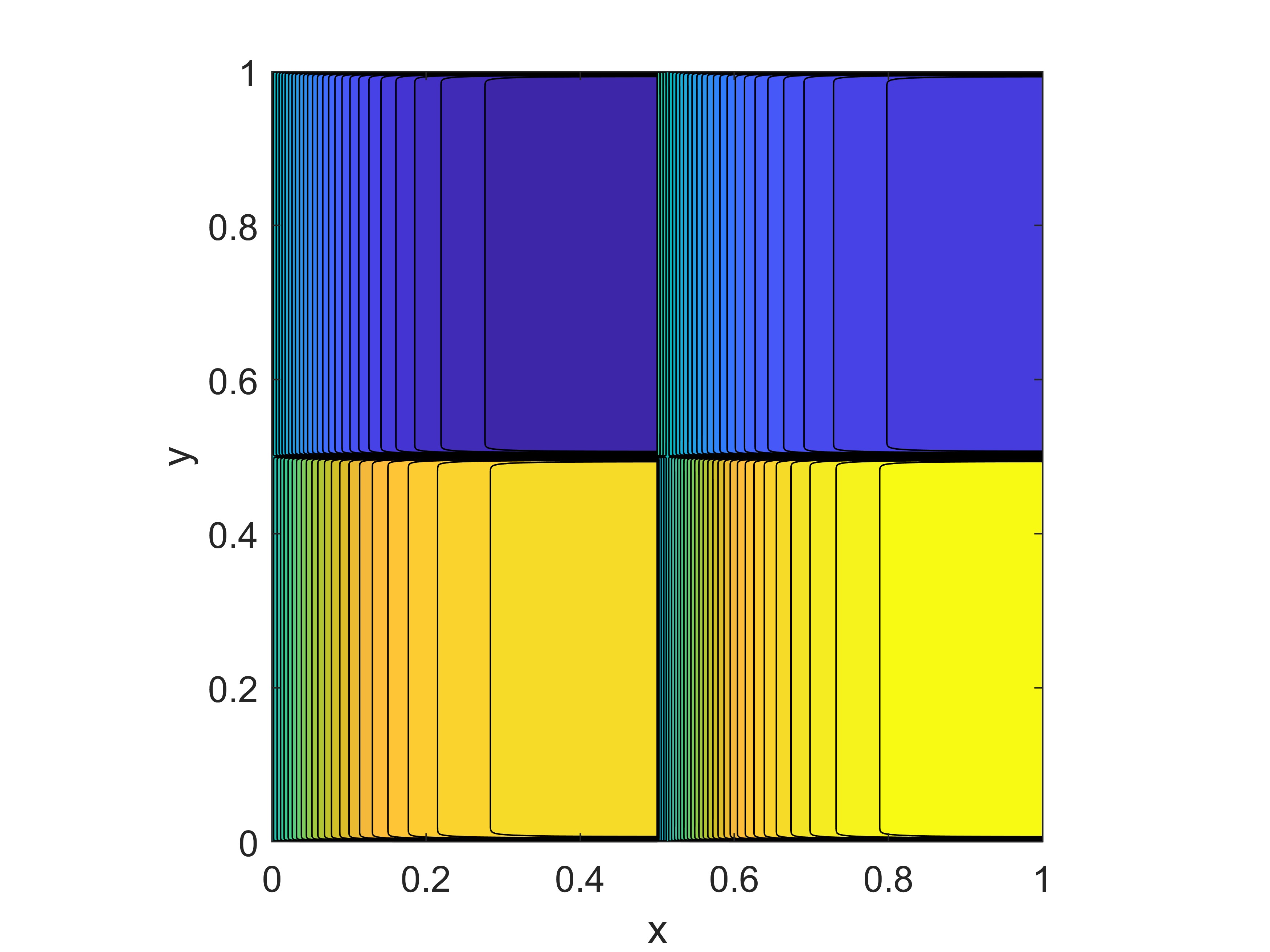}}
\caption{Surface and contour plots of numerical solution to \Cref{ex1} with $\epsilon = 10^{-2}$ and $N=128$} 
\label{fig:examples1}
\end{figure}

\begin{example}[Variable coefficients]\label{ex2}
We solve 
\[
-\epsilon^2 \big(u_{xx}(x,y) + u_{yy}(x,y)\big)+ (4+x)u_x(x,y)+(25+xy/2)u(x,y) = f(x,y),
\] 
again with homogeneous Dirichlet boundary conditions. This time we take the discontinuities at $d_1=0.4$, and $d_2=0.6$, and we set
\begin{align*}
f_1(x,y)&=(1+x+y), & 
f_2(x,y)&=-(1+x^2y^2), \\
f_3(x,y)&=-(1+xy), & 
f_4(x,y)&=(1+x+y).
\end{align*}
\end{example}
\cref{fig:examples2} show the surface and contour graphs of the numerical solution to \cref{ex1} for $\epsilon = 10^{-2}$ and $N=128$; note that for this example there are layers near $x=0.4$ and $y=0.6$.
\begin{figure}[ht]
\centering
\subfigure[Surface plot of solution]{\includegraphics[width=0.45\textwidth]{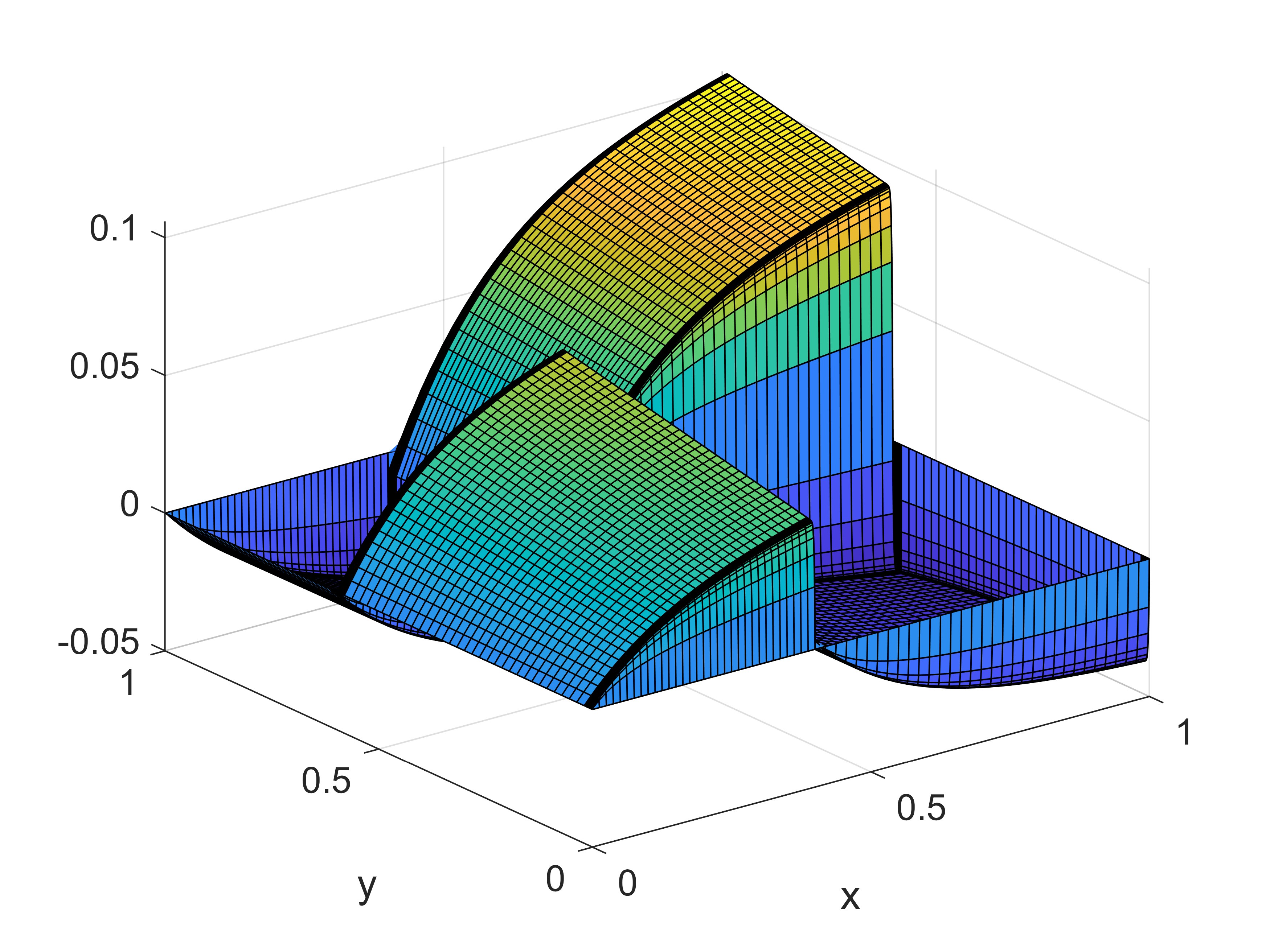}}
\subfigure[Contour plot of solution]{\includegraphics[width=0.45\textwidth]{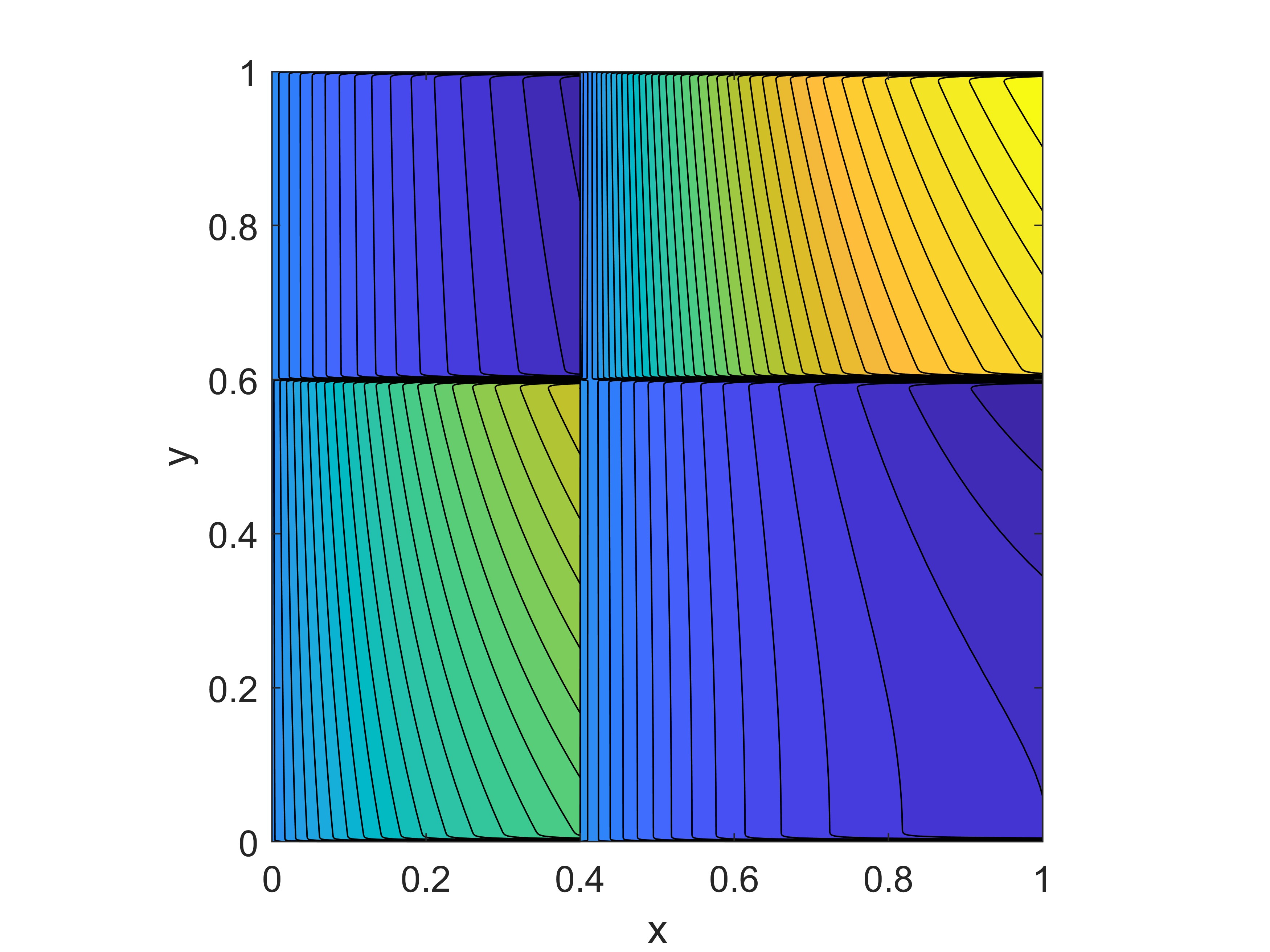}}
\caption{Surface graph and contour plots of numerical solution to \Cref{ex2}  with $\epsilon = 10^{-2}$ and $N=128$} 
\label{fig:examples2}
\end{figure}

For both these examples, the exact solution is unavailable.  Therefore, we use the double mesh principle to estimate the maximum point-wise error~\cite{farrell2000robust}.  That is, we set
\[ 
D^{N,N}_{\epsilon} = \displaystyle \max_{(x_i,y_j)\in \bar{\Omega}^{N,N}}  |\bar{U}^{2N,2N} (x_{2i},y_{2j})- U^{N,N} (x_i,y_j)|, \] 
where $\bar{U}^{2N,2N} (x_{2i},y_{2j})$ is the numerical solution on a $2N$ mesh intervals.  
The parameter uniform maximum point-wise error and estimated order of convergence are calculated, respectively, as 
\[ 
D^{N,N} = \max_{\epsilon} D^{N,N}_{\epsilon}, 
\quad \text{ and } \quad
E^{N,N}_{\epsilon} = \log_2\bigg(\frac{D^{N,N}}{D^{2N,2N}}\bigg).
\]
		
Results for \cref{ex1}, solved for a range of values of $N$ and $\epsilon$, are shown in Table~\ref{table1}. In these calculations, we have taken $\alpha=2$ and $\beta=5$.
As predicted by the theory, the error is independent of $\varepsilon$, and is first-order convergent in $N$.  This is a higher rate of convergence 
than predicted in \cref{thm:convergence}, and may be due to a simplification introduced by the problem featuring only constant coefficients.

\begin{table}[ht]
\caption{Maximum point-wise errors $D^{N,N}$ and orders of convergence $E^{N,N}$ for Example \ref{ex1}} 
\label{table1}
\centering
\begin{tabular}[h]{c|cccccc} 
\hline 
$N$/$\epsilon$ &  32 & 64 & 128 & 256 & 512  & 1024\\ [0.5ex] \hline  
1.00e-01 &  6.145e-03 &  3.221e-03 &  1.630e-03 &  8.111e-04 &  4.024e-04 &  2.022e-04 \\
1.00e-02 &  6.797e-03 &  3.667e-03 &  1.908e-03 &  9.740e-04 &  4.921e-04 &  2.480e-04 \\
1.00e-03 &  6.806e-03 &  3.672e-03 &  1.911e-03 &  9.758e-04 &  4.930e-04 &  2.485e-04 \\
1.00e-04 &  6.807e-03 &  3.672e-03 &  1.912e-03 &  9.758e-04 &  4.931e-04 &  2.485e-04 \\
1.00e-05 &  6.807e-03 &  3.672e-03 &  1.912e-03 &  9.758e-04 &  4.931e-04 &  2.485e-04 \\
1.00e-06 &  6.807e-03 &  3.672e-03 &  1.912e-03 &  9.758e-04 &  4.931e-04 &  2.485e-04 \\ \hline
$D^{N,N}$&  6.807e-03 &  3.672e-03 &  1.912e-03 &  9.758e-04 &  4.931e-04 &  2.485e-04 \\ \hline
$E^{N,N}$&      0.890 &      0.942 &      0.970 &      0.985 &      0.988 & ---
  \\\hline
\end{tabular}
\end{table}

Results for \Cref{ex2} are shown in \autoref{table2}. As before, we have taken $\alpha=2$ and $\beta=5$ when constructing the Shishkin mesh.
Again we observe $\epsilon$-uniform convergence, although in this case we note that the rate of convergence is almost first-order, which is in agreement with the theory, and not fully first-order, as was observed for \cref{ex1}. To investigate this further, in \cref{fig:errors} we plot the estimated errors for both examples, with $\epsilon=10^{-2}$ and $N=128$. In \cref{fig:error1}, we see that, for \Cref{ex1}, the maximum error appears 
to associated with the terms we denote $W_{D_5}$ and $W_{D_6}$, whereas for \cref{ex2}, the maximum error is associated with $W_{R_1}$ and 
$W_{R_2}$. We speculate that, in the case of constant coefficients, sharper bounds may be possible, but are of little interest.
It is of more significance, we believe, that \cref{fig:errors} demonstrates that the layers are well-resolved.

\begin{table}[ht]
\caption{Maximum point-wise errors $D^{N,N}$ and orders of convergence $E^{N,N}$ for Example \ref{ex2}}\label{table2}
\centering
\begin{tabular}[h]{c|cccccc} 
\hline 
    	$N$/$\epsilon$ &  32 & 64 & 128 & 256 & 512 & 1024 \\ [0.5ex] \hline  
$10^{-1}$ &  1.198e-02 &  7.393e-03 &  5.054e-03 &  3.310e-03 &  2.033e-03 &  1.200e-03 \\
$10^{-2}$ &  1.292e-02 &  7.748e-03 &  5.326e-03 &  3.482e-03 &  2.137e-03 &  1.259e-03 \\
$10^{-3}$ &  1.298e-02 &  7.770e-03 &  5.345e-03 &  3.496e-03 &  2.146e-03 &  1.264e-03 \\
$10^{-4}$ &  1.298e-02 &  7.772e-03 &  5.347e-03 &  3.498e-03 &  2.147e-03 &  1.265e-03 \\
$10^{-5}$ &  1.298e-02 &  7.772e-03 &  5.347e-03 &  3.498e-03 &  2.147e-03 &  1.265e-03 \\
$10^{-6}$ &  1.298e-02 &  7.771e-03 &  5.347e-03 &  3.498e-03 &  2.147e-03 &  1.265e-03   \\   \hline
$D^{N,N}$ &  1.298e-02 &  7.772e-03 &  5.347e-03 &  3.498e-03 &  2.147e-03 &  1.265e-03   \\   \hline
$E^{N,N}$ & 0.740 &      0.540 &      0.612 &      0.704 &      0.763 & --- \\ 	\hline
\end{tabular}
\end{table}
\begin{figure}[ht]
\centering
\subfigure[\Cref{ex1}]{\includegraphics[width=0.45\textwidth]{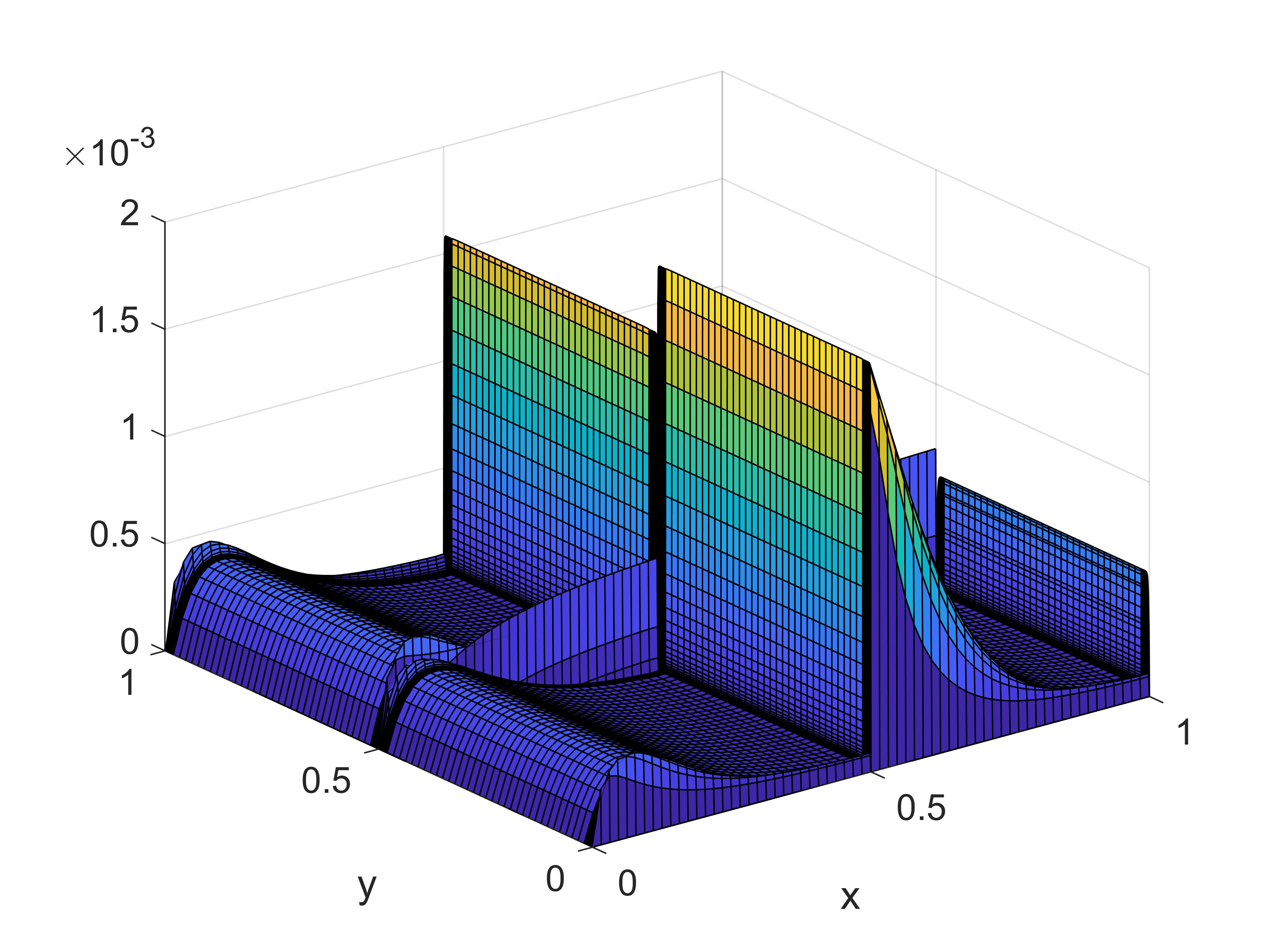}\label{fig:error1}}
\subfigure[\Cref{ex2}]{\includegraphics[width=0.45\textwidth]{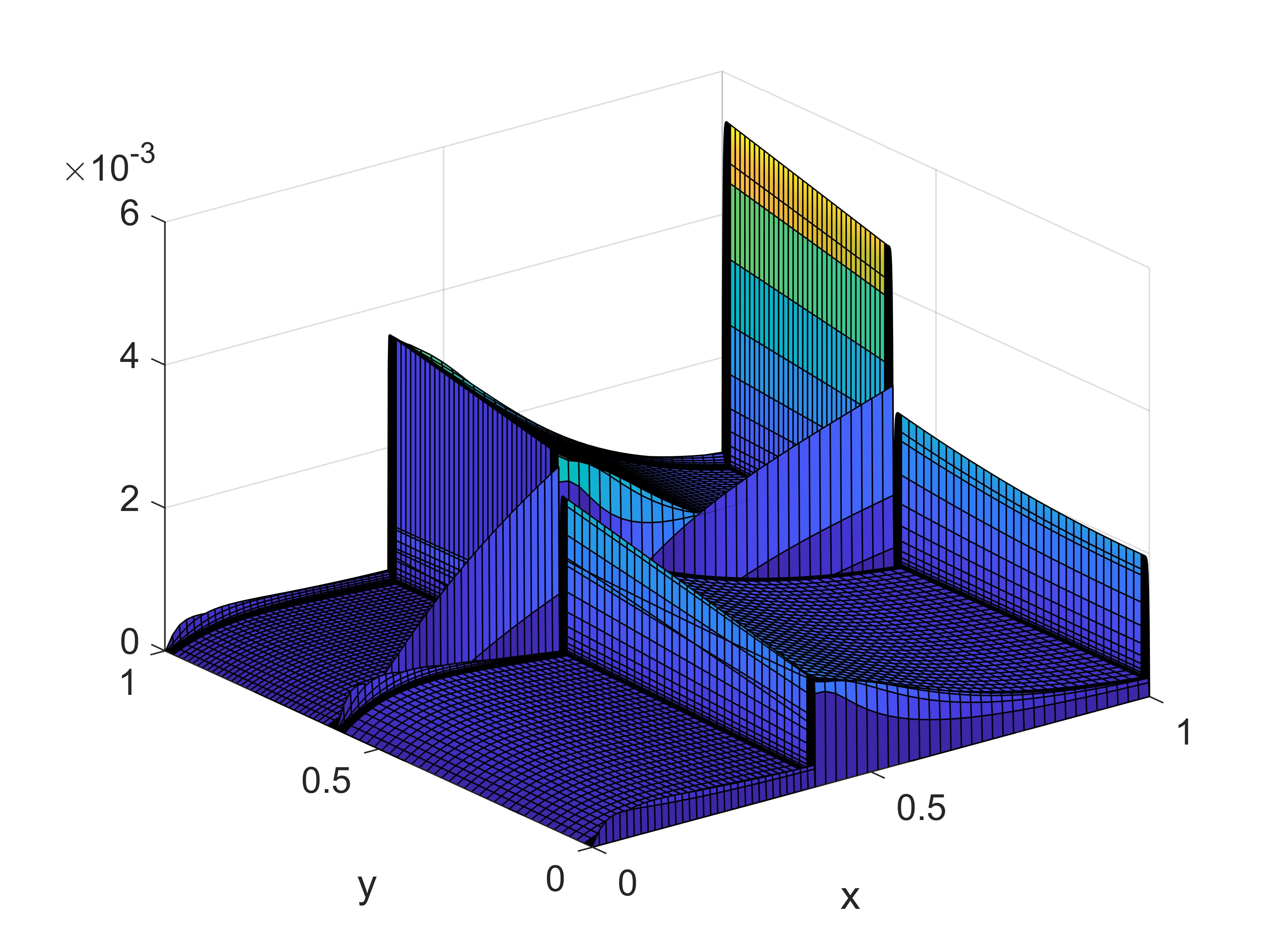}\label{fig:error2}}
\caption{Error graphs of numerical solution with $\epsilon = 10^{-2}$ and $N=128$} 
\label{fig:errors}
\label{fig:examples}
\end{figure}

\section{Conclusions}\label{sec:conclude}
We have provided the numerical analysis of a  parameter-uniform approximation technique  for robustly solving a two-dimensional singularly perturbed
convection-diffusion problem with boundary aligned flow, and with non-smooth data. 
Specifically, we have addressed the challenging issue of solving a problem where the solution exhibits two different type of interior layer (exponential and characteristic, which are of width 
$\mathcal{O}(\epsilon \ln \epsilon^{-1})$ and 
$\mathcal{O}(\epsilon^{-1/2} \ln \epsilon^{-1/2})$, respectively) 
and which interact with each other. This has involved obtaining sharp bounds for the solution decomposition, and the careful analysis of the method with respect to the many components involved. To achieve (almost) first-order convergence, we have use  a five-point difference scheme in combination with a mid-point scheme.  The resulting scheme is shown, in theory and practice, to be robust.

Of course there remain numerous open challenges, not least of which is when discontinuities in the boundary data, coupled with convective terms are not aligned with the domain's boundary, lead to interior layers that are no as easily resolved by tensor-product grid (see, \cite[Example 4.8]{Stynes18}, \cite[Example 3.2]{MaSt97} and more recently, \cite{FEM2023}). In addition, these scheme presented here is only (almost) first-order accurate, so it would be interesting to see how improve the approach so achieve either (fully) first-order, or higher-order convergence could be achieved.
This has been done for a related problem, but simpler problem featuring only exponential-type layers: see \cite{RaShVA23} for a fully first-order scheme on a graded mesh, and \cite{RaShPr2023} for a almost second-order scheme. There are subtle challenges involved in the error analysis for characteristic layers (see, e.g., \cite[\S5.3]{Stynes18}. Nonetheless, these studies  may prove a useful starting point in developing higher-order methods for this, more complicated problem.

Finally, we mentioned at the start of \S\ref{sec:numerics} that, for the examples reported, all the linear systems were solved using a direct solver. Since such solvers do not scale well, it would be preferable to use an properly preconditioned iterative solver. The recent work of Nhan et al.~\cite{MaMa22} describes an multigrid-based solver for a FDM discretization of \eqref{eq:full problem}, but with continuous data. That could be adapted to be applied to the method presented here.

\bibliographystyle{elsarticle-num} 
\bibliography{References_3}
		
\end{document}